\documentclass[11pt,a4paper]{amsart}
\title[Non-contractible orbits on Riemann surfaces]{Non-contractible orbits  for Hamiltonian functions on Riemann surfaces}
\author{Hiroyuki Ishiguro}

\usepackage[dvipdfmx]{graphicx,color}
\usepackage[abs]{overpic} 
\usepackage{amsmath}
\usepackage{mathtools}
\usepackage{amsthm}
\usepackage{enumerate}
\usepackage[all]{xy}
\usepackage{here}

\usepackage{amsfonts}




\newcommand{\Rbb}{\mathbb{R}}

\newcommand{\Tbb}{\mathbb{T}}
\newcommand{\Hcal}{\mathcal{H}}

\renewcommand{\phi}{\varphi}
\renewcommand{\epsilon}{\varepsilon}
\renewcommand{\Psi}{\varPsi}
\renewcommand{\Phi}{\varPhi}
\newcommand{\im}{{\rm{Im}}}
\renewcommand{\ker}{{\rm{Ker}}}
\renewcommand{\mid}{\ \left|\right. }

\newcommand{\sub}{\subset}

\newcommand{\Zbb}{\mathbb{Z}}
\DeclareMathOperator{\supp}{supp}
\DeclareMathOperator{\Area}{Area}

\DeclareMathOperator{\Fix}{Fix}
\newcommand{\Cinf}{{C^\infty}}
\newcommand{\Cinfc}{{C^\infty_c}}
\newcommand{\meas}{{\mathfrak{M}}}

\DeclareMathOperator{\Spec}{Spec}

\newcommand{\HF}[1]{{HF^{[a,b)}}(#1;\alpha)}
\newcommand{\HFind}[2]{{HF^{[a,b)}_{#1}}(#2;\tilde{\alpha})}
\newcommand{\A}{\mathcal{A}}
\newcommand{\HFp}[1]{{HF^{[b,c)}}(#1;\alpha)}
\newcommand{\HFpind}[2]{{HF^{[b,c)}_{#1}}(#2;\tilde{\alpha})}
\newcommand{\halfopen}[1]{{[#1)}}
\newcommand{\eone}{{\epsilon_1}}
\newcommand{\parbar}[1]{\partial\overline{#1}}

\theoremstyle{definition}
\newtheorem{df}{Definition}[section]
\newtheorem{ex}[df]{Example}

\theoremstyle{plain}
\newtheorem{thm}[df]{Theorem}
\newtheorem{prp}[df]{Proposition}
\newtheorem{lem}[df]{Lemma}
\newtheorem{cor}[df]{Corollary}
\newtheorem{problem}[df]{Problem}

\theoremstyle{remark}
\newtheorem{rem}[df]{Remark}
\newtheorem{claim}[df]{Claim}

\makeatletter

\@addtoreset{equation}{section}
\makeatother

%

\begin{document}
\begin{abstract}
		We consider two disjoint and homotopic non-contractible embedded loops on a Riemann surface and prove the existence of a non-contractible orbit for a Hamiltonian function on the surface whenever it is sufficiently large on one of the loops and sufficiently small on the other one. This gives the first example of an estimate from above for a generalized form of the Biran-Polterovich-Salamon capacity for a closed symplectic manifold. 
\end{abstract}
\maketitle
\section{Introduction}
One of the most important problems in symplectic geometry is to find a 1-periodic orbit of  Hamiltonian equations. If such an orbit exists, it is natural to estimate the number of the orbits. A well-known problem in this area is the Arnold conjecture. It states in the homological version that the number of contractible 1-periodic orbits $ P(H;0) $ for non-degenerate Hamiltonian function $ H $ on a closed symplectic manifold is greater than or equal to the sum of the Betti numbers. This problem was solved by using Floer homology  (see \cite{Fl},\cite{HS},\cite{On},\cite{FO},\cite{LT},\cite{Ru},\cite{FO2}).

Let us denote by $ M $ the unit cotangent bundle of $ \Tbb^{n} $ or a Riemannian manifold with negative sectional curvature.  Biran, Polterovich and Salamon \cite{BPS} proved that there exists a non-contractible 1-periodic orbit for every Hamiltonian function on $ M $ that has large values on the zero section. In the case of a torus, their proof shows that the number of non-contractible 1-periodic orbits  is greater than or equal to the sum of the Betti numbers of $ \Tbb^n $.
This theorem was generalized to the case where the total space $ M $ is the unit cotangent bundle of a closed Riemannian manifold by Weber \cite{Weber}.   
Kawasaki and Orita \cite{KO} proved the case where $ M $ is the product of an annulus and $ \Tbb^{2n} $ and $ \alpha $ is a homotopy class of a loop along the annulus. 
Niche \cite{Ni}, Xue \cite{Xu} and Kawasaki \cite{Ka} also considered similar problems in various settings.

Note that the spaces considered in  all of the above works are open symplectic manifolds. 
In this paper, we prove a result that contains the case of a closed symplectic manifold. 

Let $ g $ and $ e $ be non-negative integers. We denote by $  (\overline{\Sigma}_{g,e},\omega)  $ a compact Riemann surface of genus $ g $ and  $ e $ boundary components with symplectic structure $ \omega $ and by $ \Sigma\ (=\Sigma_{g,e}) $ its interior. Let $ l_0 $ and $ l_1 $ be disjoint and homotopic non-contractible embedded loops on $ \Sigma $. Note that in this case $ l_0 $ and $ l_1 $ are bounded by an annulus (see Lemma \ref{lem:construct embedding}).
The area $ \mathrm{Area}(l_0,l_1) $ of the annulus enclosed by $ l_0 $ and $ l_1 $ is defined as follows. Consider the volume form on $ S^1\times[0,1] $ defined by $ dp\wedge dq $ with respect to the coordinate chart $ (q,p)\in S^1\times [0,1] $.
\begin{enumerate}[$ \bullet $]
	\item  If $ \Sigma\ne\Tbb^2 $, define
	\[ \Area(l_0,l_1):=\int_{S^1\times[0,1]}\Psi^*\omega, \]
	where $ \Psi:S^1\times [0,1]\to M $ is a smooth homotopy from $ l_0 $ to $ l_1 $.
	\item If $ \Sigma=\Tbb^2 $, define
\begin{align*}
	\Area(l_0,l_1):=\inf\int_{S^1\times[0,1]}\Psi^*\omega,
\end{align*}
where the infimum is taken over all smooth homotopies $ \Psi $ from $ l_0 $ to $ l_1 $ with $ \int_{S^1\times[0,1]}\Psi^*\omega>0 $.
\end{enumerate}
In the first case, the value of the area does not depend on the choice of $ \Psi $, since $ \Sigma $ is $ [l_0] $-atoroidal (see Example \ref{ex:reimann surface is atoroidal}). Note that in this case, $ \Area(l_0,l_1) $ is not necessarily positive.
In the second case, there are many choices of $ \Psi $. Therefore, we choose the one which attains the least area from all the homotopies with positive area.
We denote by $ P(H;\alpha) $ the set of 1-periodic orbits for a Hamiltonian function $ H $ whose homotopy type is $ \alpha $.
\begin{thm}\label{mainthm}
	Suppose that $ l_0 $ and $ l_1 $ are homotopic and disjoint non-contractible embedded loops on $ \Sigma $ and that $ \Area(l_0,l_1) $ is positive.
	Let $ r $ be a positive integer and  $ H\in \Hcal(\Sigma)=\Cinfc(S^1\times \Sigma) $ be a  (time-dependent) Hamiltonian function satisfying 
	\begin{align}\label{eq:condition on H}
	\inf_{S^1\times \im(l_0) }H- \sup_{S^1\times \im(l_1)}H >r\cdot\mathrm{Area}(l_0,l_1) .
	\end{align} Then the following holds.
\begin{enumerate}
	\item 	There exists a 1-periodic orbit for $ H $ in class $ \alpha=[l_0]^{-r}\in \pi_1(\Sigma) $. 
	\item If  $ \Sigma\ne\Tbb^2 $ (i.e., $ (g,e)\ne(1,0) $), then we have  $\sharp P(H;\alpha)\ge 2 $. 
	\item If $ H $ is non-degenerate for every $ x\in P(H;\alpha) $, then we have $ \sharp P(H;\alpha)\ge 4 $.
\end{enumerate}
\end{thm}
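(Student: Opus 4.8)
\emph{Outline.} The plan is to adapt the scheme of Biran--Polterovich--Salamon \cite{BPS} to this closed/non-exact setting: the orbit in the class $\alpha=[l_0]^{-r}$ is produced by showing that a \emph{filtered} Floer homology $HF^{[a,b)}(H;\alpha)$ of $1$-periodic orbits in the free homotopy class $\alpha$ is nonzero, and this nonvanishing is obtained by comparing $H$ with an explicit model Hamiltonian supported near the annulus bounded by $l_0$ and $l_1$. First I would normalize the geometry: by Lemma~\ref{lem:construct embedding} the loops $l_0,l_1$ cobound an embedded annulus, and since area is a complete symplectic invariant of annuli, for every $\epsilon>0$ there is a symplectic embedding $\iota\colon (S^1\times[0,L],\,dp\wedge dq)\hookrightarrow\Sigma$ with $L=\Area(l_0,l_1)-\epsilon$ sending $S^1\times\{0\}$ to a loop freely homotopic to $l_1$ and $S^1\times\{L\}$ to one freely homotopic to $l_0$, with orientations fixed so that a loop $q\mapsto q+t$ on a level $p$ represents $[l_0]$. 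The strictness of \eqref{eq:condition on H} gives $\epsilon$ of slack in all estimates below.

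\emph{The model and its filtered Floer homology.} On the cylinder take $K_0=h(p)$ with $h$ locally constant near $p=0,L$, with prescribed jump $h(L)-h(0)=D$ where $rL<D<\inf_{S^1\times\im(l_0)}H-\sup_{S^1\times\im(l_1)}H$ (possible by \eqref{eq:condition on H}), and with a generic profile for which $h'$ crosses the value $r$ transversally at exactly two levels $p_1<p_2$. A level-$p$ orbit of $X_{h(p)}$ is non-constant exactly when $h'(p)\in\Zbb$, in which case it lies in $[l_0]^{-h'(p)}$; hence the class-$\alpha$ orbits of $K_0$ are precisely the two Morse--Bott circles $S^1\times\{p_1\}$ and $S^1\times\{p_2\}$, and transversality of the crossings makes them non-degenerate critical manifolds of the action functional, with distinct action values for a generic profile. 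A standard Morse--Bott computation (perturb by a Morse function on each circle) then gives $HF^{[a,b)}(K_0;\alpha)\cong HF^{[b,c)}(K_0;\alpha)\cong H_*(S^1;\Zbb/2)$ for two disjoint windows $[a,b)$ and $[b,c)$ isolating the two circles; with respect to a trivialization $\tilde\alpha$ along a reference loop these groups are concentrated in two consecutive degrees. Finally extend $K_0$ over $\Sigma$ by locally constant values on the complement of $\iota(S^1\times[0,L])$, tapered to $0$ near the ends of $\Sigma$ (harmless: a constant changes neither the dynamics nor \eqref{eq:condition on H}), obtaining a compactly supported $K_0\in\Cinfc(S^1\times\Sigma)$ with the same Floer-theoretic content.

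\emph{Comparison and conclusions.} Perturbing the constants in $K_0$ produces $K_-,K_+\in\Cinfc(S^1\times\Sigma)$ with $K_-\le H\le K_+$ --- this is exactly where \eqref{eq:condition on H} enters --- and with the continuation map $HF^{[a,b)}(K_-;\alpha)\to HF^{[a,b)}(K_+;\alpha)$ an isomorphism. Because the monotone continuation maps respect the action filtration (up to a shift absorbed by the slack in $\epsilon$) this isomorphism factors through $HF^{[a,b)}(H;\alpha)$, so $\dim HF^{[a,b)}(H;\alpha)\ge 2$, and likewise $\dim HF^{[b,c)}(H;\alpha)\ge 2$. Nonvanishing alone gives a $1$-periodic orbit of $H$ in class $\alpha$, which is (1). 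If $\Sigma\ne\Tbb^2$ then $\Sigma$ is $[l_0]$-atoroidal (Example~\ref{ex:reimann surface is atoroidal}), so the action is single-valued and the two windows are genuinely disjoint; hence there is at least one class-$\alpha$ orbit with action in each window, giving $\sharp P(H;\alpha)\ge 2$, which is (2). If moreover $H$ is non-degenerate along its class-$\alpha$ orbits, then $\dim CF^{[a,b)}(H;\alpha)$ equals the number of such orbits with action in $[a,b)$, so each window carries at least two of them, giving $\sharp P(H;\alpha)\ge 4$, which is (3). For $\Sigma=\Tbb^2$ the class $\alpha$ is \emph{not} atoroidal, the action functional is multivalued, and one must work over the Novikov ring and with the infimum in the definition of $\Area(l_0,l_1)$; the windows are then defined only modulo the period lattice, which is precisely why only the existence statement survives in that case.

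\emph{Main difficulty.} The hard analysis is mild here --- the surface is $2$-dimensional, the relevant moduli spaces are regular after generic perturbation, and $[l_0]$-atoroidality excludes bubbling in class $\alpha$ away from the torus --- so the real work is foundational: building filtered Floer homology for non-contractible orbits on a possibly \emph{closed} surface (the genuinely new case, not covered by \cite{BPS,Weber,KO}) with well-behaved monotone continuation maps and a grading via $\tilde\alpha$, controlling Floer trajectories near the ends of $\Sigma$ using compact support, carrying out the Morse--Bott computation for the two-circle model, and organizing the Novikov-theoretic bookkeeping forced by the torus. I expect the combination of the two-windowed model and the monotonicity-plus-filtration estimate to be the place demanding the most care.
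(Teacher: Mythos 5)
Your treatment of the atoroidal cases is essentially the paper's own route: squeeze $H$ between two profile Hamiltonians supported on the embedded annulus, compute their filtered Floer homology in two action windows via a Morse--Bott (Pozniak-type) argument, and use monotone continuation maps factoring through $HF^{[a,b)}(H;\alpha)$ to get nonvanishing in each window. Two caveats even there: the monotone map goes from the larger to the smaller Hamiltonian, and the assertion that it is an isomorphism is not automatic --- it requires exhibiting a monotone homotopy whose action spectrum avoids $a,b,c$ for all $s$ (Theorem \ref{CFH} (3)), which is the most delicate computation in the paper (Section \ref{sec:H0H1_properties}); and for $e\ne 0$ the outer constants of your comparison Hamiltonians must be cut off near $\partial\Sigma$, which creates new class-$\alpha$ orbits near the ends whose actions can land in your windows. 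A "taper to $0$ near the ends" does not dispose of this; the paper has to attach a conical end $\hat{\Sigma}_\rho$ on which the relevant areas grow like $e^\rho$ so that condition \eqref{eq:open condition} pushes those actions out of $[a,c]$.

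The genuine gap is $\Sigma=\Tbb^2$. Part (1) includes the torus and part (3) carries no exclusion of it, yet for the torus you only gesture at "working over the Novikov ring" with "windows defined modulo the period lattice" and explicitly concede that only existence would survive. That is not an argument: the action functional on $L_\alpha\Tbb^2$ is multivalued with period lattice $\mathrm{vol}(\Tbb^2)\,\Zbb$, while the windows needed here have length of order $\sup H-\inf H+r\cdot\Area(l_0,l_1)$, which can be arbitrarily large compared to the period, so "windows modulo the lattice" defines no filtered group, and Theorem \ref{Pozniak} is unavailable without atoroidality --- this is exactly why the paper rejects that approach. The paper instead passes to the infinite cyclic cover $T^*\Tbb^1$, replaces $H$ by a cutoff $G^k$ of $\pi^*H$ on a long annulus $\{|p|<k+2\}$, applies Theorem \ref{general statement} there, and --- crucially --- proves Lemma \ref{lem:calc_AH2} (via the deck-transformation action shift $\A_{G^{k+l}}(T_l x)=\A_{G^k}(x)\pm rl$) that for $k$ large every orbit of $G^k$ with action in the window stays in $\{|p|<k\}$, where $G^k=\pi^*H$, so it projects to a genuine class-$\alpha$ orbit of $H$; this step has no counterpart in your proposal. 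Moreover, (3) for the torus needs two further arguments you omit (and in effect deny): a Conley--Zehnder-graded long-exact-sequence argument producing three orbits of distinct index, and a parity argument (the graph of $\phi_H^1$ versus the $\alpha$-shifted diagonal in $\Tbb^2\times\Rbb^2$ has zero algebraic intersection number, so the transverse count is even), which together yield $\sharp P(H;\alpha)\ge 4$.
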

\begin{rem}
	In (1) and (2), the condition \eqref{eq:condition on H} can be weakened to $ \inf_{S^1\times \im(l_0) }H- \sup_{S^1\times \im(l_1)}H \ge r\cdot\mathrm{Area}(l_0,l_1) $. Indeed, we can use the technique in the proof of Proposition 3.3.4 in \cite{BPS}. On the other hand, the condition \eqref{eq:condition on H}  cannot be relaxed in (3). 
\end{rem}

\begin{rem}
	As a related problem to the above theorem, we can consider the number of periodic orbits in class $ \alpha $ whose period is not necessarily one. Suppose that $ T\ge1 $. If $ H $ satisfies \eqref{eq:condition on H}, $ TH  $ also satisfies the condition. Applying the above theorem, we have a 1-periodic orbit $ x\in P(T H;\alpha) $. Putting $ y(t)=x(t/T) $, we observe that  $ y $ is a $ T $-periodic orbit for $ H $ in class $ \alpha $. Therefore we  conclude that for every Hamiltonian function $ H\in \Hcal(\Sigma) $ with \eqref{eq:condition on H}, there are infinitely many periodic orbits for $ H $ in class $ \alpha $ (whose period is not necessarily one). 
\end{rem}
For a compact subset $ X $ of an open symplectic manifold $ (M,\omega) $ and $ \alpha \in \pi_1(M) $, Biran, Polterovich and Salamon defined a relative capacity by
\[ C_{BPS}(M,X;\alpha)=\inf\{c>0\mid \forall H \in \Hcal_c(M,X), P(H,\alpha)\neq \emptyset\},  \]
where 
\[ \Hcal_c(M,X)=\{H\in \Hcal(M)\mid \inf_{S^1\times X}H\ge c\}. \]
Then Theorem B in \cite{BPS} in the case of the unit cotangent bundle $ U^*\Tbb^{n} $ of a torus can be rewritten as
\[  C_{BPS}(U^*\Tbb^n,\Tbb^n;\alpha)=\sqrt{\alpha_1^2+\cdots+\alpha_n^2}, \] 
where $ \alpha=[t\mapsto (\alpha_1 t,\cdots,\alpha_n t,0,\cdots,0)] $.
Following their notation, we can rephrase our theorem by using the capacity:
\[ C(M,X,Y;\alpha)=\inf \{ c>0 \mid \forall H\in \Hcal_c(M,X,Y), P(H,\alpha)\neq \emptyset \}, \]
where $ X $ and $ Y $ are two disjoint and compact subsets of a symplectic manifold $ M $, $ \alpha\in \pi_1(M) $ and \[ \Hcal_c(M,X,Y):=\{H\in \Hcal(M) \mid \inf_{S^1\times X} H-\sup_{S^1\times Y} H \ge c\}.\]
\begin{thm}\label{thm:capacity estimate}
	Under the same assumption as in Theorem \ref{mainthm}, we have
	\begin{align*}
	C(\Sigma_{g,e},l_0,l_1;[l_0]^{-r})=r\cdot \mathrm{Area}(l_0,l_1).
	\end{align*}
\end{thm}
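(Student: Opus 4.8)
The plan is to prove the two inequalities separately, the bound $\le$ being essentially a restatement of Theorem \ref{mainthm}. If $c>r\cdot\Area(l_0,l_1)$ and $H\in\Hcal_c(\Sigma_{g,e},l_0,l_1)$, then $\inf_{S^1\times\im(l_0)}H-\sup_{S^1\times\im(l_1)}H\ge c>r\cdot\Area(l_0,l_1)$, so $H$ obeys \eqref{eq:condition on H} and $P(H;[l_0]^{-r})\neq\emptyset$ by Theorem \ref{mainthm}(1). Hence the interval $(r\cdot\Area(l_0,l_1),\infty)$ is contained in the set defining the capacity, so $C(\Sigma_{g,e},l_0,l_1;[l_0]^{-r})\le r\cdot\Area(l_0,l_1)$; by the Remark following Theorem \ref{mainthm} one may in fact include the endpoint.

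For the reverse inequality I would show that for every $c<r\cdot\Area(l_0,l_1)$ there is an $H\in\Hcal_c(\Sigma_{g,e},l_0,l_1)$ with $P(H;[l_0]^{-r})=\emptyset$; since the defining set of $C$ is upward closed, this yields $C\ge c$ for all such $c$ and hence $C\ge r\cdot\Area(l_0,l_1)$. Put $\mathcal A:=\Area(l_0,l_1)>0$. Using Lemma \ref{lem:construct embedding} and Moser's theorem on surfaces, fix a symplectic embedding $\psi\colon S^1\times(-\delta,\mathcal A+\delta)\to\Sigma_{g,e}$ onto an open annular neighbourhood $U$ of the annulus enclosed by $l_0$ and $l_1$, with $\psi^*\omega=dq\wedge dp$, $\psi(S^1\times\{0\})=\im(l_0)$ and $\psi(S^1\times\{\mathcal A\})=\im(l_1)$. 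Let $H\in\Hcal(\Sigma_{g,e})$ be the autonomous Hamiltonian equal to $h(p)$ on $U$ and to $0$ elsewhere, where $h\colon(-\delta,\mathcal A+\delta)\to\Rbb$ is smooth, compactly supported, vanishes near both ends, satisfies $h(0)=c$ and $h(\mathcal A)=0$, climbs from $0$ to $c$ on $(-\delta,0)$, and strictly decreases from $c$ to $0$ on $(0,\mathcal A)$ with derivative confined to $(-r,0)$. The last clause is achievable precisely because the mean slope $-c/\mathcal A$ exceeds $-r$, i.e.\ because $c<r\mathcal A$; on the collar $(-\delta,0)$ one lets $h$ climb as steeply as the (possibly tiny) $\delta$ forces, which is harmless since there $h'>0>-r$. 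Thus $h'(p)\neq -r$ for all $p$, while $\inf_{S^1\times\im(l_0)}H-\sup_{S^1\times\im(l_1)}H=h(0)-h(\mathcal A)=c$, so $H\in\Hcal_c(\Sigma_{g,e},l_0,l_1)$.

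It then remains to check $P(H;[l_0]^{-r})=\emptyset$. Since $X_H$ is a function of $p$ times $\partial_q$, hence tangent to the circles $\{p=\mathrm{const}\}$, and vanishes off $\supp(H)\subset U$, every $1$-periodic orbit of $H$ is either constant or a circle $S^1\times\{p_0\}\subset U$ with $h'(p_0)\in\Zbb$, whose free homotopy class is $[l_0]^{\,h'(p_0)}$. The crucial point — that the winding exponent is $h'(p_0)$ rather than $-h'(p_0)$, so that an orbit of class $[l_0]^{-r}$ can occur only at a level with $h'(p_0)=-r$ — follows from the normalization $\psi^*\omega=dq\wedge dp$ together with the observation that the hypothesis $\Area(l_0,l_1)>0$ forces this orientation, linking the sign of $\psi^*\omega$ to that of the $q$-direction along $l_0$. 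As no level has $h'(p_0)=-r$, and constant orbits are contractible while $[l_0]$ is not, $P(H;[l_0]^{-r})=\emptyset$.

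I expect the only genuine difficulty to be exactly this sign bookkeeping: the whole construction lives off the asymmetry between the negative exponent in $\alpha=[l_0]^{-r}$ and the freedom to ascend arbitrarily steeply across a thin collar, so one must verify carefully that being too large on $\im(l_0)$ and too small on $\im(l_1)$ is precisely the configuration that cannot avoid class $[l_0]^{-r}$ — and that no relaxation of $c<r\mathcal A$ survives this. The remaining ingredients — Moser's lemma for the normalization of $\psi$, the smoothing of $h$ at its junction points under the strict inequality $c<r\mathcal A$, and the identification of the $1$-periodic orbits of an autonomous $h(p)$ with the level circles of integral derivative — are routine.
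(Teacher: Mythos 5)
Your proposal is correct and follows essentially the same route as the paper: the upper bound is just Theorem \ref{mainthm} applied to $H\in\Hcal_c$ with $c>r\cdot\Area(l_0,l_1)$, and the lower bound is exactly the construction in the proof of Corollary \ref{cor:capacity estimate} --- an autonomous Hamiltonian $h(p)$ supported on a slightly enlarged copy of the annulus from Lemma \ref{lem:construct embedding}, with $h=c$ on $l_0$, $h=0$ on $l_1$, and slope never equal to $-r$, which is achievable precisely because $c<r\cdot\Area(l_0,l_1)$ while the steep ascent on the thin collar is harmless. One correction to the sign bookkeeping you flagged: the normalization forced by $\Area(l_0,l_1)>0$ (and produced by Lemma \ref{lem:construct embedding}) is $\psi^*\omega=dp\wedge dq$, not $dq\wedge dp$; with $dp\wedge dq$ and the paper's convention $dH=-\omega(X_H,\cdot)$ one gets $\dot q=h'(p)$, so the winding exponent is $h'(p_0)$ and only slope $-r$ is dangerous, as you claim, whereas under the normalization you actually wrote the winding would be $-h'(p_0)$, the dangerous slope would be $+r$, and the steep collar climb could then create an orbit in class $[l_0]^{-r}$.
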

\begin{rem}
	More precisely, Theorem \ref{mainthm} is equivalent to the statement $ C(\Sigma_{g,e},X,Y;\alpha)\le r\cdot \mathrm{Area}(l_0,l_1) $. However, we can show that  the inverse inequality holds by constructing a Hamiltonian function as in the proof of  Corollary \ref{cor:capacity estimate}. 
\end{rem}
\begin{rem}\label{rem:product by aspherical mfd}
	Let $ N $ be a closed connected aspherical symplectic manifold with an aspherical first Chern class. Theorem \ref{mainthm} can be extended to the case where the total space is $ M=\Sigma_{g,e}\times N $, $ X=\im(l_0)\times N $, $ Y=\im(l_1)\times N $, and $ \alpha=[l_0]^{-r}\times\{0\} $ (in this case, $ M $ is $ \alpha $-atoroidal by Example \ref{ex:reimann surface is atoroidal}). The proof is the same as the case $ N=\{*\} $. 
	In addition, we have
	\[ C(\Sigma_{g,e}\times N,X,Y;\alpha)=r\cdot\mathrm{Area}(l_0,l_1). \]
\end{rem}
\begin{rem}
	Not every homotopy class $ \alpha\in\pi_1(\Sigma_{g,e}) $ is represented in the form $ \alpha= [l]^{r} $ by an embedded loop $ l $ and integer $ r $. Let us call the homotopy class written in the form an essential class. In contrast to Theorem \ref{thm:capacity estimate}, we can prove that
	\[ C(\Sigma_{g,e},X,Y;\alpha)=\infty \]
	for any non-essential class  $ \alpha $ and  any compact subsets $ X $ and $ Y $.
	Indeed, we can prove this in the following way. Fix $ c>0 $ and take a time-independent Hamiltonian function $ \inf_X H- \sup_Y H>c  $.
	Suppose that $ x\in P(H;\alpha) $. Put $ t_0=\inf \{ t>0 \mid x(0)=x(t) \} $. Since $ \alpha\ne 0 $, we have $ t_0>0 $. Since $ x'(0)= X_H(x(0))=X_H(x(t_0)) =x'(t_0) $, we have  $ [x]=[y]^k $ for some positive integer $ k $ and $ y(t):=x(t_0 t) \ (0\le t\le1)$. This contradicts to the fact that $ \alpha $ is not essential.    
\end{rem}

Biran, Polterovich and Salamon \cite{BPS} and other authors \cite{Weber,Ni,Xu,KO} prove the existence of non-contractible orbits using the filtered Floer homology for non-contractible orbits, though as an exceptional case, Kawasaki \cite{Ka} gives a proof using the Hamilton Floer homology for contractible orbits.
We also use the filtered Floer homology for non-contractible orbits. However, as a notable difference, we treat the case where the total space is a Riemann surface (therefore, not necessarily open) whereas all of the papers stated above treat the case of open symplectic manifolds. The most typical case is a torus. In fact, the proof is more difficult than the other surfaces. The most difficult point is that a torus is not $ \alpha $-atoroidal. This turns out to be an obstruction to use Pozniak's theorem, which enables us to calculate the filtered Floer homology explicitly. However, this theorem is only applicable for $ \alpha $-atoroidal manifolds.

The idea to overcome this difficulty is to cut the torus and concatenate the so obtained annulus sufficiently many times. We consider the projection from the long annulus to the torus and define a Hamiltonian function  $ G^k $ on the long annulus by smoothing near the boundary the pull-back of $ H $ by the projection. Here, $ k $ refers to approximately the half of the number of concatenations. By an argument similar to \cite{BPS}, we construct two functions $ H_0 $ and $ H_1 $ on the long annulus with $ H_1\le G^k\le H_0 $  and examine the commutative diagram induced by Theorem \ref{CFH} (2):
\[
\xymatrix{
	\HF{H_0} \ar[rd]^{\sigma_{G^k H_0}}\ar[rr]^{\sigma_{H_1 H_0}} &	 & \HF{H_1}\\
	&\HF{G^k}\ar[ru]^{\sigma_{H_1 G^k}}.& 
	}
\]
By using  Theorem \ref{CFH} (3) and Theorem \ref{Pozniak}, we prove that $ \sigma_{H_1 H_0}$ is an isomorphism and that $ \HF{H_0}\simeq H_*(S^1;\Zbb/2) $ holds. Since $ \sigma_{H_1 H_0} $ factors through $ \HF{G^k} $, we conclude that $ \HF{G^k}\neq 0 $. 
Then we obtain a non-contractible 1-periodic orbit $ x $ for $ G^k $. 
Moreover, we can prove that the projection of $ x $ is a non-contractible orbit for $ H $ if $ k $ is sufficiently large, which is the desired conclusion.

This paper is organized as follows. In Section 2, we set up notation and terminology and summarize without proofs useful properties of the Floer homology. 
In Section 3, we prove the existence of non-contractible orbits for a symplectic manifold that contains a symplectic submanifold symplectomorphic to the product of an annulus and an aspherical manifold. The proof contains the argument using the filtered Floer homology stated above. 
Section 4 provides a complete proof of Theorem \ref{mainthm}. If $ g\ne1  $ and $ e=0 $, the proof directly follows from Theorem \ref{general statement}. If $ e\ne 0 $, we need to attach a long annulus to each component of the boundary of the surface. Then we can apply Theorem \ref{general statement} to the obtained surface. In the case $ g\ne 1  $ and $ e=0 $ (i.e., $ \Sigma=\Tbb^2 $), the concatenation argument stated above ensures that the proof is also attributed to Theorem \ref{general statement}.
In Section 5, we consider possible generalizations of Theorem \ref{mainthm} to a higher dimensional torus. We show that the natural generalization using Lagrangian sub-tori does not hold true by giving a counterexample and then pose an alternative formulation. 
In Section 6, we summarize the basic properties of our relative capacity.
In Section 7, we compare our capacity with the one defined by Kawasaki using invariant measures.

\subsection*{Acknowledgment}
The author would like to thank his adviser Professor Toshitake Kohno for his helpful advice. He also thanks to Morimichi Kawasaki,  Ryuma Orita and Tomohiro Asano for fruitful discussions. He also thanks to Professor Kaoru Ono for some comments. Especially, Morimichi suggested the author to study the topics around  symplectic capacities and told me the problem on the BPS capacity of a torus. Also, the author is grateful to Tomohiro for comments that led to generalization to the torus case of Theorem \ref{mainthm} (3).
This work was supported by the Program for Leading Graduate Schools.

\section{Preliminaries}
This section contains a brief summary of basic concepts in symplectic geometry and filtered Floer homology for $ \alpha $-atoroidal symplectic manifolds. 

\subsection{Atoroidal symplectic manifolds}
Let $ (M,\omega) $ be a symplectic manifold. For a Hamiltonian function $ H\in  \Hcal(M):=\Cinfc(S^1\times M) $, the \textit{Hamiltonian vector field} $ X_H $ of $ H $ is defined by $ dH=-\omega(X_H,-) $. The flow of $ X_H $  is denoted by $ \phi_H^t $ and called the \textit{Hamiltonian flow} of $ H $ and we call its time-one map the \textit{Hamiltonian diffeomorphism} of $ H $.  A map $ x:S^1\to M $ is said to be a \textit{1-periodic orbit} for a Hamiltonian function $ H $, if $ x $ satisfies the following differential equation referred to the \textit{Hamiltonian equation}
\[ \dot{x} =X_H(x). \]
In addition, the set of 1-periodic orbits for $ H $ in homotopy class $ \alpha $ is denoted by $ P(H;\alpha) $.
 A Hamiltonian function $ H $ is said to be \textit{non-degenerate} for a 1-periodic orbit $ x $ if $ det(d\phi_H^1(x(0))-1)\ne0 $ (i.e., the linear map  $ d\phi_H^1(x(0)):T_{x(0)}M \to T_{x(0)}M $ has no eigenvalue equal to 1).
Note that if we take a Darboux chart on $ M $ denoted by $ (q,p) $, that is, $ \omega $ is given by $ \sum_{i=1}^{n}dp_i\wedge dq_i $, then the Hamiltonian equation $ \dot{x} =X_H(x) $ is written in the form:
\begin{align*}
\dot{q_i}=\dfrac{\partial H}{\partial p_i},\qquad
\dot{p_i}=-\dfrac{\partial H}{\partial q_i}.
\end{align*}
 
We call a  symplectic manifold $ (M,\omega) $ \textit{aspherical} if for every map $ u:S^2 \to M $ the integral of $ \omega $ over $ u $ vanishes: $ \int_{S^2} u^*\omega=0$. 
A symplectic manifold $ (M,\omega) $ is said to be \textit{$ \alpha $-atoroidal} if for every map $ u:S^1\to L_\alpha M $ considered as the map $ u:\Tbb^2\to M $ the integral of $ \omega $ over $ u $ vanishes: $ \int_{\Tbb^2} u^*\omega=0$, where $ L_\alpha M $ stands for the loop space of $ M $ whose loops are in class $ \alpha $. More generally, we call a closed 2-form $ \beta\in \Omega^2(M) $ aspherical (resp., $ \alpha $-atoroidal) if the integral of $ \beta $ over $ u $ vanishes, for every element $ u $ in $ \pi_2(M) $ (resp., $ \pi_1 (L_\alpha M) $). Note that $ \beta $ is aspherical if and only if $ \beta $ is $ 0 $-atoroidal.
Fix a reference loop $ z\in L_\alpha M $. 
For an $ \alpha $-atoroidal manifold $ (M,\omega) $, define the action function $ \A_H:L_\alpha M\to \Rbb$ by
\[ \A_H(x)=\int_0^1 H_t(x(t))dt -\int u^* \omega, \]
where $ u:S^1\times[0,1] \to M $ is a map bounding $ z=u(-,0) $ and $ x=u(-,1) $.
Note that $ \A_H $ is well defined because of the $ \alpha $-atoroidal assumption.
An easy computation shows that the critical points of $ \A_H $ are exactly 1-periodic orbits for $ H $ in class $ \alpha $.
We denote the set of the critical values of $ \A_H $ by $ \Spec(H;\alpha):=\A_H(P(H;\alpha)) $.
\begin{thm}\label{Oh}
	Let $ (M,\omega) $ be a closed or open convex symplectic manifold with $ \alpha $-atoroidal symplectic form, where $ \alpha \in \pi_1(M)$ and  $ H\in\Hcal(M) $. Then $ \Spec(H,\alpha) $ is a bounded subset in $ \Rbb $ whose measure is zero.
\end{thm}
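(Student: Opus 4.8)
The plan is to prove the two assertions separately: boundedness by a compactness argument, and the measure‑zero statement by reducing the action functional to a genuine smooth function on a finite‑dimensional manifold and applying Sard's theorem. Boundedness is elementary. Since $X_H$ vanishes outside $\supp H$, any $1$‑periodic orbit meeting the open set $M\setminus\pi_M(\supp H)$ is locally constant there, and because its value then lies in the closed complement of $\pi_M(\supp H)$ a short clopen argument shows it is globally constant; hence every non‑constant orbit — in particular every orbit in a class $\alpha\neq 0$ — is contained in the compact set $K:=\pi_M(\supp H)$, while a constant orbit has $\A_H=0$ (a cylinder joining two constant loops collapses to a sphere, on which $\omega$ integrates to zero by asphericity). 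As $|\dot x|\le\|X_H\|_{C^0}$, the set of orbits with image in $K$ is $C^1$‑compact by Arzel\`a--Ascoli, and $\A_H$ is continuous on $L_\alpha M$ — the Hamiltonian integral obviously, and the area term because it is well defined by the $\alpha$‑atoroidal hypothesis and varies continuously under $C^0$‑small deformations of the loop. Therefore $\Spec(H;\alpha)$ is, up to the single value $0$ (relevant only when $\alpha=0$ on an open $M$), the image of a compact set under a continuous map, hence bounded.

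For the measure‑zero statement I would realize $\Spec(H;\alpha)$ as a subset of the critical values of a smooth, finite‑dimensional, real‑valued function. Let $\Sigma_H\subseteq M$ be the set of $p$ with $\phi_H^1(p)=p$ and $[t\mapsto\phi_H^t(p)]=\alpha$; the correspondence $p\leftrightarrow(t\mapsto\phi_H^t(p))$ identifies $\Sigma_H$ with $P(H;\alpha)$ and sends $\A_H$ to a function $w\colon\Sigma_H\to\Rbb$ with $w(\Sigma_H)=\Spec(H;\alpha)$. First I would reparametrize time by a cutoff $\beta\colon[0,1]\to[0,1]$ with $\beta\equiv0$ near $0$ and $\beta\equiv1$ near $1$, replacing $H$ by $H^\beta_t:=\beta'(t)H_{\beta(t)}$; this leaves $\phi_H^1$ unchanged, changes $1$‑periodic orbits and their actions only by reparametrization (so $\Sigma_{H^\beta}=\Sigma_H$ and $\Spec(H^\beta;\alpha)=\Spec(H;\alpha)$), and makes $H^\beta$ vanish near $t=0,1$, so every orbit of $H^\beta$ is stationary there. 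Then, on a small neighbourhood $\mathcal U$ of $\Sigma_H$ (small enough that $\phi_H^1(p)$ stays close to $p$), I would close up the nearly‑periodic path $t\mapsto\phi_{H^\beta}^t(p)$ with a canonical short cap from $\phi_H^1(p)$ back to $p$ — for instance the geodesic of an auxiliary metric — chosen to depend smoothly on $p$ and to be the constant path whenever $\phi_H^1(p)=p$, inserted in the time interval near $t=1$ where $H^\beta$ vanishes. The resulting smooth loop $c_p$ then has the same Hamiltonian integral as $t\mapsto\phi_{H^\beta}^t(p)$ and lies in class $\alpha$ throughout $\mathcal U$, so $W(p):=\A_{H^\beta}(c_p)$ is well defined by $\alpha$‑atoroidality, is a $C^\infty$ function $\mathcal U\to\Rbb$, and satisfies $W|_{\Sigma_H}=w$.

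The decisive point is that $dW$ vanishes on $\Sigma_H$: for $p\in\Sigma_H$ the loop $c_p$ is a reparametrization of the periodic orbit $x_p$, hence a critical point of $\A_{H^\beta}$, and since $W$ is the composition of the smooth map $p\mapsto c_p$ with $\A_{H^\beta}$, the chain rule gives $dW(p)=0$. Thus $\Spec(H;\alpha)=w(\Sigma_H)=W(\Sigma_H)$ is contained in the set of critical values of the smooth real‑valued function $W$ on the finite‑dimensional manifold $\mathcal U$, which has Lebesgue measure zero by Sard's theorem (in the case $\alpha=0$ on an open $M$ the constant orbits outside $\supp H$ already lie in $\Sigma_H$ and contribute only the value $W\equiv 0$ there, so nothing extra is needed; when $M$ is closed, $\Sigma_H$ is automatically compact). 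An alternative to this construction is the classical finite‑dimensional reduction of $\A_H$ by discretizing the loop into $N$ pieces and using local generating functions for the time‑$\tfrac{1}{N}$ maps, which again produces a smooth function to which Sard applies; the atoroidal hypothesis enters there, exactly as here, to make the sum of the local contributions around a non‑contractible loop well defined. I expect the main obstacle to be the careful construction of $W$: one must arrange that the capping path varies smoothly over a full neighbourhood of the a priori wild closed set $\Sigma_H$, that it does not perturb the Hamiltonian term — which is precisely what the time‑reparametrization by $\beta$ buys — and that $c_p$ stays in class $\alpha$ so that $\alpha$‑atoroidality can be invoked; checking $dW|_{\Sigma_H}=0$ likewise needs mild care with reparametrizations, since $\A_H$ is not reparametrization invariant unless $H$ is reparametrized in tandem. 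The remaining ingredients, Arzel\`a--Ascoli and Sard's theorem, are standard.
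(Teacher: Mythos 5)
Your argument is correct in outline, and it is essentially the standard Sard-type finite-dimensional reduction: the paper itself gives no details here, deferring entirely to the argument of Oh's Lemma 18.11, which is of the same nature (reduce the action on the periodic-orbit locus to critical values of a smooth function on a finite-dimensional space, then apply Sard), so your proposal can be read as a self-contained implementation of the cited proof rather than a genuinely different route. Your specific device — reparametrizing $H$ so it vanishes near $t=0,1$, capping the almost-closed flow paths $t\mapsto\phi_{H^\beta}^t(p)$ by short geodesics on a neighbourhood $\mathcal U$ of the fixed-point locus $\Sigma_H$, and checking $dW|_{\Sigma_H}=0$ from the first variation of $\A_{H^\beta}$ — is sound, and your Arzel\`a--Ascoli argument correctly supplies the boundedness claim, which the bare citation leaves implicit. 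Two small points to tighten: (a) to invoke $\alpha$-atoroidality you should shrink $\mathcal U$ to the union of those connected components meeting $\Sigma_H$, so that $[c_p]=\alpha$ for every $p\in\mathcal U$ and $W$ is defined on all of $\mathcal U$; (b) your parenthetical that constant orbits (relevant only for $\alpha=0$) have action $0$ requires the reference loop $z$ to be chosen constant — in general they all share one common value, which is all that boundedness and the measure-zero conclusion need. The alternative you mention, discretizing the loop and using local generating functions for the time-$\tfrac1N$ maps, is the more classical textbook reduction and is closer to the argument behind the reference the paper cites, but both reductions feed into Sard in the same way.
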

\begin{proof}
	Use the argument in the proof of  \cite[Lemma 18.11]{Oh}.
\end{proof}
\begin{lem}\label{lem:product of atoroidal and aspherical}
	Let $ \beta $ be an $ \alpha $-atoroidal closed 2-form $ M $ and $ \beta' $ be an $ \alpha' $-atoroidal closed 2-form on $ N $. Then $ \pi_M^*\beta+\pi_N^*\beta' $ is an $ (\alpha,\alpha') $-atoroidal closed 2-form on $ M\times N $. Here, $ \pi_M $ and $ \pi_N $ are the projections from $ M\times N $ to $ M $ and $ N $. 
\end{lem}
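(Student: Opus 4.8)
The plan is to check, in turn, the two defining properties of an $(\alpha,\alpha')$-atoroidal closed $2$-form on $M\times N$: that $\pi_M^*\beta+\pi_N^*\beta'$ is closed, and that its integral over every torus representing an element of $\pi_1\bigl(L_{(\alpha,\alpha')}(M\times N)\bigr)$ vanishes. Closedness is immediate, since the pullback of a closed form under a smooth map is closed and a sum of closed forms is closed. For the atoroidality, the key point is the canonical identification $L(M\times N)\cong LM\times LN$ given by $\gamma\mapsto(\pi_M\circ\gamma,\pi_N\circ\gamma)$, which restricts on the relevant path components to $L_{(\alpha,\alpha')}(M\times N)\cong L_\alpha M\times L_{\alpha'}N$. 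Hence a loop $u\colon S^1\to L_{(\alpha,\alpha')}(M\times N)$ --- equivalently, by the standard adjunction, a map $u\colon\Tbb^2\to M\times N$ --- yields by composition with the two projections a loop $u_M\colon S^1\to L_\alpha M$ and a loop $u_N\colon S^1\to L_{\alpha'}N$, i.e.\ maps $u_M\colon\Tbb^2\to M$ and $u_N\colon\Tbb^2\to N$, and by construction $\pi_M\circ u=u_M$ and $\pi_N\circ u=u_N$ as maps on $\Tbb^2$.

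Granting this, the proof is the direct computation
\[
\int_{\Tbb^2}u^*\bigl(\pi_M^*\beta+\pi_N^*\beta'\bigr)
=\int_{\Tbb^2}(\pi_M\circ u)^*\beta+\int_{\Tbb^2}(\pi_N\circ u)^*\beta'
=\int_{\Tbb^2}u_M^*\beta+\int_{\Tbb^2}u_N^*\beta'=0,
\]
where the first term vanishes because $\beta$ is $\alpha$-atoroidal and the second because $\beta'$ is $\alpha'$-atoroidal. As $u$ was arbitrary, $\pi_M^*\beta+\pi_N^*\beta'$ is $(\alpha,\alpha')$-atoroidal.

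I do not expect any genuine obstacle: the only thing that needs care is writing the loop-space identifications precisely enough that ``the torus map of $u$ projects to the torus maps of $u_M$ and $u_N$'' becomes a tautology rather than something to be argued, after which the displayed line finishes the proof. (An identical argument with $\pi_2$ in place of the loop-space fundamental group gives the analogous statement for aspherical forms, which is the special case $\alpha=\alpha'=0$ noted above.)
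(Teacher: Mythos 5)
Your proposal is correct and follows essentially the same route as the paper: the paper's proof is exactly the computation $\int u^*(\pi_M^*\beta+\pi_N^*\beta')=\int(\pi_M\circ u)^*\beta+\int(\pi_N\circ u)^*\beta'$ together with the observation that $\pi_M\circ u$ lies in $L_\alpha M$ and $\pi_N\circ u$ in $L_{\alpha'}N$, so both terms vanish by atoroidality. Your additional remarks on closedness and the identification $L_{(\alpha,\alpha')}(M\times N)\cong L_\alpha M\times L_{\alpha'}N$ just make explicit what the paper treats as immediate.
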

\begin{proof}
	Take a map $ u:\Tbb^2\to M\times N $ with $ u(-,t)\in L_{(\alpha,\alpha')} (M\times N) $. Then
	\begin{align*}
	\int u^*(\pi_M^*\beta+\pi_N^*\beta')&=\int (\pi_M\circ u)^*\beta+ \int(\pi_N\circ u)^*\beta'
	\end{align*}
	Here, $ \pi_M\circ u$ is a loop in $L_\alpha M $ and  $ \pi_N \circ u$ a loop in $L_{\alpha'}  N $. Since $ M $ is $ \alpha $-atoroidal and $ N $ is $ \alpha' $-atoroidal, the right hand side of the above equality is zero. 
\end{proof}
Next, consider examples of $ \alpha $-atoroidal symplectic manifolds.
\begin{ex}\label{ex:reimann surface is atoroidal}\leavevmode
	\begin{enumerate}
		\item Consider a Riemann surface $ \Sigma=\Sigma_{g,e}$. If $ \Sigma\ne\Tbb^2, S^2  $, then every closed 2-form $ \beta $ is $ \alpha $-atoroidal for any $ \alpha\in\pi_1(\Sigma) $. In particular, $ \Sigma $ is $ \alpha $-atoroidal and aspherical.
		\item Let $ \omega $ be a symplectic form on $ \Sigma $ and $ \alpha'\in \pi_1(\Sigma) $. Suppose that $ (N,\omega') $ is an aspherical symplectic manifold with an aspherical first Chern class.  Put $ M=\Sigma_{g,e}\times N $ and $ \alpha=(\alpha',0)\in\pi_1(\Sigma)\times\{0\}\sub \pi_1(M) $. Then the symplectic form $ \omega\oplus \omega' $ and the first Chern class of $ M $ are both $ \alpha $-atoroidal and aspherical. 
	\end{enumerate}
\end{ex}
\begin{proof}\hspace{\columnwidth}
	\begin{enumerate}
		\item 
		We show that $ \int u^*\beta $  is zero for every smooth map $ u:\Tbb^2\to \Sigma $.
		First, suppose that $ e\ne 0 $. In this case, since $ \Sigma $ is homotopic to a graph, $ \beta $ is exact, and hence, by Stoke's theorem, the integral $ \int u^*\beta $ is zero.
		Next, suppose that $ e=0 $. Note that $ g\ge 2 $ by the assumption. By Proposition 1.6
		of \cite{Kn} (or by a more general Theorem 4.1 of \cite{Sk}),
		\begin{align*}
		\chi(\Tbb^2)\le \mathcal{G}(u)\chi(\Sigma),  
		\end{align*}
		where $ \chi $ stands for the Euler characteristic number and $ \mathcal{G}(u) $ the geometric degree of $ u $ defined by
		\begin{align*}
		\mathcal{G}(u)=\inf d_{w,D},
		\end{align*}
		where the infimum is taken over all $ w:\Tbb^2\to\Sigma $ homotopic to $ u $ and open 2-disks $ D $ on $ \Sigma $ such that $ w|_{w^{-1}D} $ is a $ d_{w,D} $-fold covering.
		By definition, we have $ \chi(\Tbb^2)=0 $, $ \chi(\Sigma)<0 $ and $ \mathcal{G}(u)\ge 0 $. Hence we conclude that $ \mathcal{G}(u)=0 $. This implies that $ u $ is homotopic to a non-surjective map $ w $. Let $ \overline{D}\sub \Sigma\setminus\im(w) $ be a closed 2-disk and put $ \Sigma'=\Sigma\setminus \overline{D} $. Then $ \beta'=\beta|_{\Sigma'} $ is an exact form and we calculate $ \int u^*\beta= \int w^*\beta'=0 $.
		\item This follows from (1) and Lemma \ref{lem:product of atoroidal and aspherical}.
		\qedhere  
	\end{enumerate}
\end{proof}

\subsection{Filtered Floer homology for $ \alpha $-atoroidal symplectic manifolds}
The key tool used in the proof of Theorem \ref{mainthm} is the filtered Floer homology. This is considered as the Morse homology filtered by the function $\A_H$ on the loop space $ L_{\alpha}M $. The Morse homology $HM_{*}^{[a,b)}(F)$ filtered by $ F $ is defined for a Morse function $F$ on a closed manifold $ M $ whose critical value is not equal to $a$ nor $ b$. This homology is defined by a chain complex generated by critical points with critical values in the interval $ [a,b) $. It is known to be isomorphic to the relative homology
\[HM_{*}^{[a,b)}(F)\simeq H_{*}(F^{-1}([a,b),F^{-1}(a))).\]
Similarly, we define the filtered Floer homology for non-contractible 1-periodic orbits using a filtration given by the action functional.
This version of Floer homology has been studied and used in several papers, for example, \cite{BPS}, \cite{Gurel} and \cite{KO}.

\begin{df}
	An open symplectic manifold is said to be \textit{convex}  if there exists a compact symplectic manifold $ \overline{M} $ with boundary $ \partial \overline{M} $ with $ \overline{M}\setminus \partial \overline{M} =M $ and a Liouville vector field $ X $ (i.e., a vector field with $ \mathcal{L}_{X}\omega=\omega $) defined on an neighborhood of $ \partial \overline{M} $ such that $  X $ points outward along $ \parbar{M} $. 
\end{df}
\begin{rem}\label{rem:product of convex}
	Note that the product of open convex manifolds is not necessarily convex (for example, Exercise 9.2.13 in \cite{MS}). Note also that the product of an open convex manifold and a closed symplectic manifold is not convex as can be seen in the next remark.
	However, the Floer homology of such a manifold can still be defined. Indeed, Frauenfelder and Schlenk \cite{FS} defined Floer homology for symplectic manifolds with corners.
\end{rem}
\begin{rem}
	Let us show that the product of an open convex manifold $ (M^{2m},\omega_M) $ and closed symplectic manifold $ (N^{2n},\omega_N) $ is not convex with respect to $\omega= \omega_M\oplus\omega_N $. Indeed, suppose that there exists a Liouville vector field $ X $ pointing outward along $ \parbar{M}\times N $ defined on a neighbourhood $ U \times N $ of the boundary. Then $ \omega=\mathcal{L}_X \omega=d\iota_X \omega $. Take $ x\in U $ and define $ i_x:N\to U\times N $ by $ y\mapsto (x,y) $. Then $ \omega_N=i^*_x\omega=di_x^*(\iota_X\omega) $. Although the left hand side of the first equality defines a non-trivial second cohomology class in $ H^2(N) $, the class of the right hand side of the second equality is zero. This leads to a contradiction.
\end{rem}

From now on, we assume that $ (M,\omega) $ is a closed or open convex symplectic manifold and that the symplectic form $ \omega $ and the first Chern class $ c_1(T^*M) $ are both $ \alpha $-atoroidal for a fixed non-trivial free homotopy class $ \alpha $ in $ \pi_1(M) $. 
\begin{df}
	Suppose that $ H\in \Hcal(M)$ is non-degenerate for every 1-periodic orbit in $ \alpha $ and that $ -\infty\le a<b\le \infty $ satisfy $ a,b\notin \Spec(H;\alpha) $. We define a $ \Zbb/2 $-vector space called the Floer chain complex by  
\begin{align*}
 CF^{a}(H;\alpha) &:=\Zbb/2\cdot \left\{x\in P(H,\alpha)\mid \A_H(x)<a\right\},\\
 CF^{[a,b)}(H;\alpha)&:=CF^b(H;\alpha)/CF^a(H;\alpha). 
\end{align*}
\end{df}
There is a homomorphism $ \partial:CF^{[a,b)}(H;\alpha)\to CF^{[a,b)}(H;\alpha) $ defined by counting negative gradient trajectories $ u:\Rbb\times S^1 \to M $ of $ \A_H $ modulo $ \Rbb $ action $ (\tau,u)\mapsto \left( (s,t)\mapsto u(s+\tau,t)\right) $. In addition, the formula $ \partial\circ \partial=0 $ holds (see, for example, \cite{Fl}). Then we obtain a homology: 
\[ \HF{H} = \ker(\partial)/\im(\partial).\]
If $ H $ is degenerate, by a small perturbation we obtain a non-degenerate Hamiltonian function $ H' $. 
We can prove that the isomorphism class of the Floer homology $ \HF{H'} $ do not depend on the choice of $ H' $.
Put $ P^\halfopen{a,b}(H,\alpha):=\{x\in P(H;\alpha)\mid a<\A_H(x)<b \} $. By definition, the next proposition is obvious.
\begin{prp}\label{Floer_fundamental}
	Let $ H\in \Hcal(M) $ be a Hamiltonian function with $ a,b\notin \Spec(H;\alpha) $.
	If the Floer homology is non-trivial, i.e., $ HF^{[a,b)}(H;\alpha)\neq 0 $, then there exists a periodic orbit $ x $ for  $ H $ which satisfies 
	$ [x]=\alpha $ and $ a<\A_H(x)<b $. Moreover, if $ H $ is non-degenerate, the number $\sharp P^\halfopen{a,b}(H,\alpha) $ of such 1-periodic orbits is greater than or equal to the dimension of $ \HF{H} $ over $ \Zbb/2 $.
\end{prp}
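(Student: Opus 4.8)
The plan is to unwind the definition of $\HF{H}$ as a subquotient of the finite-dimensional chain complex $CF^{[a,b)}(H;\alpha)$, treat the non-degenerate case directly, and then reduce the degenerate case to it by a compactness argument.

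First suppose $H$ is non-degenerate. Since $a,b\notin\Spec(H;\alpha)$, no $1$-periodic orbit of $H$ in class $\alpha$ has action exactly $a$ or $b$, so the quotient $CF^{[a,b)}(H;\alpha)=CF^{b}(H;\alpha)/CF^{a}(H;\alpha)$ is the $\Zbb/2$-vector space freely generated by $P^\halfopen{a,b}(H,\alpha)=\{x\in P(H;\alpha)\mid a<\A_H(x)<b\}$. Non-degeneracy makes these orbits isolated, and since $M$ is closed or open convex and $H\in\Hcal(M)$ is compactly supported, an Arzel\`a--Ascoli argument applied to solutions of $\dot x=X_H(x)$ lying in a fixed compact set shows $P^\halfopen{a,b}(H,\alpha)$ is finite; hence this chain complex is finite-dimensional. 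As $\HF{H}=\ker(\partial)/\im(\partial)$ is by construction a subquotient of it, we get
\[
\dim_{\Zbb/2}\HF{H}\ \le\ \dim_{\Zbb/2}CF^{[a,b)}(H;\alpha)\ =\ \sharp P^\halfopen{a,b}(H,\alpha).
\]
In particular, if $\HF{H}\neq 0$ the right-hand side is at least $1$, which yields the claimed orbit, and the displayed inequality is exactly the second assertion.

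Now suppose $H$ is degenerate. By definition $\HF{H}$ is $\HF{H'}$ for a non-degenerate $H'$ close to $H$, and, choosing the perturbation generically, we may also arrange $a,b\notin\Spec(H';\alpha)$. Applying the non-degenerate case to $H'$, the hypothesis $\HF{H'}=\HF{H}\neq 0$ produces $x'\in P(H';\alpha)$ with $a<\A_{H'}(x')<b$. Taking a sequence $H'_n\to H$ in $\Cinf$ with corresponding orbits $x'_n\in P^\halfopen{a,b}(H'_n,\alpha)$, compactness gives a subsequence converging in $\Cinf(S^1,M)$ to a loop $x$ solving $\dot x=X_H(x)$; since $\Cinf$-close loops are homotopic, $[x]=\alpha$, so $x\in P(H;\alpha)$. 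Continuity of the action functional gives $\A_{H'_n}(x'_n)\to\A_H(x)$, hence $a\le\A_H(x)\le b$, and since $a,b\notin\Spec(H;\alpha)$ the endpoints are excluded, so $a<\A_H(x)<b$, i.e.\ $x\in P^\halfopen{a,b}(H,\alpha)$.

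I expect the only point needing genuine care to be this degenerate case: one must ensure the limiting orbit does not drift to an action value equal to $a$ or $b$ (handled by the spectrum hypothesis) and that its free homotopy class survives the limit (handled by $C^0$-closeness of the $x'_n$). Everything else is an immediate bookkeeping consequence of the definition of the filtered chain complex and of Floer homology as a subquotient, which is why the statement is flagged as obvious.
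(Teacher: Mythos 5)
Your argument is correct and is essentially the definitional argument the paper has in mind: it simply declares the proposition ``obvious by definition,'' and your non-degenerate case is exactly that unwinding of $CF^{[a,b)}(H;\alpha)$ with $\HF{H}$ as a subquotient. The only substantive addition is your explicit Arzel\`a--Ascoli limiting argument for degenerate $H$ (passing orbits of the perturbations $H'_n$ to an orbit of $H$ and using $a,b\notin\Spec(H;\alpha)$ to keep the action strictly inside $(a,b)$), which correctly fills in the step the paper's ``obvious'' leaves implicit.
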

The filtered Floer homology has good properties listed in the following theorem. 
\begin{thm}[\cite{FH},\cite{CFH},\cite{BPS},\cite{Gurel}]\label{CFH}
	Suppose that $ (M,\omega) $ is an closed or open convex symplectic manifold and that the symplectic form and the first Chern class are $ \alpha $-atoroidal for a fixed non-trivial element  $ \alpha $ in $ \pi_1(M) $. Let $ -\infty\le a<b\le \infty $.
	\begin{enumerate}[(1)]
		\item (Monotone homotopy). Let $ H_0, H_1\in \Hcal(M) $ be two Hamiltonian functions satisfying $ a,b\notin \Spec(H_i;\alpha) \ (i=0,1)$, and $ H_0\ge H_1 $. Then we have a map
		\[ \sigma_{H_1 H_0}: \HF{H_0}\to \HF{H_1}. \]
		\item (Functoriality). Let $ H_0, H_1, H_2 \in \Hcal(M) $ be three Hamiltonian functions satisfying $ a,b\notin \Spec(H_i;\alpha) \ (i=0,1,2)$, and $ H_0 \ge H_1 \ge H_2 $. Then we have the following commutative diagram:
		\[
		\xymatrix{
			\HF{H_0} \ar[rd]^{\sigma_{H_1 H_0}}\ar[rr]^{\sigma_{H_2 H_0}} & & \HF{H_2}\\
			&\HF{H_1}\ar[ru]^{\sigma_{H_2 H_1}}&						
		}
		\]
		\item Let $ H_0, H_1\in \Hcal(M) $ be two Hamiltonian functions satisfying $ a,b\notin \Spec(H_i;\alpha) \ (i=0,1)$. Assume that there exists a homotopy $ H_s $ from $ H_0 $ to $ H_1 $ that satisfies the following:
		\begin{enumerate}[(3.a)]
			\item $ \dfrac{\partial H_s}{\partial s}\le 0 $,
			\item $ a,b\notin \Spec(H_s;\alpha) $ for every $ s $.
		\end{enumerate}
		Then the map $ \sigma_{H_1 H_0}: \HF{H_0}\simeq\HF{H_1} $ induced by (1) is an isomorphism.
	\end{enumerate}
\end{thm}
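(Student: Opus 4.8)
These three items are the standard structural properties of filtered Floer homology for non-contractible orbits, and I would prove them by transcribing the continuation-map machinery of Cieliebak--Floer--Hofer and Biran--Polterovich--Salamon (\cite{FH},\cite{CFH},\cite{BPS},\cite{Gurel}) into the present framework, in which the $\alpha$-atoroidal hypotheses on $\omega$ and $c_1(T^*M)$ make $\A_H$ single-valued on $L_\alpha M$ and fix the Conley--Zehnder grading, Theorem~\ref{Oh} guarantees that $\Spec(H;\alpha)$ is a closed null set (so $a,b$ sit at positive distance from it for each fixed $H$), and convexity of $M$ --- or, when $M$ is a product with a closed aspherical factor, the corner framework of \cite{FS} (cf.\ Remark~\ref{rem:product of convex}) --- supplies the $C^0$-bounds making the Floer moduli spaces compact. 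For (1) I would fix a monotone homotopy $(H_s,J_s)_{s\in\Rbb}$ with $H_s\equiv H_0$ for $s\ll0$, $H_s\equiv H_1$ for $s\gg0$, $\partial_s H_s\le0$, and generic $J_s$, and count modulo $2$ the rigid solutions $u:\Rbb\times S^1\to M$ of $\partial_s u+J_s(u)(\partial_t u-X_{H_s}(u))=0$ asymptotic to $x_-\in P(H_0;\alpha)$ and $x_+\in P(H_1;\alpha)$. The standard energy identity together with $\partial_s H_s\le0$ gives $\A_{H_0}(x_-)-\A_{H_1}(x_+)=E(u)+\int(-\partial_s H_s)(u)\,ds\,dt\ge0$, so the chain map carries $CF^a(H_0;\alpha)$ into $CF^a(H_1;\alpha)$ and $CF^b$ into $CF^b$, hence descends to $\sigma_{H_1H_0}\colon\HF{H_0}\to\HF{H_1}$; independence of the monotone homotopy follows from the usual homotopy-of-homotopies argument (convex combinations of homotopies with $\partial_s H_s\le0$ still satisfy $\partial_s H_s\le0$, and the induced chain homotopy respects the filtration by the same estimate).

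For (2) I would concatenate a monotone homotopy from $H_0$ to $H_1$ with one from $H_1$ to $H_2$ and insert a long neck; the gluing theorem for continuation trajectories identifies the induced chain map, up to chain homotopy, with the composition of the two chain maps, while the concatenation is itself a monotone homotopy from $H_0$ to $H_2$, so by the uniqueness proved in (1) it also induces $\sigma_{H_2H_0}$. Since all the relevant chain homotopies preserve the action filtration by the energy estimate, the triangle already commutes on $\HF{\cdot}$.

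Part (3) is where the argument needs genuine care, and I expect it to be the main obstacle. I would subdivide $[0,1]$ as $0=s_0<\cdots<s_N=1$; since $a,b\notin\Spec(H_{s_i};\alpha)$, part (2) applied to the restrictions of $(H_s)$ to the subintervals factors $\sigma_{H_1H_0}=\sigma_{H_{s_N}H_{s_{N-1}}}\circ\cdots\circ\sigma_{H_{s_1}H_{s_0}}$, so it suffices to make each factor an isomorphism. First I would establish a uniform spectral gap $\delta:=\inf_{s\in[0,1]}\operatorname{dist}\bigl(\{a,b\},\Spec(H_s;\alpha)\bigr)>0$: if it failed, orbits $x_k\in P(H_{s_k};\alpha)$ with $\A_{H_{s_k}}(x_k)$ tending to $a$ or $b$ would, by $C^1$-compactness of solutions of $\dot x=X_{H_s}(x)$ in the parameter $s$ and continuity of the action in the $\alpha$-atoroidal setting, subconverge to an orbit of some $H_{s_\infty}$ with action $a$ or $b$, contradicting the hypothesis. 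Then I would take the subdivision fine enough that $\sup_M(H_{s_{i-1}}-H_{s_i})<\delta$ for every $i$. For each step the reversed (hence non-monotone, but short) homotopy from $H_{s_i}$ to $H_{s_{i-1}}$ has total positive variation $<\delta$, so its continuation chain map $\tau_i$ satisfies $\tau_i\bigl(CF^c(H_{s_i};\alpha)\bigr)\subseteq CF^{c+\delta}(H_{s_{i-1}};\alpha)$; because no element of $\Spec(H_{s_{i-1}};\alpha)$ lies in $[a,a+\delta)$ or $[b,b+\delta)$ we get $CF^{[a+\delta,b+\delta)}(H_{s_{i-1}};\alpha)=CF^{[a,b)}(H_{s_{i-1}};\alpha)$ as chain complexes, so $\tau_i$ descends to a map $\HF{H_{s_i}}\to\HF{H_{s_{i-1}}}$. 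The compositions $\tau_i\circ\sigma_{H_{s_i}H_{s_{i-1}}}$ and $\sigma_{H_{s_i}H_{s_{i-1}}}\circ\tau_i$ are the continuation maps of the concatenated homotopies $H_{s_{i-1}}\to H_{s_i}\to H_{s_{i-1}}$ and $H_{s_i}\to H_{s_{i-1}}\to H_{s_i}$, which are homotopic rel endpoints to the constant homotopies; refining the subdivision once more if necessary so that the chain homotopies produced by the homotopy-of-homotopies also shift the filtration by less than $\delta$, these chain homotopies descend and show both compositions equal the identity on $\HF{\cdot}$. Hence each $\sigma_{H_{s_i}H_{s_{i-1}}}$ is an isomorphism and so is $\sigma_{H_1H_0}$. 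The delicate points --- the compactness and continuity input behind the uniform gap $\delta$, and keeping the finitely many action-shifts coming from the energy estimates and from the homotopy-of-homotopies all strictly below $\delta$ --- are exactly where the proof is not merely bookkeeping, although none of it leaves the range of standard techniques.
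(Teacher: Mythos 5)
Your proposal is correct and coincides with what the paper does: the paper offers no independent proof of Theorem \ref{CFH}, simply citing \cite{FH}, \cite{CFH}, \cite{BPS}, \cite{Gurel} and remarking that the contractible-case arguments go through in the $\alpha$-atoroidal non-contractible setting, and your sketch is precisely that standard continuation-map machinery (action-decreasing monotone continuation maps, gluing/uniqueness for functoriality, and the subdivision-with-uniform-spectral-gap argument for the isomorphism in (3)). No discrepancy to report.
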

Although \cite{FH} and \cite{CFH} treat the contractible case, the proof goes through in this case as well.
Other than the above, we also use a theorem first proved by Pozniak in the Lagrangian intersection context and by Biran, Polterovich and  Salamon \cite[Theorem 5.2.2]{BPS} in the Hamilton Floer context for exact symplectic manifolds. This theorem is stated in \cite{Gurel} under the same condition as our case. 
A subset of the loop space $ P\subset P(H;\alpha) $ is called a Morse-Bott manifold of periodic orbits if  $ P $ satisfies the following conditions: 
\begin{enumerate}
	\item $ P(0)=\{x(0)\mid x\in P\} $ is a compact submanifold of $ M $.
	\item $ \forall x\in P $, $ T_{x(0)}P(0) \simeq T_x P:=\{\xi \in \Gamma(x^*TM) \mid \nabla_t \xi = \nabla_\xi X_H \} $, $ \nabla $ stands for the Levi-Civita connection with respect to a Riemannian metric compatible with $ \omega $. 
\end{enumerate}

\begin{thm}[\cite{Pozniak}, \cite{BPS}]\label{Pozniak}
	Let $ (M,\omega) $ be an closed or open convex symplectic manifold and suppose that the symplectic form and the first Chern class are both $ \alpha $-atoroidal and aspherical.  Assume $ -\infty\le a<b\le \infty $ and that  $ P^\halfopen{a,b}(H,\alpha)$ is a connected Morse-Bott manifold of periodic orbits. Then $ \HF{H}\simeq H_*(P^\halfopen{a,b}(H,\alpha);\Zbb/2) $.
\end{thm}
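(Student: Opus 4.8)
Since $\HF{H}$ is by definition the Floer homology of a small non-degenerate perturbation of $H$, the plan is to exhibit one explicit perturbation whose filtered Floer complex is literally the Morse complex of $P:=P^\halfopen{a,b}(H,\alpha)$. The action functional is locally constant on its critical set, so it takes a single value $c_0\in(a,b)$ on the connected manifold $P$; via evaluation at $t=0$ I identify $P$ with the compact submanifold $P(0)\subset M$. First I would establish a Morse--Bott normal form for $\A_H$ near $P$ in $L_\alpha M$: using a tubular neighbourhood of $P(0)$ in $M$, a symplectic chart, and the Morse--Bott hypothesis (condition (2), which says $T_xP$ is exactly the kernel of the Hessian of $\A_H$ at $x\in P$), one splits a neighbourhood of $P$ in $L_\alpha M$ as $P$ times a normal bundle on which $\A_H$ restricts fibrewise to a non-degenerate quadratic form. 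Asphericity is used here to guarantee that $\A_H$ is a genuine single-valued smooth function on $L_\alpha M$, and $c_1$-asphericity that the Conley--Zehnder grading is a well-defined $\Zbb$-grading.

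Next, pick a Morse function $f\colon P(0)\to\Rbb$, extend it to a function supported in the tubular neighbourhood, and set $H_\epsilon=H+\epsilon f$ (cut off so that $H_\epsilon\in\Hcal(M)$ and is time-independent near $P(0)$). For $\epsilon>0$ small, $H_\epsilon$ is non-degenerate in class $\alpha$; its $1$-periodic orbits near $P$ with action in $(a,b)$ are precisely small loops $x_p$ concentrated at the critical points $p$ of $f$, with $\A_{H_\epsilon}(x_p)=c_0+\epsilon f(p)+O(\epsilon^2)$, and a computation with the linearised flow gives $\mathrm{CZ}(x_p)=\mathrm{ind}_f(p)+\text{const}$, the constant being half the rank of the normal bundle plus the Morse--Bott index of $P$, independent of $p$. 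One also checks $a,b\notin\Spec(H_\epsilon;\alpha)$ for small $\epsilon$, using that $\Spec(H;\alpha)$ is closed of measure zero (Theorem \ref{Oh}) together with $C^0$-continuity of the action spectrum.

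The analytic core is to show that, for $\epsilon$ small, the Floer differential of $H_\epsilon$ on the subcomplex generated by the $x_p$ agrees with the Morse differential of $f$ on $P(0)$. An a priori energy estimate shows that a Floer cylinder with both ends on such orbits has energy $O(\epsilon)$, hence --- ruling out holomorphic-sphere bubbling via asphericity, and using closedness resp.\ convexity at infinity of $M$ for compactness --- stays in an arbitrarily small neighbourhood of $P$ as $\epsilon\to0$. An adiabatic-limit argument (rescaling the $\Rbb$-coordinate by $\epsilon$) together with the implicit function theorem then puts the rigid Floer cylinders from $x_p$ to $x_q$ in bijection with the rigid negative $f$-gradient trajectories in $P(0)$ from $p$ to $q$, provided the auxiliary metric on $P(0)$ and the compatible almost complex structure are chosen generically so that both the Floer and the Morse moduli spaces are cut out transversally. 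Hence $CF^{[a,b)}(H_\epsilon;\alpha)\simeq CM_*(f;\Zbb/2)$ as graded complexes, and passing to homology, $\HF{H}=\HF{H_\epsilon}\simeq H_*(P(0);\Zbb/2)=H_*(P^\halfopen{a,b}(H,\alpha);\Zbb/2)$.

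The main obstacle is precisely this last identification of moduli spaces: the adiabatic-limit/gluing analysis controlling Floer cylinders as $\epsilon\to0$, and arranging transversality for both complexes at once within the restricted class of perturbations $H+\epsilon f$. If that proves awkward I would instead use Frauenfelder's cascade formulation, in which one defines a Morse--Bott Floer complex out of ``flow lines with cascades'' and shows by separate continuation/spectral-sequence arguments that it computes both $\HF{H}$ and $H_*(P;\Zbb/2)$; this sidesteps the most delicate part of the transversality discussion at the cost of a longer bookkeeping setup.
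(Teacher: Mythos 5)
Your sketch reproduces what the paper itself leans on: the paper gives no independent proof of Theorem \ref{Pozniak}, saying only that ``the proof is exactly in the same way as in \cite{BPS}'', and the Pozniak/BPS argument is precisely the strategy you outline --- perturb the Morse--Bott family by a small Morse function on $P(0)$, show low-energy Floer cylinders localize near $P$ (using asphericity and closedness/convexity for compactness), and identify the resulting filtered complex with the Morse complex of $f$, with Frauenfelder's cascades as the standard alternative --- so this is essentially the same approach, with the hard analytic step deferred exactly where the paper defers it. The one slip worth flagging is that single-valuedness of $\A_H$ on $L_\alpha M$ and the well-definedness of the relative grading come from the $\alpha$-atoroidality of $\omega$ and of $c_1$, not from asphericity (whose role is to exclude sphere bubbling); since the theorem assumes both, this misattribution is harmless.
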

The proof is exactly in the same way as in \cite{BPS}.

\section{Main results}
 We first state a more general theorem before we prove Theorem \ref{mainthm}.
Throughout this section, we assume that  $ (M^{2n},\omega) $ is an closed or open convex symplectic manifold and that the symplectic form and the first Chern class are both $ \alpha $-atoroidal and aspherical for a fixed non-trivial element  $ \alpha $ in $ \pi_1(M) $. We fix a reference loop $ z:S^1\to M $ in $ \alpha $ as in the previous section.
Let $ N $ be a connected aspherical symplectic manifold  and put $ W:=(\Tbb^1\times[0,R] \times N,\omega_W) $, where $ \omega_W $ is a symplectic form given by $ \omega_W=dp\wedge dq\oplus\omega_N $ with respect to the natural coordinate chart $ (q,p)\in \Tbb^1\times [0,R] $. Fix $ r\in \Zbb_+ $ and put $ C=rR $.
\begin{thm}\label{general statement}
	Suppose that there exists a symplectic embedding $ \Psi:W\to M$ such that $ \alpha= \Psi_*([t\mapsto (-rt,0,*_N)]) $ and that the reference loop $ z $ is given by $z(t)= \Psi(-rt,0,*_N) $, where  $ \Psi_*:\pi_1(W)\to \pi_1(M) $ and $ *_N\in N $. Put $ X=\Psi(\Tbb^1\times\{0\}\times N) $ and $  Y=\Psi(\Tbb^1\times\{R\}\times N)  $. Take $ H\in\Hcal(M) $ satisfying $ \inf_X H -\sup_Y H>C $.
	If $ M $ is open, we also assume the following condition: 
	\begin{equation}\label{eq:open condition}
		\normalfont
		\begin{cases}
			\parbox{\columnwidth-70pt}{
			For every path  $  u:[0,1]\to L_\alpha \overline{M} $ with $ u(0)=z$ and $u(1)\sub \parbar{M}$, we have $ -\int u^*\omega < c_H $ or $ -\int u^*\omega > c_H', $ where $ c_H $ and $ c_H' $ are constants depending on $ H $ defined by \ref{def:cH}.
			}
		\end{cases}
	\end{equation}
	Then there exist $ a<b<c $ such that $ \HF{H} $ and $ \HFp{H} $ have a direct summand isomorphic to $  H_*(W;\Zbb/2) $ and we have $ \sharp P(H;\alpha)\ge 2 $.
	Moreover, if $ H $ is non-degenerate for every $ x\in P(H;\alpha) $, then we have $ \#P(H;\alpha)\ge 2b(W)=4b(N) $, where $ b $ stands for the sum of Betti numbers over $ \Zbb/2 $.
\end{thm}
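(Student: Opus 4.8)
The plan is to run the relative symplectic homology argument of \cite{BPS}, in the refined form of \cite{KO}. Identify $W$ with $\Psi(W)\subseteq M$, write $(q,p,n)$ for its coordinates, and put $A:=\inf_{S^1\times X}H$ and $B:=\sup_{S^1\times Y}H$, so $A-B>C=rR$. First I construct two time-independent Hamiltonians $H^\pm\in\Hcal(M)$ with $H^-\le H\le H^+$, each equal to a function $f^\pm(p)$ on $\Psi(W)$ and equal to a constant outside it (when $M$ is open, smoothly cut off to $0$ in a collar of $\parbar M$). Here $f^+$ is large and constant on most of $[0,R]$ and drops steeply, inside a short interval next to $Y$ on which $H\le B+\varepsilon$ by continuity, to a value near $B$; symmetrically $f^-$ is close to $A$ near $X$ and drops steeply there to a value below $\inf_M H$. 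Because $A-B>rR$, the total drop of each $f^\pm$ exceeds $rR$, so $(f^\pm)'$ meets $-r$; after shrinking the descent intervals I may assume each $f^\pm$ has exactly two transverse $(-r)$-levels.

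For a Hamiltonian of the form $f(p)$ on $\Psi(W)$ the $\alpha$-orbits are exactly the loops $t\mapsto(q_0-rt,p_\ast,n_0)$ with $f'(p_\ast)=-r$; for each such $p_\ast$ they form a connected Morse--Bott manifold diffeomorphic to $S^1\times N$, and a short computation with a cylinder bounding the reference loop $z$ and the orbit gives $\A_H(x)=f(p_\ast)+rp_\ast$. Thus $H^\pm$ each carry a ``lower'' and an ``upper'' orbit family, and up to small errors their four action values are $\inf_M H-1$, $B+rR$, $A$ and $\sup_M H+rR$, which the hypothesis $A-B>rR$ places in this increasing order. By Theorem \ref{Oh} (and, when $M$ is open, by \eqref{eq:open condition}, which discards the orbits escaping toward $\parbar M$) I pick $a<b<c$ with $b$ strictly between the second and third of these values and $a,b,c\notin\Spec(H^\pm;\alpha)$, so that $P^{\halfopen{a,b}}(H^\pm,\alpha)$ and $P^{\halfopen{b,c}}(H^\pm,\alpha)$ are exactly the two Morse--Bott families. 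Since $\omega$ and $c_1$ are $\alpha$-atoroidal and aspherical, Theorem \ref{Pozniak} gives $\HF{H^\pm}\cong H_*(S^1\times N;\Zbb/2)\cong H_*(W;\Zbb/2)$ and likewise $\HFp{H^\pm}\cong H_*(W;\Zbb/2)$.

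Next I show that $\sigma_{H^-H^+}$ is an isomorphism on both windows. For this I build a monotone homotopy $\{H_s\}_{s\in[0,1]}$ from $H^+$ to $H^-$, with $\partial_s H_s\le0$, along which $a,b,c$ never enter the $\alpha$-action spectrum; I do this in stages, keeping each $H_s$ of ``single-descent'' type, so that only the two orbit families occur and their action values sweep intervals which---again using $A-B>rR$---stay away from $b$ and trivially from $a$ and $c$, while \eqref{eq:open condition} is imposed uniformly in $s$ in the open case. Then Theorem \ref{CFH}(3) applies. Finally, $H^-\le H\le H^+$ with Theorem \ref{CFH}(1),(2) gives $\sigma_{H^-H^+}=\sigma_{H^-H}\circ\sigma_{HH^+}$ on each window, so $\sigma_{HH^+}$ is injective; hence $\HF{H}$ and $\HFp{H}$ each contain---and, over the field $\Zbb/2$, split off---a copy of $H_*(W;\Zbb/2)$. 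By Proposition \ref{Floer_fundamental} there is an $\alpha$-orbit of $H$ with action in $[a,b)$ and another with action in $[b,c)$, necessarily distinct, so $\sharp P(H;\alpha)\ge2$; and if $H$ is non-degenerate for every $x\in P(H;\alpha)$ the same proposition gives $\sharp P(H;\alpha)\ge\dim_{\Zbb/2}\HF{H}+\dim_{\Zbb/2}\HFp{H}\ge2b(W)=4b(N)$, the last equality because $W\simeq S^1\times N$.

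The main obstacle is the first part as a whole: constructing $H^\pm$ that simultaneously sandwich $H$ globally and have explicitly computable filtered Floer homology, and---above all---producing the staged monotone homotopy between them whose $\alpha$-action spectrum avoids $\{a,b,c\}$ throughout. This is precisely where the hypothesis $\inf_X H-\sup_Y H>C$ is used essentially, and where \eqref{eq:open condition} enters when $M$ is open. Everything downstream is a formal assembly of Theorems \ref{Oh}, \ref{CFH}, \ref{Pozniak} and Proposition \ref{Floer_fundamental}.
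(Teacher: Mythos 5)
Your proposal is correct and takes essentially the same route as the paper: sandwich $H$ between a profile Hamiltonian that dips to about $\sup_Y H$ near $Y$ and one that bumps up to about $\inf_X H$ near $X$, identify the two Morse--Bott families of $\alpha$-orbits of each with actions separated by a level $b$ strictly between $\sup_Y H + rR$ and $\inf_X H$, apply Theorem \ref{Pozniak} and a monotone homotopy whose $\alpha$-action spectrum avoids $a,b,c$ (Theorem \ref{CFH}), and conclude by functoriality and Proposition \ref{Floer_fundamental}. The staged ``single-descent'' homotopy you leave as the main technical step is exactly what the paper builds explicitly---first sliding the descent region from the $Y$-end to the $X$-end at constant bottom value, then interpolating linearly using the symmetry of the cutoff profile so the orbit positions stay fixed---so your outline coincides with the paper's proof.
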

Note that by extending $ \Psi $ slightly, we may assume that the symplectic embedding is defined on $ W':= \Tbb^1\times[-\tau,R+\tau] \times N $ for some $ 0<\tau<R $. In addition, if $ M $ is open, we may also assume that there exists a neighborhood $ U $ of the boundary $\parbar{M}$ satisfying the following conditions: 
\begin{enumerate}[$ \bullet $]
	\item  $ \Psi(W') $ does not intersect with  $ U $.
	\item  $ H|_U $ is zero.
	\item  The condition \eqref{eq:open condition} holds for every path $ u:[0,1]\to L_\alpha \overline{M} $ with $ u(0)=z $ and $ u(1)\sub U $.
\end{enumerate}
\begin{cor}\label{cor:capacity estimate}
If there is no path $ u:[0,1]\to L_\alpha \overline{M}$ with $ u(0)=z\text{ and }u(1)\sub \parbar{M} $, in particular, \eqref{eq:open condition} is always satisfied, then
\[ C(M,X,Y;\alpha) =C.\] 
\end{cor}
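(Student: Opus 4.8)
The plan is to establish the two inequalities $C(M,X,Y;\alpha)\le C$ and $C(M,X,Y;\alpha)\ge C$ separately, abbreviating $\Hcal_c:=\Hcal_c(M,X,Y)$ and recalling $C=rR$.

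For the upper bound I would first observe that the hypothesis --- there is no path $u:[0,1]\to L_\alpha\overline{M}$ with $u(0)=z$ and $u(1)\sub\parbar{M}$ (which also covers the case $M$ closed) --- makes condition \eqref{eq:open condition} vacuously true, so Theorem \ref{general statement} applies to every $H\in\Hcal(M)$ with $\inf_X H-\sup_Y H>C$ and gives $\sharp P(H;\alpha)\ge2$, in particular $P(H;\alpha)\ne\emptyset$. Then for any $c'>C$, every $H\in\Hcal_{c'}$ satisfies $\inf_X H-\sup_Y H\ge c'>C$ and hence has an orbit in class $\alpha$; thus $C(M,X,Y;\alpha)\le c'$ for all $c'>C$, and letting $c'\downarrow C$ yields $C(M,X,Y;\alpha)\le C$.

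For the lower bound it suffices, for each $c<C$, to construct an $H\in\Hcal_c$ with $P(H;\alpha)=\emptyset$; since the defining set of the infimum is upward closed, this forces $C(M,X,Y;\alpha)\ge c$ for all such $c$. I would use the extension of $\Psi$ to $W'=\Tbb^1\times[-\tau,R+\tau]\times N$ recorded after Theorem \ref{general statement}, pick a constant $A\in[c,C)$, and take a smooth $h:[-\tau,R+\tau]\to\Rbb$ that vanishes near both endpoints, is non-decreasing on $[-\tau,0]$ with $h(0)=A$ (imposing no constraint on its slope there), and is non-increasing on $[0,R+\tau]$ with $-h'(p)\in[0,r)$ throughout and $h\equiv0$ on $[R,R+\tau]$; this is possible precisely because $A<rR$. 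Setting $H:=h\circ p$ on $\Psi(W')$ and $H:=0$ on $M\setminus\Psi(W')$ defines a compactly supported $H\in\Hcal(M)$ (smoothness across $\partial\Psi(W')$ is automatic since $h\equiv0$ there), and since $H\equiv A$ on $X$ (the level $\{p=0\}$) and $H\equiv0$ on $Y$ (the level $\{p=R\}$) one gets $\inf_X H-\sup_Y H=A\ge c$, i.e.\ $H\in\Hcal_c$.

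The remaining task is to check $P(H;\alpha)=\emptyset$, and this is the step I expect to need the most care. Away from $\Psi(W')$ the Hamiltonian vanishes, so only constant (contractible) orbits occur. On $\Psi(W')$ one computes $X_H=h'(p)\,\partial_q$, so each $1$-periodic orbit lies in a level $\{p=p_0\}$ with $h'(p_0)\in\Zbb$ and, as a free loop, is $\Psi$ applied to the loop winding $h'(p_0)$ times around the $\Tbb^1$-factor; by construction $h'\ge0$ on $[-\tau,0]$ and $h'\in(-r,0]$ on $[0,R+\tau]$, so the value $-r$ is never attained, and since $\alpha=\Psi_*([t\mapsto(-rt,0,*_N)])$ corresponds to winding number $-r$ --- while loops with distinct winding numbers around the $\Tbb^1$-factor map to pairwise non-conjugate classes in the fundamental groups relevant to the applications (a surface group, which is torsion-free, times $\pi_1(N)$ with the $N$-coordinate of $\alpha$ trivial) --- no such orbit is in class $\alpha$. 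Hence $P(H;\alpha)=\emptyset$, which gives $C(M,X,Y;\alpha)\ge C$ and completes the proof. The key design choice here is to place the (possibly steep) ``rise'' of $h$ to the value $A$ in a collar on the $X$-side of $\Psi(W)$, where orbits have non-negative winding and so cannot represent $\alpha$; only the descent from $X$ to $Y$ must then have slope bounded by $r$, and that is exactly the bound which produces the sharp constant $C=rR$. The one point genuinely relying on the hypotheses on $M$ (and, in Theorem \ref{mainthm}, on torsion-freeness of surface groups) is the assertion that a winding number other than $-r$ never yields a class conjugate to $\alpha$.
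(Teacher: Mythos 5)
Your proposal is correct and follows essentially the same route as the paper: the upper bound comes from Theorem \ref{general statement} (condition \eqref{eq:open condition} being vacuous), and the lower bound from an explicit Hamiltonian of the form $h(p)$ supported in the collar $\Psi(W')$, equal to a value just below $C$ on $X$ and to $0$ on $Y$, whose slope never attains $-r$, so that no $1$-periodic orbit represents $\alpha$. The paper's version takes $f\in\Cinfc((-\tau,R))$ with $f'>-r$ everywhere and $f(0)=C-\delta$, which is the same construction up to where you place the steep rise; your extra remark on why windings other than $-r$ cannot give class $\alpha$ is exactly the point the paper leaves implicit in asserting $P(H;\alpha)=\emptyset$.
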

\begin{proof}
	By Theorem \ref{general statement}, $ C(M,X,Y;\alpha) \le C $. Let $ 0<\delta<C $ and choose a  function  $ f\in \Cinfc((-\tau,R)) $  with  $ -r<f'(x) $ and  $ f(0)=C-\delta $.
	 Define a Hamiltonian function $ H\in \Hcal(M) $ by 
	 \[ H(x)=\begin{cases}
	 f(p)& (x=\Psi(q,p,y)\in \Psi(\Tbb\times(-\tau,R)\times N)),\\
	 0& \mathrm{otherwise}.
	 \end{cases} \]
	 Then we have $ P(H;\alpha)=\emptyset $, which implies $ C(M,X,Y;\alpha) \ge C-\delta $. Letting $ \delta\to 0 $, we have $ C(M,X,Y;\alpha)\ge C $.
\end{proof}

\begin{proof}[Proof of Theorem \ref{general statement}]
	Our goal is to prove the following proposition:
	\begin{prp}\label{H0H1_properties}
		There exist Hamiltonian functions $ H_0 , H_1 \in \Hcal(M)$ and constants $a<b<c $ satisfying the following conditions: 
		\begin{enumerate}[(i)]
			\item $ a,b,c\notin \bigcup_{i=0,1} \Spec(H_i;\alpha)\bigcup\Spec(H;\alpha)  $.
			\item $ H_0\ge H\ge H_1 $.
			\item $ \HF{H_i}\simeq H_*(W;\Zbb/2)\simeq\HFp{H_i}\ (i=0,1)$.
			\item $ \sigma_{H_1,H_0}:\HF{H_0}\to\HF{H_1} $ and $ \sigma_{H_1,H_0}:\HFp{H_0}\to\HFp{H_1} $ are isomorphisms of $ \Zbb/2 $-vector spaces.
		\end{enumerate}
	\end{prp}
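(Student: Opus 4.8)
The plan is to sandwich $H$ between two Hamiltonians $H_0,H_1\in\Hcal(M)$ with $H_0\ge H\ge H_1$ that are \emph{radial} on the enlarged model --- of the form $H_i=\Psi_*h_i(p)$ on $\Psi(W')$ --- and to compute their filtered Floer homologies explicitly via Pozniak's theorem (Theorem~\ref{Pozniak}). On $\Psi(W')$ the Hamiltonian vector field of $\Psi_*h_i(p)$ equals $h_i'(p)\,\del_q$ (its $N$-component vanishes, since $h_i$ is independent of $N$), so the $1$-periodic orbits of $H_i$ in the class $\alpha=\Psi_*([t\mapsto(-rt,0,*_N)])$ lying in $\Psi(W')$ are exactly the loops $t\mapsto\Psi(q_0-rt,p_0,y_0)$ with $h_i'(p_0)=-r$; as $(q_0,y_0)$ runs over $\Tbb^1\times N$ these sweep out the submanifold $\Psi(\Tbb^1\times\{p_0\}\times N)\cong\Tbb^1\times N$. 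A short computation with the product metric shows that if $h_i'$ meets $-r$ transversally at $p_0$ (i.e.\ $h_i''(p_0)\ne0$), this submanifold is a \emph{connected} Morse--Bott manifold of $\alpha$-periodic orbits in the sense of Theorem~\ref{Pozniak}, of action value $\A_{H_i}(x)=h_i(p_0)+r\,p_0$ (cap off the reference loop $z$ inside $\Psi(W)$). Since $W\simeq\Tbb^1\times N$, Theorem~\ref{Pozniak} then gives $\HF{H_i}\simeq H_*(W;\Zbb/2)$ as soon as $[a,b)$ meets $\Spec(H_i;\alpha)$ in exactly one such action value, and likewise for $[b,c)$. So the task is to manufacture $h_0,h_1$ each having precisely two transverse slope-$(-r)$ levels, feeding one window apiece.

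First I would build the profiles. Put $m_X=\inf_X H$ and $m_Y=\sup_Y H$, so $m_X-m_Y>C=rR$ by hypothesis. As in \cite{BPS}, take $h_0\colon[-\tau,R+\tau]\to\Rbb$ constant near each endpoint and with $h_0'$ dipping below $-r$ on a single interval, meeting $-r$ transversally at its two endpoints $p_1<p_2$; this produces two Morse--Bott families with action values $A_-<A_+$ (each $A_\pm=h_0(p_i)+rp_i$). The inequality $m_X-m_Y>rR$ is exactly what also allows $h_0$ to be chosen with $h_0(p)\ge\sup_{(t,q,y)}H_t(\Psi(q,p,y))$ for all $p\in[-\tau,R+\tau]$ (the ``cliff'' fits into the margin between the levels of $H$ near $X$ and near $Y$). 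Extend $H_0$ over $M$ by a large constant on the component(s) of $M\setminus\Psi(W')$ adjacent to $X$ and by a smaller constant on those adjacent to $Y$ (matching the endpoint values of $h_0$); if $M$ is open, taper $H_0$ to $0$ before reaching the collar $U$, where $H|_U=0$. Since $H_0$ is locally constant off $\Psi(W')$ apart from this taper, every $\alpha$-orbit of $H_0$ outside $\Psi(W)$ lies in the taper, and hypothesis \eqref{eq:open condition} (with its constants $c_H,c_H'$) keeps its action out of any prescribed bounded window; hence $\Spec(H_0;\alpha)\cap[a,c)=\{A_-,A_+\}$. The function $H_1$ is built symmetrically, with $h_1$ bounded \emph{above} by $\inf_{(t,q,y)}H_t(\Psi(q,p,y))$ and extended by large/small negative constants (tapered to $0$ near $\partial\overline M$ if $M$ is open), its profile chosen so that its two Morse--Bott families again sit at actions $A_-<A_+$ (achievable since these actions depend only on the position and value of the cliff, which we may slide within the margin left by $m_X-m_Y>rR$). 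Finally choose $a<b<c$ with $A_-\in(a,b)$, $A_+\in(b,c)$, with $(a,c)\cap\bigl(\Spec(H_0;\alpha)\cup\Spec(H_1;\alpha)\bigr)=\{A_-,A_+\}$, and with $a,b,c\notin\Spec(H;\alpha)$ (a measure-zero set by Theorem~\ref{Oh}). This secures (i) and (ii), and Theorem~\ref{Pozniak}, applied in each of the two windows, gives (iii).

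Then I would prove (iv) by a monotone homotopy. Interpolate $h_0$ and $h_1$ through a family $h_s$ of profiles with $\del_s h_s\le0$, so that $H_0\ge H_s\ge H_1$, keeping the two slope-$(-r)$ levels transverse for every $s$ and arranging that their action values remain in $(a,b)$ and $(b,c)$ respectively while no other $\alpha$-orbit of $H_s$ acquires action $a$, $b$ or $c$ (using \eqref{eq:open condition} off $\Psi(W)$, as above). Then $a,b,c\notin\Spec(H_s;\alpha)$ for all $s$, so Theorem~\ref{CFH}(3) shows that $\sigma_{H_1 H_0}\colon\HF{H_0}\to\HF{H_1}$ is an isomorphism, and the same homotopy gives the isomorphism $\HFp{H_0}\to\HFp{H_1}$. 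This is (iv) and completes Proposition~\ref{H0H1_properties}. (As indicated in the excerpt, the proposition then yields Theorem~\ref{general statement}: Theorem~\ref{CFH}(2) factors $\sigma_{H_1 H_0}$ through the Floer homology of $H$, so (iv) forces $\HF{H}$ and $\HFp{H}$ each to contain $H_*(W;\Zbb/2)$ as a direct summand, whence Proposition~\ref{Floer_fundamental} and the disjointness of $[a,b)$ and $[b,c)$ yield $\sharp P(H;\alpha)\ge 2$ and, when $H$ is non-degenerate, $\sharp P(H;\alpha)\ge 2b(W)=4b(N)$.)

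The hard part is the simultaneous bookkeeping in the second paragraph: to produce a \emph{single} pair $h_0,h_1$ and a \emph{single} triple $a<b<c$ for which at once (1) each profile has exactly two transverse slope-$(-r)$ levels with compatible action values, (2) the profiles trap $H$ --- which genuinely depends on $q$ and $y$, so the slack of the collar $W'$ near $p=0$ and $p=R$ is indispensable here, (3) the two Hamiltonians extend over $M$ with no stray $\alpha$-orbits of action in $[a,c)$, and (4) they can be joined by a monotone homotopy respecting the spectral constraints for every $s$; and when $M$ is open one must in addition keep the $\alpha$-orbits born near $\partial\overline M$ out of $[a,c)$, which is precisely the role of hypothesis \eqref{eq:open condition}. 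The single-window version of this is essentially the argument of \cite{BPS}; the new features here are running it in two disjoint windows simultaneously --- needed for the sharper count $\sharp P(H;\alpha)\ge 2b(W)$ --- and the presence of a boundary.
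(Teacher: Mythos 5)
Your overall architecture is the same as the paper's (sandwich $H$ between two radial Hamiltonians, compute their filtered Floer homology in two action windows via Theorem \ref{Pozniak}, exclude orbits near $\parbar{M}$ with \eqref{eq:open condition}, and deduce (iv) from a monotone homotopy and Theorem \ref{CFH}(3)), but your concrete construction has a genuine gap. You take $h_0$ to be a monotone ``cliff'' (a large constant near $p=-\tau$, a smaller constant near $p=R+\tau$) and then extend $H_0$ ``by a large constant on the component(s) of $M\setminus\Psi(W')$ adjacent to $X$ and by a smaller constant on those adjacent to $Y$''. This presupposes that the embedded annulus $\Psi(W')$ separates $M$, i.e.\ that its two ends meet different components of the complement. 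Theorem \ref{general statement} makes no such assumption, and in the paper's main applications it fails: for a non-separating loop $l_0$ on a surface of genus $\ge 2$ (or on a surface with boundary), $M\setminus\Psi(W')$ is connected, so a function which is locally constant off $\Psi(W')$ takes a single value there and your $H_0,H_1$ simply do not exist (letting them vary off the annulus instead would create uncontrolled $\alpha$-orbits outside $\Psi(W')$). The paper's profiles are designed exactly to avoid this: $h_0=f_{m_0,S_0,\eone}$ is a \emph{well}, equal to the same constant $S_0$ at both ends of $W'$ and dipping to $m_0=S_Y$ at $p=R$, while $h_1=g_{m_1,S_1,\eone}$ is a \emph{bump}, equal to $m_1$ at both ends and rising to $S_1=m_X$ at $p=0$; each extends over all of $M\setminus\Psi(W')$ by one constant, separating or not, and each still has exactly two slope-$(-r)$ Morse--Bott families (on the descending wall of the well for $H_0$, of the bump for $H_1$), interleaved in the two windows.

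A second, smaller point: the step you yourself flag as ``the hard part'' --- a monotone family $h_s$ whose slope-$(-r)$ levels stay transverse with actions trapped in $(a,b)$ and $(b,c)$, and with no other orbit of action $a,b,c$ --- is asserted rather than produced, and it is not automatic: a naive linear interpolation between your two cliffs has, at intermediate $s$, descending walls near both $X$ and $Y$, so slope-$(-r)$ orbits are created and destroyed in birth--death pairs and their actions must be tracked past $a,b,c$. The paper handles this with an explicit two-stage homotopy: $H_{s/2}$ slides the wall of $H_0$ from $p=R$ over to the band $p\in[0,\eone]$, and $H_{1/2+s/2}$ interpolates linearly to $H_1$; the normalization $S_0-m_0=S_1-m_1$ built into \eqref{def:m0S0}--\eqref{def:m1S1} makes the moving wall congruent to that of $h_1$, so the periodic orbits sit at the fixed levels $p=s^0,s^1$ for all $s$ and their actions vary monotonically inside the windows fixed in \eqref{def:a}--\eqref{def:c}, which is precisely condition (3.b) of Theorem \ref{CFH}(3). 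To close your argument you would need to replace the cliff profiles by such end-matching ones and supply an analogous explicit interpolation.
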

	Let us confirm that Theorem \ref{general statement} is established if we prove this proposition. By Theorem \ref{CFH} (i) and (ii), we have a commutative diagram
	\[
	\xymatrix{
		\HF{H_0} \ar[rd]^{\sigma_0}\ar[rr]^{\sigma_{H_1 H_0}} & & \HF{H_1}\\
		&\HF{H}\ar[ru]^{\sigma_1},&						
	}
	\]
	where $ \sigma_0=\sigma_{H H_0} $  and $ \sigma_1= \sigma_{H_1 H} $.
	By (iv), $ \sigma_0 $ is injective. Therefore there is a $ \Zbb/2 $-vector space $ \mathcal{V} $ and the isomorphisms
	\[ \HF{H}\simeq\HF{H_1}\oplus \mathcal{V}\simeq H_*(W;\Zbb/2)\oplus \mathcal{V} \] 
	hold. Here, we used (iii) in the second isomorphism. In the same way, $ \HFp{H} $ has a direct summand isomorphic to $ H_*(W;\Zbb/2) $.
	By Theorem \ref{Floer_fundamental}, we have $ x ,x'\in P(H;\alpha) $ with $ a<\A_H(x)<b<\A_H(x')<b $. This inequality shows $ x\ne x' $, which gives $ \sharp P(H;\alpha)\ge2 $. 
	
	Now, suppose that $ H $ is non-degenerate. Then again by Theorem \ref{Floer_fundamental}, $ \sharp P^\halfopen{a,b}(H;\alpha)\ge \dim_{\Zbb/2}H_*(W;\Zbb/2) $ and $ \sharp P^\halfopen{b,c}(H;\alpha)\ge \dim_{\Zbb/2}H_*(W;\Zbb/2) $ hold.
	Therefore 
\begin{align*}
	\sharp P(H;\alpha)&\ge \sharp P^\halfopen{a,b}(H;\alpha)+\sharp P^\halfopen{b,c}(H;\alpha)\\
	&\ge2\dim_{\Zbb/2}H_*(W;\Zbb/2)=2b(W).\qedhere
\end{align*}
\end{proof}
	
\subsection{The definitions of $ H_0$ and $H_1 $}
We first construct Hamiltonian functions $ H_0 $ and $ H_1 $ on $ M $. One can easily prove the following lemma:
\begin{lem}
	There exists $ \eone $ with $\tau> \eone>0 $ such that 	
	\begin{align}\label{eq:eone}
	m_X-S_Y>C,
	\end{align}
	where
	\begin{align}
		m_X:=\inf_{S^1 \times \Tbb^1\times (-\eone,\eone)\times N} H, \quad 
		S_Y:=\sup_{S^1 \times \Tbb^1\times (R-\eone,R+\eone)\times N} H.
	\end{align}
\end{lem}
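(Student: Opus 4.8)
The plan is to deduce the lemma from continuity of $H$ together with compactness, exploiting the fact that $S^1\times X$ and $S^1\times Y$ are ``thin slices'' of the thickened domains defining $m_X$ and $S_Y$, and that the inequality $\inf_{S^1\times X}H-\sup_{S^1\times Y}H>C$ assumed in Theorem~\ref{general statement} is \emph{strict}.

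First I would transport everything to $W'$ via the (extended) embedding $\Psi$. Writing $h:=H\circ(\mathrm{id}_{S^1}\times\Psi)$, a smooth function on $S^1\times W'$, the quantities of interest become
\[
m_X(\eone)=\inf_{S^1\times\Tbb^1\times(-\eone,\eone)\times N}h,\qquad
S_Y(\eone)=\sup_{S^1\times\Tbb^1\times(R-\eone,R+\eone)\times N}h,
\]
while $\inf_{S^1\times X}H=\inf_{S^1\times\Tbb^1\times\{0\}\times N}h$ and $\sup_{S^1\times Y}H=\sup_{S^1\times\Tbb^1\times\{R\}\times N}h$. This uses that $\Psi$ is defined on $W'=\Tbb^1\times[-\tau,R+\tau]\times N$, as arranged right after the statement of Theorem~\ref{general statement}, so that the thickened domains make sense for all $0<\eone<\tau$.

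Next I would prove that $m_X(\eone)\to\inf_{S^1\times X}H$ and $S_Y(\eone)\to\sup_{S^1\times Y}H$ as $\eone\to0^+$. Consider the closed slabs $A_\eone:=S^1\times\Tbb^1\times[-\eone,\eone]\times N$, which are compact. The map $\eone\mapsto\inf_{A_\eone}h$ is monotone non-increasing and bounded above by $\inf_{S^1\times X}H$, hence has a limit as $\eone\to0^+$; this limit equals $\inf_{S^1\times X}H$ by the standard argument, namely a sequence of near-minimizers in $A_{1/n}$ has, by compactness of $A_{\tau/2}$, a subsequential limit lying in $\bigcap_n A_{1/n}=S^1\times\Tbb^1\times\{0\}\times N$, and continuity of $h$ forbids that limit from undershooting $\inf_{S^1\times X}H$. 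Since $\inf_{A_\eone}h\le m_X(\eone)\le\inf_{S^1\times X}H$, a squeeze gives $m_X(\eone)\to\inf_{S^1\times X}H$; the argument for $S_Y(\eone)$ is symmetric, using the slabs around $R$. (If one prefers not to assume $N$ compact, intersect the slabs with $\supp H$, which is compact; note $\sup_{S^1\times Y}H<0$ by hypothesis, so the region where $H$ vanishes is irrelevant.)

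Finally, from $m_X(\eone)-S_Y(\eone)\to\inf_{S^1\times X}H-\sup_{S^1\times Y}H>C$ I would conclude that $m_X(\eone)-S_Y(\eone)>C$ for all sufficiently small $\eone\in(0,\tau)$, and pick any such $\eone$. There is no genuine obstacle here; the only points needing a moment's care are that $\Psi$ has been extended to a neighborhood so the thickenings are defined, and, in case $N$ is noncompact, the harmless truncation by $\supp H$ just mentioned.
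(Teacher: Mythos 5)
Your argument is correct and is exactly the routine continuity--compactness argument the paper leaves implicit (it states the lemma with ``one can easily prove'' and gives no proof): the strict inequality $\inf_{S^1\times X}H-\sup_{S^1\times Y}H>C$ survives a small thickening because the inf/sup over the closed slabs converge monotonically to the values on the slices as $\eone\to 0^+$. The only quibble is your parenthetical aside for noncompact $N$: the hypothesis does not give $\sup_{S^1\times Y}H<0$ in general, but this is immaterial since in the paper's setting $N$ is closed (as is needed anyway for $X,Y$ to be compact and for the Morse--Bott argument), so the main case of your proof is all that is required.
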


\begin{df}\label{def:mu}
 	We choose a function $ \mu \in \Cinf(\Rbb) $ satisfying the following conditions:
 	\begin{enumerate}[$ \bullet $]
 		\item $ \mu(x)=\begin{cases}
 		1 & (x\le 0),\\
 		0 & (1\le x),
 		\end{cases} $
 		\item $ \mu'(x) \le 0 $,
 		\item $ \mu''(x) \begin{cases}
 		<0 &(0<x<1/2),\\
 		=0 & (x=1/2),\\
 		>0 & (1/2<x<1).
 		\end{cases}$
 		\item $ \mu(x)=1-\mu(1-x) $.
 	\end{enumerate}
\end{df}
\begin{rem}
 	For example, we can define $ \mu $ by
 	\[ \mu(x)=\dfrac{\phi(x)}{\phi(x)+\phi(1-x)} ,\quad \phi(x)=\begin{cases}
 	0& (x\le 0),\\
 	e^{-1/x} & (0<x).
 	\end{cases} \]
\end{rem}
\begin{figure}
	\centering
	\def \svgwidth{0.5\columnwidth}
	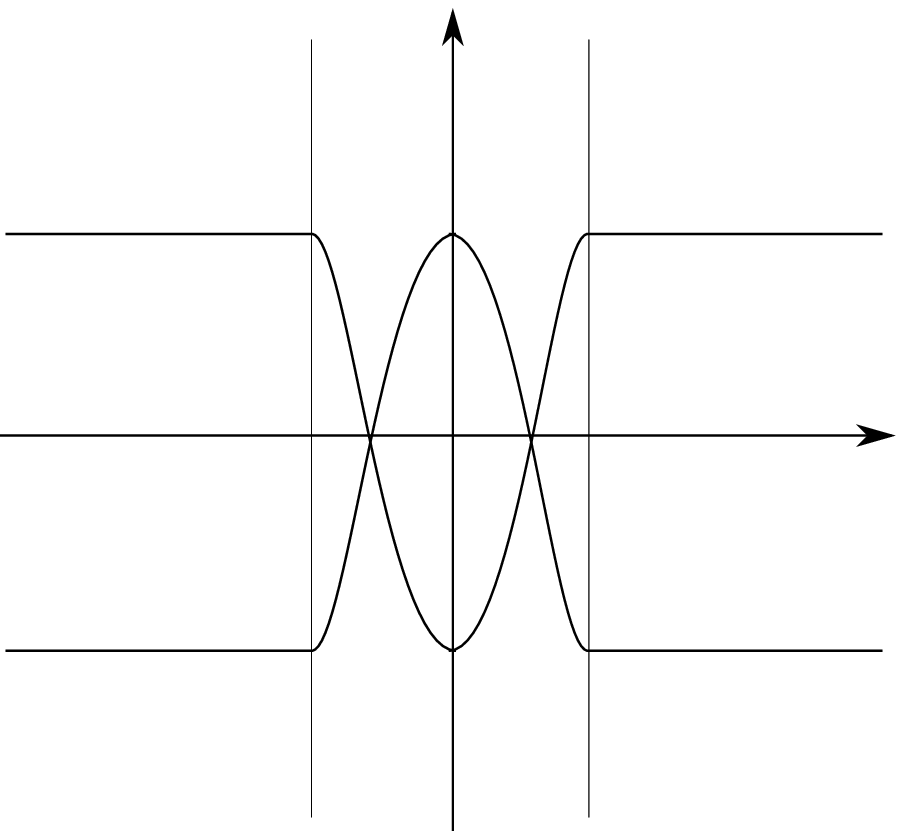
	\caption{graphs of $ f_{s,S,\epsilon} $ and $ g_{s,S,\epsilon} $ }
	\label{fg:f,g}
\end{figure}
\begin{df}
	For $ m,S,\epsilon\in\Rbb $ with $ m<S$ and $\epsilon>0 $, define functions $ g_{m,S,\epsilon},f_{m,S,\epsilon}\in\Cinf(\Rbb) $ by the following equalities (see also Figure \ref{fg:f,g})
	\begin{align}\label{def:g}
		g_{m,S,\epsilon}(x)&=(S-m)\mu({|x|}/{\epsilon})+m,\\
		f_{m,S,\epsilon}&=-g_{-S,-m,\epsilon} .\label{def:f}
	\end{align}
\end{df}
Set $ m_0,S_0,m_1$ and $S_1 $ to be 
\begin{align}\label{def:m0S0}
	m_0=S_Y ,\quad  S_0=\max\left\{\sup_M H,-\inf_M H+S_Y+m_X \right \} , \\
	\label{def:m1S1}
	  m_1=\min\left\{ \inf_M H,-\sup_M H+S_Y+m_X\right\},\quad S_1=m_X. 
\end{align}
Note that with these definitions we have
\begin{align}\label{eq:h0h1symmetry}
	S_1-m_1=S_0-m_0.
\end{align}
Note also that an easy computation shows the following estimates:
\begin{lem}\label{lem:easy estimates for m0m1S0S1}\leavevmode
	\begin{enumerate}
		\item $ S_1\le r(R-\eone)+S_0  $.
		\item $ m_1<m_0 $
		\item $ r\eone+m_1\le C+m_0 $.
		\item $ S_1\le S_0 $
		\item $  S_0\le 3S_H$.
		\item $ 3m_H \le m_1$.
	\end{enumerate}
\end{lem}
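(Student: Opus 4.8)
All six inequalities are elementary, and the plan is to read each one off directly from the definitions \eqref{def:m0S0} and \eqref{def:m1S1}. The only inputs needed are: (a)~the defining inequality \eqref{eq:eone}, i.e.\ $m_X-S_Y>C$; (b)~that $m_X$ and $S_Y$ are, respectively, an infimum and a supremum of $H$ over non-empty subsets of $S^1\times M$, so that $\inf_{S^1\times M}H\le m_X$ and $S_Y\le\sup_{S^1\times M}H$, and hence also $m_X\le\sup_M H$ and $\inf_M H\le S_Y$; (c)~$0<\eone<\tau<R$ together with $r\ge 1$, so that $0<r\eone<rR=C$ and $r(R-\eone)>0$; and, for (5) and (6) only, (d)~the defining bounds $S_H\ge\max\{\sup_M H,\,-\inf_M H\}$ and $m_H\le\min\{\inf_M H,\,-\sup_M H\}$ (in particular $S_H\ge 0\ge m_H$).

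With these in hand the verifications are immediate. For (1) and (4): by (b), $S_1=m_X\le\sup_M H\le S_0$, which is (4), and adding $r(R-\eone)>0$ yields (1). For (2): by (b), $-\sup_M H+m_X\le 0$, hence $m_1\le -\sup_M H+S_Y+m_X\le S_Y=m_0$. For (3): by (b), $m_1\le\inf_M H\le S_Y$, and $r\eone<C$, so $r\eone+m_1<C+S_Y=C+m_0$. Items (5) and (6) are obtained by bounding separately the two arguments of the $\max$ (resp.\ $\min$): using (b) and (d), $\sup_M H\le S_H\le 3S_H$ and $-\inf_M H+S_Y+m_X\le S_H+S_H+S_H=3S_H$, which gives $S_0\le 3S_H$; dually $\inf_M H\ge m_H\ge 3m_H$ and $-\sup_M H+S_Y+m_X\ge m_H+m_H+m_H=3m_H$, which gives $3m_H\le m_1$.

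There is no substantive obstacle: the lemma is pure bookkeeping, and the only points calling for a word of care are keeping track of which branch of each $\max$/$\min$ is being estimated and the strict inequality in (2) — for the latter, equality would force $H$ to be constant on entire neighbourhoods of both $X$ and $Y$, which may be excluded by an arbitrarily small perturbation (and in any case the non-strict form of (2) already suffices for the subsequent construction of $H_0$ and $H_1$). The one genuinely non-trivial input being used here, namely that $\eone$ can be chosen small enough that \eqref{eq:eone} holds, is exactly the content of the preceding lemma and is taken as given.
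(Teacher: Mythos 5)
Your proof is correct and follows essentially the same route as the paper: each item is read off directly from the definitions \eqref{def:m0S0}--\eqref{def:m1S1} using $m_X-S_Y>C$, $0<\eone<\tau<R$, and bounds of the form $S_H\ge\sup|H|$, $m_H\le-\sup|H|$, which is exactly what the paper's one-line proofs of (5) and (6) implicitly assume (the paper never defines $S_H,m_H$, so making (d) explicit is a point in your favour). The only divergence is item (2), where you establish $m_1\le m_0$ and relegate strictness to a degenerate-case remark; note that the paper's own chain ``$m_1\le\inf H\le S_Y<m_0$'' is no stronger, since $m_0=S_Y$ by definition, and, as you observe, the non-strict inequality is all that the subsequent action estimates actually use.
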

\begin{proof}\hspace{\columnwidth}
	\begin{enumerate}
		\item $ S_1\le \sup H\le S_0\le r(R-\eone)+S_0$.
		\item $ m_1\le \inf H\le S_Y <m_0$.
		\item Since $ \eone\le R $  and $ m_1<m_0 $, we have  $ r\eone+m_1\le C+m_0 $.
		\item $ S_1\le\sup H\le S_0 $.
		\item $ S_0=\max\{\sup_M H,-\inf_M H+S_Y+m_X\}\le\max\{S_H,3S_H\}=3S_H $.
		\item $  3m_H=\min\{m_H,3m_H \} \le \min\{ \inf_M H,-\sup_M H+S_Y+m_X \}=m_1$.\qedhere
	\end{enumerate}
\end{proof}\vspace{-\baselineskip}
Choose a function  $ \rho\in\Cinfc(M) $ satisfying $ \rho|_{M\setminus U}\equiv 1 $ and  $ 0\le \rho\le 1 $.
Put $ h_0=f_{m_0,S_0,\eone}$ and $ h_1=g_{m_1,S_1,\eone} $ and  define $ H_0,H_1\in\Hcal(M) $ by
\begin{align*}
	H_0(x)&=\begin{cases}
		h_0(p-R)&(x=\Psi(q,p,y)\in \Psi(W')), \\
		S_0\ \rho(x)&(x\in U),\\
		S_0 & \text{otherwise},
	\end{cases} \\
	H_1(x)&=\begin{cases}
		h_1(p) &(x=\Psi(q,p,y)\in \Psi(W')),\\
		m_1\ \rho(x)&(x\in U),\\
		m_1 & \text{otherwise}.
	\end{cases}
\end{align*} 

Fix constants $ a,b,c\in \Rbb $ which satisfies the following conditions:
\begin{align}
	m_1-1&<a<m_1\label{def:a},\\
	C+m_0&<b<S_1 \label{def:b},\\
	C+S_0&<c<C+S_0+1 \label{def:c},\\
	 a,b,c &\notin\Spec(H;\alpha)\label{def:spec_ab}.
\end{align}
One can confirm that $ a,b $ and $ c $ are well defined by Theorem \ref{Oh} and the fact $ S_1-C-m_0=m_X-C-S_Y\stackrel{\eqref{eq:eone}}{>}0 $. 

Define constants $ c_H $ and $ c_H' $ by
\begin{align}\label{def:cH}
c_H=-3S_H+3m_H-1,&& c_H'=-3m_H+C+3S_H+1.
\end{align}

\subsection{The 1-periodic orbits for $ H_0 $ and $ H_1 $ in class $ \alpha $} 
First, let us examine the 1-periodic orbits for $ H_0 $ contained in $ \Psi(W') $, or the ones for $ \Psi^*H_0 $. If we denote a point in $ W' $ by $ (q,p,y) $, the Hamiltonian equation  is
\[ \begin{cases}
\dot{q}=h_0'(p-R),\\
\dot{p}=0,\\
\dot{y}=0.
\end{cases} \]  
By the second and third equation, $ p=\mathrm{const}=p^0 \in [-\tau,R+\tau] $ and $ y=\mathrm{const}=y^0\in N $. Then 
\[ q(t)=q^0+h_0'(p^0-R)t. \]
Since $ q $ is a loop homotopic to the map $ S^1\ni t\mapsto -rt\in\Tbb^1  $, then we have $ h_0'(p^0-R)=-r $. This implies $ p^0=R-s^i\  (i=0,1)$, where $ 0<s^0<s^1<\eone $ are two solutions of $ h_0'(x)=r$  $( \Leftrightarrow h'_0(-x)=-r ) $.  With the notation $ x_0^i=(q^0-r t,R-s^i,y^0) $, we have 
\[  \A_{H_0}(x_0^i)=h_0(-s^i)-r(R-s^i). \]
Note that the value of the action functional is equal to the $ y $-intercept of the tangential line of the graph $ y=h_0(x-R) $ at $ x=R-s^i $. By comparing the graph of $ y=h_0(x-R) $ with $l_1: y=-r(x-R)+S_0 $, we have $ \A_{H_0}(x_0^1)<c $ (see Figure \ref{fg:h_0_estimate}). Similarly, by comparing the graph of $ y=h_0(x-R) $  with $l_2:y=-r(x-(R-\eone))+S_0 $, $l_3: y=-r(x-R)+m_0$ and $ l_4: y=-rx+m_0 $ and using the convexity of $ h_0 $, we have $ a<\A_{H_0}(x_0^0)<b $ and $ b<\A_{H_0}(x_0^1)<c  $. We used Lemma \ref{lem:easy estimates for m0m1S0S1} (1) and (2) in the estimates.
\begin{figure}
	\centering
	\def \svgwidth{0.7\columnwidth}
	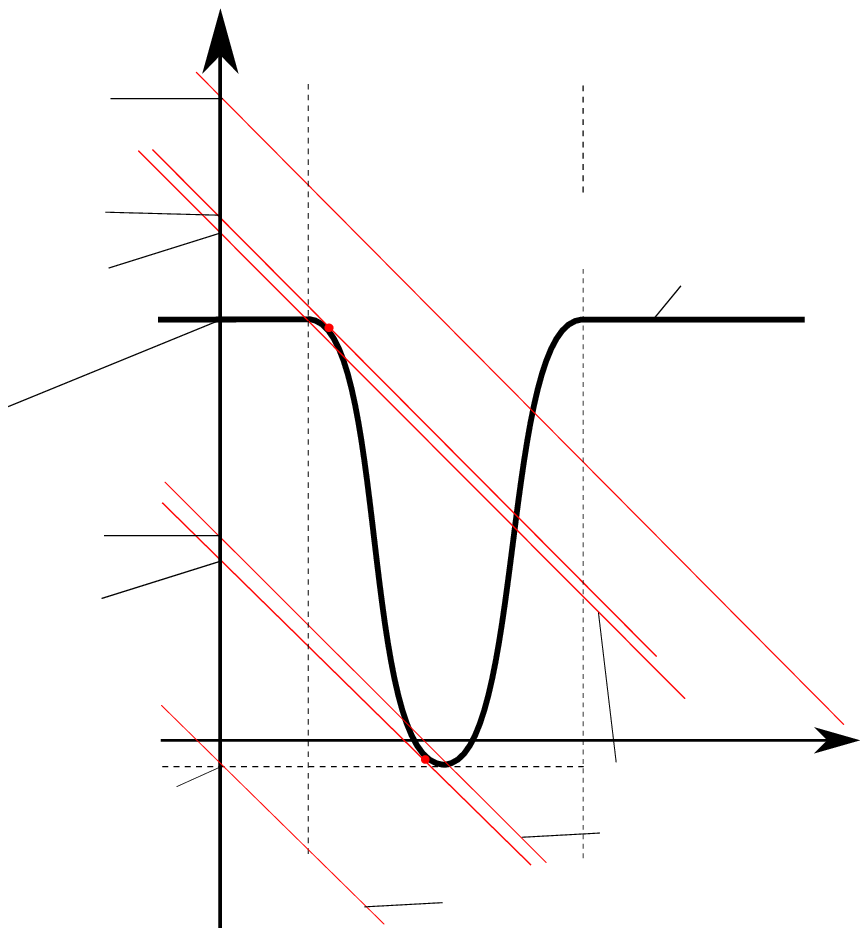
	\caption{The graphs of $h_0,l_1,l_2,l_3$ and $l_4$}
	\label{fg:h_0_estimate}
	\vspace{1em}
	\def \svgwidth{0.7\columnwidth}
	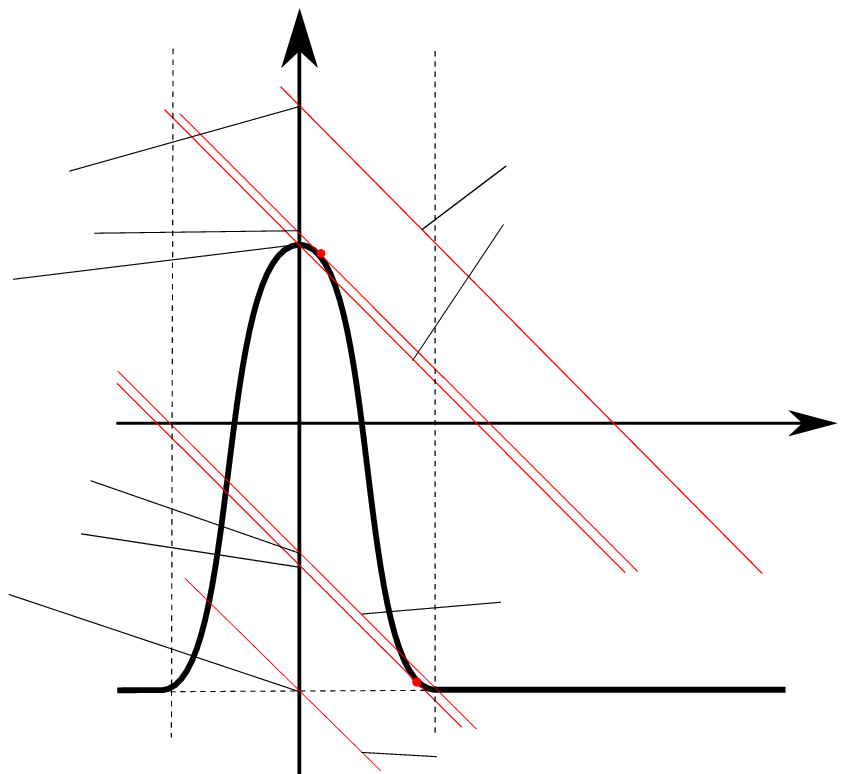
	\caption{The graphs of $h_1,l_1',l_2',l_3'$ and $l_4'$}
	\label{fg:h_1_estimate}
\end{figure}

Next, we consider the 1-periodic orbits for $ H_1 $ contained in $ \Psi(W') $. The Hamiltonian equation  is 
\[ \begin{cases}
\dot{q}=h_1'(p)  ,\\
\dot{p}=0,\\
\dot{y}=0.
\end{cases} \]  
By the second and third equation, $ p=\mathrm{const}=p^0 \in [-\tau,R+\tau] $ and $ y=\mathrm{const}=y^0\in N $. Then 
\[ q(t)=q^0+h_1'(p^0)t \]
Since $ q $ is homotopic to $ t\mapsto -rt $, we have $ h_1'(p^0)=-r $. Then $ p^0=s^i\  (i=0,1)$. Note that $ s^0$ and $s^1 $ are two solutions of $ h_1'(x)=-r$ by \eqref{eq:h0h1symmetry}.  With the notation $ x_1^i=(q^0-r t,s^i,y^0) $, we have 
\[  \A_{H_1}(x_1^i)=h_1(s^i)-rs^i \]
Note that the value of the action functional is equal to the $ y $-intercept of the tangential line of the graph $ y=h_1(x) $ at $ x=s^i $. Then by comparing the graph of $ y=h_1(x) $ with $l_1': y=-r(x-\eone)+S_1 $, $ l_2': y=-rx+S_1 $, $ l_3':y=-r(x-\eone)+m_1 $ and $ l_4':y=m_1 $, we have $ a<\A_{H_1}(x_1^1)<b $ and $ b<\A_{H_1}(x_1^0)<c $ (see Figure \ref{fg:h_1_estimate}). We used Lemma \ref{lem:easy estimates for m0m1S0S1} (3) and (4) in the estimates.

Finally, we examine the 1-periodic orbits for $ H_i $ contained in $ M\setminus \Psi(W') $. Since $ H_i $ is constant on $ M\setminus(\Psi(W')\cup U) $, every orbit contained in this subset is a constant loop, hence we only need to consider the orbits in $ U $. 
\begin{lem}\label{lem:boundary estimate}
	Suppose that $ G\in \Hcal(M) $ satisfies $ m_1\le G|_{S^1\times U}\le S_0 $. Then for every $ x\in L_\alpha M $ contained in $ U $, we have $ \A_G(x)\notin [a,c] $.
\end{lem}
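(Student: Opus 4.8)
The plan is to reduce the statement to the dichotomy furnished by condition \eqref{eq:open condition} and then bound the two contributions to the action functional separately. If $ M $ is closed there is no boundary neighbourhood $ U $ and the claim is vacuous, so I would assume $ M $ is open throughout.

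First I would fix a loop $ x\in L_\alpha M $ contained in $ U $ and choose a smooth homotopy $ u:S^1\times[0,1]\to \overline{M} $, viewed as a path in $ L_\alpha\overline{M} $, from the reference loop $ z $ to $ x $; such a homotopy exists since $ [x]=\alpha=[z] $, and because $ x $ is contained in $ U $ its endpoint satisfies $ u(1)\sub U $. Applying the version of \eqref{eq:open condition} that we are allowed to assume for paths ending in $ U $ gives
\[ -\int u^*\omega < c_H \qquad\text{or}\qquad -\int u^*\omega > c_H'. \]
Independently, from $ m_1\le G|_{S^1\times U}\le S_0 $ and $ \im(x)\sub U $ I get $ m_1\le \int_0^1 G_t(x(t))\,dt\le S_0 $. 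Since $ \A_G(x)=\int_0^1 G_t(x(t))\,dt-\int u^*\omega $, combining these two bounds handles the two cases.

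In the first case $ \A_G(x)<S_0+c_H $. By Lemma \ref{lem:easy estimates for m0m1S0S1}(5) one has $ S_0\le 3S_H $, so using the definition \eqref{def:cH} of $ c_H $ this is at most $ 3S_H+(-3S_H+3m_H-1)=3m_H-1 $, which by Lemma \ref{lem:easy estimates for m0m1S0S1}(6) is $ \le m_1-1 $, hence $ <a $ by \eqref{def:a}; thus $ \A_G(x)<a $. In the second case $ \A_G(x)>m_1+c_H' $. By Lemma \ref{lem:easy estimates for m0m1S0S1}(6) one has $ 3m_H\le m_1 $, so using \eqref{def:cH} this is at least $ 3m_H+(-3m_H+C+3S_H+1)=C+3S_H+1 $, which by Lemma \ref{lem:easy estimates for m0m1S0S1}(5) is $ \ge C+S_0+1 $, hence $ >c $ by \eqref{def:c}; thus $ \A_G(x)>c $. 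In either case $ \A_G(x)\notin[a,c] $, as desired.

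I do not expect a genuine obstacle: this lemma is precisely the reason the constants $ c_H $ and $ c_H' $ in \eqref{def:cH} were chosen the way they were, and the argument is the bookkeeping that lines them up with $ a $, $ c $ and the elementary estimates of Lemma \ref{lem:easy estimates for m0m1S0S1}. The one point to be careful about is that the homotopy $ u $ used to evaluate $ \A_G(x) $ is a legitimate test path in \eqref{eq:open condition}; here the $ \alpha $-atoroidal hypothesis (which makes $ \A_G $ well defined) guarantees that $ -\int u^*\omega $ does not depend on the choice of $ u $, so the dichotomy applies unambiguously.
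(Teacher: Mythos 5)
Your proof is correct and follows essentially the same route as the paper: invoke the extended form of \eqref{eq:open condition} for paths ending in $U$, bound $\int_0^1 G_t(x(t))\,dt$ between $m_1$ and $S_0$, and use Lemma \ref{lem:easy estimates for m0m1S0S1} (5), (6) together with \eqref{def:cH}, \eqref{def:a}, \eqref{def:c} to push $\A_G(x)$ below $a$ or above $c$ in the two cases. Your extra remarks on the existence of the test homotopy and on the well-definedness of $-\int u^*\omega$ are consistent with the paper's setup and introduce no gap.
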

\begin{proof}
If $ -\int u^*\omega<c_H $,
\begin{align*}
\A_{G}(x)&= \int_0^1 G(x(t))dt- \int u^*\omega, \\
&< S_0 + c_H\\
&= S_0 -3S_H+ 3m_H-1,\\
&\le m_1-1<a\qquad\text{by Lemma \ref{lem:easy estimates for m0m1S0S1} (5) and (6).}
\end{align*}
If $ -\int u^*\omega>c'_H $,
\begin{align*}
\A_{G}(x)
&> m_1+c_H' \\
&= m_1-3m_H+C+3S_H+1\\
&\ge C+S_0+1>c\qquad \text{by Lemma \ref{lem:easy estimates for m0m1S0S1} (5) and (6).} \qedhere
\end{align*}
\end{proof}
Note that $ H_i $ satisfies the condition of the above lemma and hence we have $ \A_{H_i}(x)\notin [a,c] $ for every loop $ x$ contained in $ U $.

Summarizing the above argument, we have the following proposition: 
\begin{prp}\label{estimate_actionfunctional}
	Every 1-periodic orbit $ x\in P(H_i;\alpha) $ with $ a<\A_{H_i}(x)<c $ is contained in $ \Psi(W') $ and written as
	\begin{align*}
		x_0^j&=(q^0-r t, R-s^j,y^0) &(i=0),\\
		x_1^j&=(q^0-r t, s^j,y^0)& (i=1),
	\end{align*} 
	where $ q^0\in \Tbb^1,y^0\in N$ and  $ s^j\ (j=0,1) $ are the solutions of the equation $ h_1'(x)=-r $.
	In addition, the values of the action functional can be estimated as
	\begin{align*}
		a<\A_{H_0}(x_0^0)<b,\qquad b<\A_{H_0}(x_0^1)&<c,\\
		a<\A_{H_1}(x_1^1)<b,\qquad b<\A_{H_1}(x_1^0)&<c.
	\end{align*} 
\end{prp}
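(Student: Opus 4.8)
The plan is to assemble Proposition \ref{estimate_actionfunctional} from the three local computations that precede it, organized as a case analysis on where a $1$-periodic orbit in class $\alpha$ with action in $(a,c)$ can sit. First I would treat orbits contained in $\Psi(W')$. There $\Psi^*H_i$ depends only on the $p$-coordinate, so the Hamilton equations immediately give $\dot p=0$ and $\dot y=0$; hence $p\equiv p^0$, $y\equiv y^0$, and $q(t)=q^0+h_i'(p^0)t$ (with the argument shifted by $R$ when $i=0$). The constraint $[x]=\alpha$, together with the hypotheses on $\Psi_*$ and on the reference loop $z$, forces the winding number of $q$ to be $-r$, i.e. $h_0'(p^0-R)=-r$ respectively $h_1'(p^0)=-r$; in particular the $x_i^j$ found this way do lie in class $\alpha$. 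Because $\mu$ is built (Definition \ref{def:mu}) so that $h_0$ and $h_1$ are strictly convex on the relevant interval, and by the normalization \eqref{eq:h0h1symmetry} their derivatives coincide, each of these equations has exactly the two roots $0<s^0<s^1<\eone$, producing precisely the orbits $x_0^j,x_1^j$ listed in the statement.

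Next I would compute the action values. Over the obvious cylinder in $\Psi(W')$ joining $z$ to $x_i^j$, the $\omega_N$-part of $\int u^*\omega$ vanishes because $N$ is aspherical, and the $dp\wedge dq$-part reduces to a one-variable integral; carrying this out identifies $\A_{H_i}(x_i^j)$ with the $y$-axis intercept of the tangent line to the graph of the profile function at the $p$-coordinate of the orbit. Comparing these tangent lines with the auxiliary lines $l_1,\dots,l_4$ (for $H_0$) and $l_1',\dots,l_4'$ (for $H_1$), and feeding in Lemma \ref{lem:easy estimates for m0m1S0S1}(1)--(4) together with convexity of $h_0,h_1$, yields the strict inequalities
\[ a<\A_{H_0}(x_0^0)<b<\A_{H_0}(x_0^1)<c,\qquad a<\A_{H_1}(x_1^1)<b<\A_{H_1}(x_1^0)<c, \]
which is exactly what is asserted.

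Finally I would dispose of the orbits not contained in $\Psi(W')$. Such an orbit has image in $U\cup\big(M\setminus(\Psi(W')\cup U)\big)$, and $X_{H_i}$ vanishes identically on the latter open set; by uniqueness of solutions of the Hamilton equation, an orbit meeting that set is constant, hence contractible, contradicting $[x]=\alpha\neq 0$. So any remaining orbit in class $\alpha$ has image in $U$, where $m_1\le H_i\le S_0$, and Lemma \ref{lem:boundary estimate} gives $\A_{H_i}(x)\notin[a,c]$, excluding it. Combining the three cases proves the proposition.

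I expect the genuine work to be the bookkeeping in the middle step rather than anything conceptual: one must check that the auxiliary lines are positioned so that each required inequality follows from a single convexity comparison paired with the correct item of Lemma \ref{lem:easy estimates for m0m1S0S1}, and that the constants $a,b,c$ fixed in \eqref{def:a}--\eqref{def:c} actually separate these action values as claimed — which is where the strict gap $m_X-S_Y>C$ recorded in \eqref{eq:eone} is really used, since it is what makes $b$ fall strictly between the two $H_0$-values and strictly between the two $H_1$-values.
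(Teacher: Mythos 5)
Your proposal is correct and follows essentially the same route as the paper: solve the Hamilton equations on $\Psi(W')$ so that the winding condition forces $h_i'=-r$ with the two roots $0<s^0<s^1<\eone$, identify each action value with the $y$-intercept of the corresponding tangent line and compare it with the auxiliary lines using convexity and Lemma \ref{lem:easy estimates for m0m1S0S1}, and exclude orbits outside $\Psi(W')$ because they are either constant (hence not in class $\alpha$) or contained in $U$, where Lemma \ref{lem:boundary estimate} applies. The only cosmetic nitpicks: the $\omega_N$-contribution over your connecting cylinder vanishes simply because its $N$-component factors through an interval (asphericity of $N$ is not what is used at that point), and by \eqref{eq:h0h1symmetry} one has $h_0'=-h_1'$ rather than equality of derivatives, which is precisely why the two root sets match after the reflection $x\mapsto -x$.
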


\subsection{Proof of Proposition \ref{H0H1_properties}}\label{sec:H0H1_properties}
Now we prove Proposition \ref{H0H1_properties}. First, Proposition \ref{estimate_actionfunctional} implies (i). 

Second, we confirm (ii). By the assumption $ H|_U=0 $ and the definition of $ S_0 $ and $ m_1 $, we have $ H_1\le H\le H_0 $ in $ M\setminus\Psi(W') $. Then we only need to confirm the inequality in $ \Psi(W') $. If $ |p|\le \eone  $, we have $ H_1(q,p,y)\le S_1= m_X\le H(q,p)  $; otherwise $ H_1(q,p,y)=m_1\le H(q,p,y) $. Similarly, if $ |p-R|\le \eone  $, we have $ H_0(q,p,y)\ge m_0= S_Y\ge H(q,p,y)  $; otherwise $ H_0(q,p,y)=S_0 \ge H(q,p,y) $.
Thus, (ii) is proved.

Third, we examine (iii). These isomorphisms are a consequence of the Theorem \ref{Pozniak}. We only prove that $ \HF{H_0}\to\HF{H_1} $ is an isomorphism because the proofs are the same. It suffices to confirm that $ H_i $ satisfies the assumption of the theorem. Observe that $ P^\halfopen{a,b}(H_i;\alpha) $ is diffeomorphic to a closed manifold $ \Tbb^1\times N $ by Proposition \ref{estimate_actionfunctional}. Since this manifold is homotopy equivalent to $ W $, what is left is to show the following proposition:
\begin{prp}
	$ P_i=P^\halfopen{a,b}(H_i;\alpha) $ is a Morse-Bott manifold of periodic orbits.
\end{prp}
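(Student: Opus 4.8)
The plan is to read off $P_i$ explicitly from Proposition \ref{estimate_actionfunctional} and then verify the two defining conditions of a Morse--Bott manifold of periodic orbits by a direct linearization inside $\Psi(W')$. By Proposition \ref{estimate_actionfunctional} the loops in $P_0=P^\halfopen{a,b}(H_0;\alpha)$ are exactly the $x_0^0=(q^0-rt,R-s^0,y^0)$, and those in $P_1=P^\halfopen{a,b}(H_1;\alpha)$ exactly the $x_1^1=(q^0-rt,s^1,y^0)$, both families smoothly parametrized by $(q^0,y^0)\in\Tbb^1\times N$. Writing $p_0:=R-s^0$ and $p_1:=s^1$, the evaluation $x\mapsto x(0)$ identifies $P_i$ with $P_i(0)=\Psi(\Tbb^1\times\{p_i\}\times N)$, which is an embedded submanifold of $M$ diffeomorphic to the closed manifold $\Tbb^1\times N$; this gives the first Morse--Bott condition.

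For the second condition, fix an $\omega$-compatible almost complex structure on $M$ whose metric restricts on $\Psi(W')$ to the product of the flat metric $dq^2+dp^2$ on the annulus and an $\omega_N$-compatible metric on $N$ (such a $J$ exists by the standard extension argument for compatible almost complex structures), so that the Levi--Civita connection $\nabla$ is the associated product connection there, flat in the $(q,p)$-directions. Since each $H_i$ depends only on $p$ on $\Psi(W')$, one has $X_{H_i}=(\partial H_i/\partial p)\,\partial_q$, and every orbit of $P_i$ stays inside $\Psi(W')$; decomposing $\xi\in T_xP_i$ as $\xi=\xi_q\partial_q+\xi_p\partial_p+\xi_N$, the equation $\nabla_t\xi=\nabla_\xi X_{H_i}$ becomes $\dot\xi_q=\bigl(\partial^2H_i/\partial p^2\bigr)(p_i)\,\xi_p$, $\dot\xi_p=0$ and $\nabla_t\xi_N=0$, so periodicity of $\xi$ forces $\bigl(\partial^2H_i/\partial p^2\bigr)(p_i)\,\xi_p=0$. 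Thus everything reduces to the non-vanishing of $\partial^2 H_0/\partial p^2$ at $p_0$ and of $\partial^2 H_1/\partial p^2$ at $p_1$.

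This is the one place where the shape of $\mu$ from Definition \ref{def:mu} is used, and it is the main point of the argument. Using \eqref{eq:h0h1symmetry}, the two solutions $s^0<s^1$ from the preceding discussion both satisfy $\mu'(s^j/\eone)=-r\eone/(S_0-m_0)$; since $\mu'$ is strictly decreasing on $(0,\tfrac{1}{2})$, strictly increasing on $(\tfrac{1}{2},1)$ and has its unique minimum at $\tfrac{1}{2}$, the existence of two distinct solutions forces $0<s^0<\eone/2<s^1<\eone$. Hence $s^0/\eone\in(0,\tfrac{1}{2})$ and $s^1/\eone\in(\tfrac{1}{2},1)$, where $\mu''<0$ resp.\ $\mu''>0$, so $\partial^2 H_0/\partial p^2$ at $p_0$ and $\partial^2 H_1/\partial p^2$ at $p_1$ are nonzero. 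Consequently $\xi_p\equiv0$, and $\xi\mapsto\xi(0)$ maps $T_xP_i=\Rbb\,\partial_q\oplus\{\xi_N:\nabla_t\xi_N=0\}$ isomorphically onto $\Rbb\,\partial_q\oplus T_{y^0}N=T_{x(0)}P_i(0)$, establishing the second condition. The metric bookkeeping above is cosmetic — it could be avoided by noting that $T_xP_i$ is always isomorphic to $\ker\bigl(d\phi_{H_i}^1(x(0))-\mathrm{id}\bigr)$, which is metric-independent — so the only genuine obstacle is locating $s^0$ and $s^1$ on opposite sides of $\eone/2$, i.e.\ checking that the linearized return map degenerates only tangentially to $P_i(0)$ and not in the $p$-direction.
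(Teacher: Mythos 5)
Your proof is correct and takes essentially the same route as the paper: the same product metric on $\Psi(W')$, the same linearized equation $\nabla_t\xi=\nabla_\xi X_{H_i}$ reducing the second Morse--Bott condition to the non-vanishing of $h_i''$ at the orbit's $p$-value, and the same identification of $P_i$ with $\Tbb^1\times N$ via evaluation at $t=0$. The only difference is that you make explicit why $s^0/\eone\in(0,\tfrac{1}{2})$ and $s^1/\eone\in(\tfrac{1}{2},1)$, hence $\mu''\ne 0$ there, a point the paper states only as the assertion $h_i''(s^j)\ne 0$.
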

\begin{proof}
	Fix a Riemann metric $ g $ on $ M $ compatible with $ \omega $ such that $ \Psi^*g=dq\otimes dq +dp\otimes dp+ g_N $ holds for some Riemannian metric $ g_N $ on $ N $ compatible with $ \omega_N $.
	Consider the linearized Hamiltonian differential equation for vector fields $ \xi(t)=(\hat{q}(t),\hat{p}(t),\hat{y}(t) )\in T_{x(t)}M $
	along $ x $:
	\begin{equation}\label{linearized Hamiltonian differential equation}
	\nabla_{\dot{x}}\xi=\nabla_{\xi}X_{H_i}(x),\qquad \xi(0)=\xi(1),
	\end{equation}
	where $ \nabla $ stands for the Levi-Civita connection of $ M $.   We only need to show that the space of 1-periodic solutions of the above equation  has dimension $ \dim P_i= 1+ \dim N $ (cf.  \cite[Remark 5.2.1]{BPS}).
	If $ x\in P_i $, by Proposition \ref{estimate_actionfunctional}, $ \im(x)\sub\Psi(W') $ holds. Hence \eqref{linearized Hamiltonian differential equation} is written as 
	\begin{align*}
	\dot{\hat{q}}&=\begin{cases}
	h_0''(s^0)\hat{p}\quad  (i=0),\\
	h_1''(s^1)\hat{p}\quad  (i=1),
	\end{cases}&
	\dot{\hat{p}}&=0,&
	\dot{\hat{y}}&=0.
	\end{align*}
	Therefore
	\begin{align*}\label{key}
	\hat{q}(t)&=\begin{cases}
	\hat{q}^0+h_0''(s^0)\hat{p}^0 t\quad  (i=0),\\
	\hat{q}^0+h_1''(s^1)\hat{p}^0 t\quad  (i=1),
	\end{cases}&
	\hat{p}(t)&=\hat{p}^0,&
	\hat{y}(t)&=\hat{y}^0,
	\end{align*}
	where $\hat{q}^0\in T_{q^0}\Tbb^1, \hat{p}^0\in T_{p^0}(-\tau,R+\tau) $ and $ \hat{y}^0\in T_{y^0}N $.
	By the periodicity condition on $ \hat{q} $, we have $ h_0''(s^0)\hat{p}^0= h_1''(s^1)\hat{p}^0=0$.
	Since $ h_i''(s^j)\ne 0$, we have $ \hat{p}^0=0 $. This shows that the dimension of the solutions of \eqref{linearized Hamiltonian differential equation} equals to $ \dim P_i $.
\end{proof}

\begin{figure}
	\centering
	\def \svgwidth{0.7\columnwidth}
	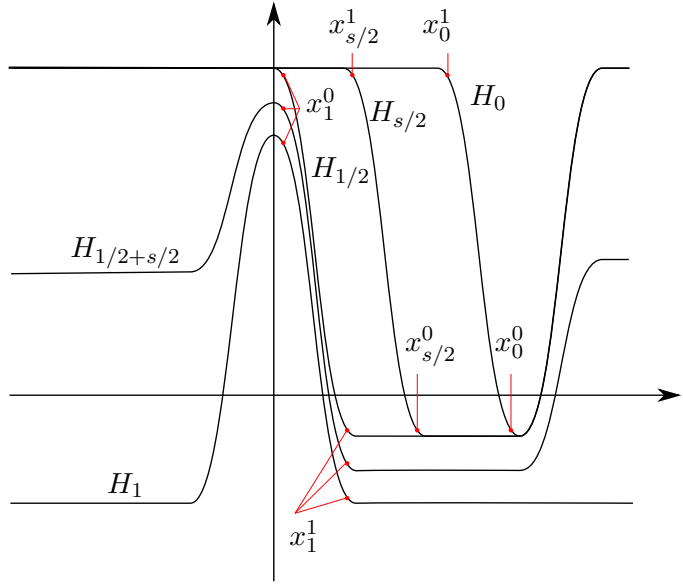
	\caption{The graphs of homotopies and its 1-periodic orbits}
	\label{fg:action_functional_of_homotopy}
\end{figure}
Finally, we prove (iv).
\begin{df}
We define $ H_{s/2} $ and $ H_{1/2+s/2}\in\Hcal(M)$ for $ 0\le s\le1 $ by
\begin{align*}
{H}_{s/2}(x)&=\begin{cases}
h_0((|p-R+sl|-sl)^+)& (x=\Psi(q,p,y)\in \Psi(W')),\\
H_0(x) & otherwise,
\end{cases}\\
H_{1/2+s/2}(x)&=(1-s)H_{1/2}(x)+sH_1(x),
\end{align*} 
where $ l:=(R-\eone)/2 $ and $ x^+:=\max\{x,0 \} $ (see Figure \ref{fg:action_functional_of_homotopy}).
\end{df}
Note that $ (|p-p^0|-l)^+=d(p,B_l(p^0)) $ holds for $ p,p^0\in\Rbb^n $, where $ d $ stands for the distance function and $ B_l(p^0) $ the closed ball with radius $ l $ centered at $ p^0 $.
Our goal is to confirm that  for $ H_{s/2} $ and $ H_{1/2+s/2} $ satisfy the two conditions (3.a) and (3.b) in Theorem \ref{CFH} (3).
The condition (3.a) is obvious.

By Lemma \ref{lem:boundary estimate}, we have $ \A_{H_s}(x)\notin[a,c] $ for every $ x\in P(H_s;\alpha) $ contained in $ U $ and $ 0\le s \le 1 $. In addition, $ H_s $ is constant on $ M\setminus(\Psi(W')\cup U) $. We only need to examine the 1-periodic orbits contained in $ \Psi(W') $ and denote by $ P'(H_s;\alpha) $ the set of such 1-periodic orbits. An easy calculation shows that $ P'(H_s;\alpha) $ is written as
\begin{align}
	P'(H_{s/2};\alpha)=\{x^j_{s/2}:=(q^0-rt,R-2sl-s^j,y^0)\mid q^0\in \Tbb, y^0\in N ,j=1,2\},\\
	P'(H_{1/2+s/2};\alpha)=\{x^j_1=(q^0-rt,s^j,y^0)\mid q^0\in \Tbb^1,y^0\in N ,j=1,2\}.
\end{align}
Note that the fourth condition in Definition \ref{def:mu} implies $ \eone=s^0+s^1 $ and hence $ x^1_{1/2}=x^0_1 $ and $ x^0_{1/2}=x^1_1 $ holds.
The value of the action functional of these orbits are estimated as (see also Figure \ref{fg:action_functional_of_homotopy})
\begin{align}
	a<\A_{H_1}(x^1_1)\le\A_{H_{1/2+s/2}}(x^1_1)\le  \A_{H_{s/2}}(x^0_{s/2})\le \A_{H_0}(x^0_0)<b,\\
	b<\A_{H_1}(x^0_1)\le \A_{H_{1/2+s/2}}(x^0_1)\le  \A_{H_{s/2}}(x^1_{s/2})\le \A_{H_0}(x^1_0)<c.
\end{align}
The first and fifth inequality is by Proposition \ref{estimate_actionfunctional}.
This proves (3.b). Then Theorem \ref{CFH} implies that $ \sigma_{H_1 H_{1/2}}\circ\sigma_{H_{1/2} H_0}=\sigma_{H_1 H_0} $ is an isomorphism,
which completes the proof of (iv).

\section{Proof of Theorem \ref{mainthm}}
In this section, we deduce Theorem \ref{mainthm} from Theorem \ref{general statement}. 
The organization of the proof can be seen in the following flow chart.
\[ \xymatrix{
	&\fbox{Lemma \ref{lem:construct embedding}}\ar[ld]_-{ \text{(I) }g\ne1\text{ and } e=0\ \ \ }\ar[d]^-{ \text{(II) }e\ne 0 }\ar[rd]^-{ \text{ (III) }\Sigma=\Tbb^2 }& \\
	\fbox{Theorem \ref{general statement}}\ar[d]& \fbox{Constructing $ \hat{\Sigma}_\rho $}\ar[d] & \fbox{Taking a covering}\ar[d]	\\
	Q.E.D.& \fbox{Theorem \ref{general statement}}\ar[d] & \fbox{Lemma \ref{general statement}}\ar[d]\\
	& Q.E.D.& \fbox{Theorem \ref{lem:calc_AH2}}\ar[d] \\
	&&Q.E.D.
}
\]
The proof of the case $ \Sigma=\Tbb^2 $ is more difficult than the others, because $ \Tbb^2 $ is not $\alpha  $-atoroidal.
The proof is divided into three parts: (I) the case $ g\ne1 $ and $ e=0 $; (II) the case $ e\ne0 $; and (III) the case $ \Sigma=\Tbb^2 $.

First, we prove a differential topological lemma.
\begin{lem}\label{lem:smooth embedding}
	Let $ l_0 $ and $ l_1 $ be disjoint and homotopic non-contractible embedded loops on $ \Sigma $. Then there exists a smooth embedding $ \Phi:S^1\times[0,1]\to \Sigma $ with $ \Phi(t,i)=l_i(t) \ (i=0,1) $.
\end{lem}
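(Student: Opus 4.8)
The plan is to reduce the statement to a standard fact about surfaces: two disjoint, homotopic, non-contractible embedded loops on a surface cobound an embedded annulus. First I would invoke the classification of surfaces together with the fact that the complement of an embedded loop in a surface has a well-understood structure. Concretely, cutting $\Sigma$ along $l_0$ yields a (possibly disconnected) surface $\Sigma'$ with two new boundary circles $l_0^+$ and $l_0^-$; the loop $l_1$, being disjoint from $l_0$, descends to an embedded loop in the interior of $\Sigma'$. Since $l_1$ is homotopic to $l_0$ in $\Sigma$ and $l_0$ is non-contractible, a standard argument (using that $l_1$ is embedded and that in a surface a two-sided simple closed curve is null-homotopic iff it bounds a disk) shows that $l_1$ together with one of the copies of $l_0$ bounds a subsurface of $\Sigma'$; I would argue that this subsurface must in fact be an annulus. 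The cleanest route is: $l_0$ and $l_1$ together separate a region $A \subset \Sigma$; because $l_0 \simeq l_1$ and neither bounds a disk, $A$ is a compact surface with exactly two boundary components and, by an Euler characteristic / $\pi_1$ computation, $\chi(A)=0$ and $A$ is orientable, hence $A \cong S^1 \times [0,1]$.

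Next I would upgrade this topological annulus to a smooth embedding $\Phi: S^1 \times [0,1] \to \Sigma$ with the prescribed boundary behavior $\Phi(\cdot, i) = l_i$. Having identified an embedded closed annular region $A$ with $\partial A = \im(l_0) \sqcup \im(l_1)$, I choose a diffeomorphism $A \cong S^1 \times [0,1]$; a priori this diffeomorphism restricts to \emph{some} parametrization of $l_0$ and $l_1$ on the two boundary circles, not necessarily the given ones. To fix the parametrizations, I would use that any two orientation-preserving diffeomorphisms of $S^1$ are isotopic (the group $\mathrm{Diff}^+(S^1)$ is connected), extend the required reparametrizations from the boundary into a collar using such an isotopy, and precompose. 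One must also check the orientations match up so that $l_0$ and $l_1$ get compatible (rather than opposite) parametrizations — this is where one uses that $l_0$ and $l_1$ are \emph{homotopic} (not just freely homotopic up to orientation), which pins down the direction. After this adjustment the resulting smooth embedding $\Phi$ satisfies $\Phi(t,i) = l_i(t)$.

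The main obstacle I anticipate is the first step: rigorously showing that the region cobounded by $l_0$ and $l_1$ is an annulus rather than some other subsurface, while handling the genuinely different cases $\Sigma = S^2$ (excluded here since the loops are non-contractible), $\Sigma = \Tbb^2$, surfaces with boundary, and higher-genus surfaces uniformly. The potential pitfalls are: the loops might be non-separating individually (so one cannot just "cut along $l_0$" and get something simply analyzable), and a priori $l_0 \cup l_1$ could bound a subsurface on \emph{both} sides. I would resolve this by passing to the cut surface $\Sigma \setminus (\im(l_0) \cup \im(l_1))$, whose components are surfaces with boundary; the homotopy from $l_0$ to $l_1$ together with the fact that $[l_0] \neq 0$ forces one such component $A$ to have $\partial A$ consisting of exactly one copy of $l_0$ and one copy of $l_1$ and to carry no extra topology, i.e.\ $\pi_1(A) \to \pi_1(\Sigma)$ is injective with image generated by $[l_0]$, whence $A$ is an annulus by the classification of surfaces. (This is essentially the content of the "bigon/annulus" criterion for simple closed curves; one may alternatively cite it directly.) Once $A$ is an annulus the smoothing and reparametrization in the previous paragraph are routine, so the topological identification is the crux.
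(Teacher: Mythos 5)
Your proposal is correct in outline but follows a genuinely different route from the paper. You reduce everything to the standard surface-topology fact that two disjoint, homotopic, essential simple closed curves cobound an embedded annulus (which you may legitimately cite, e.g.\ Epstein's theorem or the Farb--Margalit primer), and then fix the boundary parametrizations by a collar isotopy using connectedness of $\mathrm{Diff}^+(S^1)$, checking orientation compatibility via the homotopy $l_0\simeq l_1$. That last step is actually treated more carefully in your write-up than in the paper, which simply says ``the inclusion map gives the one desired.'' The paper, by contrast, proves the annulus fact from scratch: it takes a homotopy $u$ from $l_0$ to $l_1$ transverse to $l_0\cup l_1$, kills the contractible components of $u^{-1}(l_0\cup l_1)$ by a homotopy rel boundary using exactness of $\pi_2(\overline{\Sigma}_1)\to\pi_2(\overline{\Sigma}_1,l_0)\to\pi_1(l_0)$ together with $\pi_2(\overline{\Sigma}_1)=0$, orders the remaining essential preimage circles to extract a sub-annulus of the homotopy whose image lies in the closure $\overline{\Sigma}_k$ of a single complementary component, and finally uses the presentation of $\pi_1(\overline{\Sigma}_k)$ to show that $[l_0]=[l_1]^{\pm1}$ in that group forces $(g_k,e_k)=(0,2)$, i.e.\ $\overline{\Sigma}_k$ is the desired annulus. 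So the trade-off is: your route is shorter and leans on a citable classical theorem; the paper's route is self-contained and is essentially a proof of that theorem adapted to the case at hand.

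One caveat: if you intend to prove rather than cite the annulus criterion, your sketch has its gap exactly at the crux you identify. The assertion that ``the homotopy from $l_0$ to $l_1$ forces a component $A$ with $\partial A$ one copy of each curve and with $\pi_1(A)\to\pi_1(\Sigma)$ having image $\langle[l_0]\rangle$'' (equivalently $\chi(A)=0$) is stated, not derived; nothing in your outline explains why the component carries no extra topology or why the homotopy singles out a component at all, and when $e\neq0$ you would also need to rule out the component being noncompact (meeting an end of $\Sigma$). Some mechanism such as the paper's transversality/innermost-circle argument, or an equivalent covering-space or cut-and-paste argument, is needed to close this. The separation statement itself is fine ($[l_0]=[l_1]$ in $H_1(\Sigma;\Zbb/2)$ makes $l_0\cup l_1$ null-homologous, hence separating), so the only substantive missing piece is the identification of an annulus component.
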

\begin{proof}
	By the transversality theorem, we have a smooth homotopy $ u:S^1\times [0,1]\to \Sigma  $ from $ l_0 $ to $ l_1 $ that is transverse to $ l_0 $ and $ l_1 $. Then the subset $ u^{-1}(l_0\cup l_1) $ is a 1-dimensional submanifold without boundary on $ \Sigma $. Denote each component by $ \gamma_i \ (i=1,\cdots,N) $. Set $ I=I(u)=\{ i\mid \gamma_i \text{ is not contractible}  \} $ and $ J=J(u)=\{ i\mid \gamma_i \text{ is contractible}  \} $. If we cut $ \Sigma $ along $ l_0 $ and $ l_1 $,  then $ \Sigma $ is divided into two or three components. Write $ \Sigma\setminus (l_0\cup l_1)=\Sigma_1\cup \Sigma_2\, (\cup \Sigma_3) $ and  $ \Sigma_k=\Sigma_{g_k,e_k} $. 
	
	We first prove that there is a smooth map $ v:S^1\times[0,1]\to \Sigma $ homotopic to $ u $ relative to the boundary and satisfying $ J(v)=\emptyset $.
	Let $ j\in J $ be a minimal element in the sense that the closed disk $ D_j  $ enclosed by $ \gamma_j $ does not contain any loops $ \gamma_{j'} \ (j'\in J) $ other than $ \gamma_j $. Let $ U_j $ be a contractible neighborhood of $ D_j $ that does not intersect with any loops other than $ \gamma_j $. For simplicity, we assume that $ u(\gamma_j)\sub l_0 $, $ u(D_j)\sub\overline{\Sigma}_1 $ and $ u(U_j\setminus D_j)\sub \Sigma_2 $. By the relative homotopy exact sequence of the pair $ (\overline{\Sigma}_1,l_0) $, we have an exact sequence $ \pi_2(\overline{\Sigma}_1) (=0)\to \pi_2(\overline{\Sigma}_1,l_0)\stackrel{f}{\to} \pi_1(l_0)(=\Zbb) $. Consider the homotopy class $ [u|_{D_j}]\in\pi_2(\overline{\Sigma}_1,l_0) $. Since $ \gamma_j $ is contractible, $ f([u|_{D_j}]) =[u|_{\gamma_j}]=0 $, and hence we have $ [u|_{D_j}]=0 $. This implies that there is a map $ u':S^1\times [0,1]\to \Sigma $ that coincides with $ u $ on $ D_j^c=S^1\times[0,1]\setminus D_j $ and homotopic to $ u $ relative to $ D_j^c $. Considering a tubular neighborhood of $ l_0 $ and using the flow of a vector field near $ l_0 $ oriented toward $ \Sigma_2 $, we can construct a map $ u'' $ equal to $ u' $ on $ U_j^c $; homotopic to $ u' $ relative to $ U_j^c $; and satisfying $ u''(U_j)\cap (l_0\cup l_1) =\emptyset $.  Then we have $ \sharp J(u'')=\sharp J(u) -1$. By an induction on $ \sharp J(u) $, we can construct a map $ v $ with $ \sharp J(v)=0 $.
	
	Next we prove the following.
	\begin{claim}
		There exists  a map $ w:S^1\times[0,1]\to \Sigma $ such that $ w(t,0)=l_0,\ w(t,1)=l_1 $ and  $ \im(w)\sub \overline{\Sigma}_k  $ for some $ k\in\{1,2,3\} $.
	\end{claim}  
	\begin{proof}
		Observe that for every $ i\in I(v) $,  $ \gamma_i $ is homotopic to $ S^1\times \{0 \} $. Additionally,  the degree of the map $ u\circ \gamma_i:S^1\to l_j\ (j=0,1) $ is $ \pm 1 $. Indeed, if  $ |\deg(u \circ \gamma_i)| \ge2 $, we can prove that $ \gamma_i $ intersects itself. In addition,  if the degree is zero,  $ u\circ \gamma_i $ is contractible. This contradicts to the assumption that $ l_0 $ is non-contractible, since $ l_0 $ is homotopic to $ u\circ \gamma_i $.  
		Now, we may assume that $ \{\gamma_i\} $ is ordered in such a way that $ D_i\sub D_j  $ holds wherever $ i<j $, where $ D_i $ stands for the annulus enclosed by $ \gamma_i  $ and $ S^1\times \{0\} $. Since $ u(\gamma_0)= l_0 $ and $ u(\gamma_N)= l_1 $, there is an $ i\in I(v) $ such that $ u(\gamma_i)=l_0 $ and $ u(\gamma_{i+1})=l_1 $. The restriction of $ u $ to the annulus enclosed by $ \gamma_{i} $ and $ \gamma_{i+1} $ gives the map $ w $ in the claim. 
	\end{proof}
	
	The lemma easily follows from the above claim.  Recall that
	\begin{align*}
	\pi_1(\overline{\Sigma}_k) = \langle a_1,\cdots , a_{g_k},b_1,\cdots,b_{g_k},z_1,\cdots,z_{e_k}\mid [a_1,b_1]\cdots[a_{g_k},b_{g_k}]=z_1\cdots z_{e_k} \rangle
	\end{align*}
	holds and consequently the inclusion map
	\begin{align*}
	\langle a_1,\cdots , a_{g_k},b_1,\cdots,b_{k},z_1,\cdots,z_{e_{k}-1}\rangle\to \pi_1(\overline{\Sigma}_k) 
	\end{align*}
	is an isomorphism. 
	Here, $ z_i $ corresponds to the homotopy class of each component of the boundary. We may assume that $ z_1=[l_0] $ and $ z_2=[l_1] $. By the above claim, we have  $ z_1=z_2^{\pm1} $. However, this happens only in the case $ (g_k,e_k)=(0,2) $. Therefore we conclude that $ \overline{\Sigma}_k $ is an annulus and the inclusion map gives the one desired.
\end{proof}
\begin{lem}\label{lem:construct embedding}
	Let $ l_0 $ and $ l_1 $ be disjoint and homotopic non-contractible embedded loops on $ \Sigma $ and suppose that $ R:=\mathrm{Area}(l_0,l_1)>0 $. Then there exists a symplectic embedding $ \Psi:S^1\times[0,R] \to \Sigma $ with
	\begin{align}\label{eq:boudary}
	\Psi(t,iR)&=l_i(t) \quad (i=0,1).
	\end{align}
\end{lem}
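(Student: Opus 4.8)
The idea is to promote the smooth annulus produced by Lemma~\ref{lem:smooth embedding} to a symplectic one by a Moser--type normalisation. First I would take the embedding $\Phi\colon S^{1}\times[0,1]\to\Sigma$ furnished by Lemma~\ref{lem:smooth embedding}, with $\Phi(t,i)=l_{i}(t)$, put $A=\Phi(S^{1}\times[0,1])$, and consider the closed $2$--form $\sigma:=\Phi^{*}\omega$ on $S^{1}\times[0,1]$, which is nowhere vanishing because $\Phi$ is an embedding. When $\Sigma\neq\Tbb^{2}$ the surface is $[l_{0}]$--atoroidal (Example~\ref{ex:reimann surface is atoroidal}; note $\Sigma\neq S^{2}$ since $l_{0}$ is non-contractible), so $\int_{S^{1}\times[0,1]}\sigma=\Area(l_{0},l_{1})=R>0$, and hence $\sigma$ is a positive multiple of $dp\wedge dq$ everywhere.

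The torus case needs one extra remark. Here $\Tbb^{2}\setminus(\im(l_{0})\cup\im(l_{1}))$ is a union of two open annuli whose closures $A$ and $A'$ both have boundary $\im(l_{0})\cup\im(l_{1})$; using that $\pi_{1}$ of an annulus injects into $\pi_{1}(\Tbb^{2})$ together with $[l_{0}]=[l_{1}]$, each of $A$ and $A'$ admits a diffeomorphism from $S^{1}\times[0,1]$ carrying $l_{0},l_{1}$ onto the two boundary circles with their given parametrisations. Gluing any two homotopies from $l_{0}$ to $l_{1}$ along their common boundary produces a self--map of $\Tbb^{2}$, so the possible values of $\int\Psi^{*}\omega$ differ by integer multiples of $\int_{\Tbb^{2}}\omega$; moreover the homotopies $S^{1}\times[0,1]\to A$ and $S^{1}\times[0,1]\to A'$ realise $\omega$--integrals of opposite sign whose absolute values are the $\omega$--areas of $A$ and of $A'$ (these add up to $\int_{\Tbb^{2}}\omega$). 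Consequently $R=\Area(l_{0},l_{1})$, being the least positive value of $\int\Psi^{*}\omega$, equals the $\omega$--area of exactly one of $A$ and $A'$, and parametrising by that annulus returns us to the previous situation: an embedding $\Phi\colon S^{1}\times[0,1]\to\Sigma$ with $\Phi(t,i)=l_{i}(t)$ and $\sigma=\Phi^{*}\omega$ a positive area form of total area $R$.

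It now suffices to find a diffeomorphism $\psi$ of $S^{1}\times[0,1]$ equal to the identity near the boundary with $\psi^{*}\sigma=R\,dp\wedge dq$; then $\Psi:=\Phi\circ\psi\circ L$, where $L(q,p)=(q,p/R)$ maps $S^{1}\times[0,R]$ to $S^{1}\times[0,1]$, is the desired symplectic embedding, since $L^{*}(R\,dp\wedge dq)=dp\wedge dq$ while $\psi$ and $L$ fix the two boundary circles pointwise, so that $\Psi(t,iR)=\Phi(t,i)=l_{i}(t)$. To construct $\psi$ I would first work in a collar $S^{1}\times[0,\delta)$ of each boundary circle, write both $\sigma$ and $R\,dp\wedge dq$ as $a(q,s)\,ds\wedge dq$ with $a>0$, and solve the ODE $\partial_{s}f=R/a(q,f)$ with $f(q,0)=0$: the map $(q,s)\mapsto(q,f(q,s))$ is then a diffeomorphism germ fixing the boundary circle which pulls $\sigma$ back to $R\,dp\wedge dq$ there, and a routine interpolation turns these germs into a diffeomorphism of $S^{1}\times[0,1]$ that is the identity outside the collars. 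After this adjustment $\eta:=\sigma-R\,dp\wedge dq$ is supported in the open annulus $\mathrm{int}(S^{1}\times[0,1])$ and has $\int\eta=0$, so --- $H^{2}_{c}$ of the open annulus being one--dimensional and detected by integration --- $\eta=d\beta$ for some $\beta\in\Omega^{1}_{c}(\mathrm{int}(S^{1}\times[0,1]))$. Finally, setting $\sigma_{t}=(1-t)R\,dp\wedge dq+t\,\sigma$ (a positive area form of constant total area $R$) and defining $V_{t}$ by $\iota_{V_{t}}\sigma_{t}=-\beta$, the vector field $V_{t}$ is compactly supported in the interior and the standard computation $\tfrac{d}{dt}\psi_{t}^{*}\sigma_{t}=\psi_{t}^{*}(d\iota_{V_{t}}\sigma_{t}+\eta)=0$ shows that the time--$1$ flow of $V_{t}$ is the sought $\psi$.

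I expect the main obstacle to be the torus bookkeeping in the first step --- guaranteeing that the annulus actually parametrised has $\omega$--area equal to the infimum $R=\Area(l_{0},l_{1})$ rather than the area of its complement --- which is the degree/coset argument sketched in the second paragraph. The Moser step is standard; its only delicate point is that $\psi$ must fix the boundary pointwise, which is precisely what forces the preliminary collar normalisation.
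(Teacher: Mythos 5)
Your proposal is correct and follows the same overall strategy as the paper: take the smooth annulus embedding provided by Lemma \ref{lem:smooth embedding} and correct it by a boundary-fixing diffeomorphism, using equality of total areas, so that it becomes symplectic. The difference is tactical. The paper disposes of the normalisation step in one stroke by citing Banyaga's theorem on volume forms on compact manifolds with boundary, applied to the image annulus $D$ with the two area forms $\omega|_D$ and $\Phi_*(dp\wedge dq)$; you instead prove the needed Moser-type statement by hand (collar normalisation via the ODE $\partial_s f=R/a(q,f)$, then a compactly supported primitive of the difference obtained from $H^2_c$ of the open annulus, then a compactly supported Moser flow). Your version has the merit of making explicit that the correcting diffeomorphism fixes the boundary pointwise, which is exactly what the boundary condition \eqref{eq:boudary} requires and which the paper's citation of Banyaga leaves implicit; a minor slip is that your collar map is not the identity near the boundary, only boundary-fixing, but that is all that is needed. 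You also spell out the torus bookkeeping --- that $\mathrm{Area}(l_0,l_1)$ equals the area of exactly one of the two complementary annuli and that the boundary-compatible parametrisation of that annulus has positive integral --- whereas the paper compresses this into the single assertion $R=\int\Phi^*\omega$ right after invoking Lemma \ref{lem:smooth embedding}. Both treatments are standard; yours is self-contained and more careful at these two points, the paper's is shorter.
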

\begin{proof}
	By Lemma \ref{lem:smooth embedding}, there exists a smooth embedding $ \Phi: S^1\times [0,R] \to \Sigma  $ with $ \Phi(t,iR)=l_i(t) \ (i=0,1) $ and $ R=\int \Phi^*\omega $.	
	We denote by $ D $ the image of $ \Phi $. The two-dimensional compact manifold $ D $ has two symplectic forms: $ \omega_0=\omega|_D $ and $ \omega_1=\Phi_*(dp\wedge dq) $, where $ dp\wedge dq  $ is a symplectic form on $ S^1\times[0,R] $ with respect to the coordinate chart $ (q,p) $.
	Since $ \Sigma $ is a two-dimensional manifold, $ \omega_0 $ and $ \omega_1 $ are considered as volume forms. By Banyaga's theorem \cite{Ba}, for any  two volume forms $ \mu_0 $ and $ \mu_1 $ of a compact manifold $ M $ with smooth boundary that satisfies $ \int_M \mu_0=\int_M \mu_1 $, there exists a diffeomorphism $ \phi $ on $ M $ with $ \phi^*\mu_0=\mu_1 $. Appliyng this to $ M=D $, we obtain a diffeomorphism $ \phi:D\to D $ with $ \phi^*\omega_0=\omega_1 $. Then $ (\phi\circ\Phi)^*(\omega_0)=dp\wedge dq $, which means $\Psi:= \phi\circ\Phi: S^1\times [0,R] \to \Sigma $ is a symplectic embedding.
\end{proof}

\begin{proof}[Proof of Theorem \ref{mainthm}.]
\textbf{(I) Case $ g\ne1 $ and $ e=0 $ :}	
	In this case,  $ \Sigma $ is $ \alpha $-atoroidal (see Example \ref{ex:reimann surface is atoroidal}). Therefore Lemma \ref{lem:construct embedding} and Theorem \ref{general statement}
	give the desired conclusion. 

\textbf{(II) Case $ e\ne0 $ :}
	In this case, the symplectic form $ \omega $ is exact and  $ \alpha $-atoroidal. However, $ \Sigma $ may violate the condition \eqref{eq:open condition}. Therefore we take a larger symplectic manifold $ \hat{\Sigma}_\rho $ containing $ \Sigma $ defined as follows (when $ \rho=\infty $, $ \hat{\Sigma}_\rho $ is usually called the completion of $ \Sigma $).
	Let $ X $ be a Liouville vector field pointing outward along $ \parbar{\Sigma} $ defined near $ \parbar{\Sigma} $. Define a 1-form $ \lambda $ on $ \parbar{\Sigma} $ by $ \lambda:=\iota_X \omega|_{\parbar{\Sigma}} $. Then $ d\lambda= \omega|_{\parbar{\Sigma}} $ and, in particular, $ \lambda $ is non-degenerate.
	Considering the flow of $ X $, we can construct a symplectomorphism between a neighborhood of $ \parbar{\Sigma} $ and $ (\parbar{\Sigma}\times (-\epsilon,0],d(e^r \pi^*\lambda)) $ such that $ X(x,r)=\frac{\partial}{\partial r} $, where we used the coordinate chart $ (x,r)\in \parbar{\Sigma}\times (-\epsilon,0] $ and $ \pi $ is the projection $ \parbar{\Sigma}\times(-\epsilon,0]\to\parbar{\Sigma}  $.  Set $ (\hat{\Sigma}_\rho,\hat{\omega}) $ as follows
	\begin{align*}
	\hat{\Sigma}_\rho&:= \overline{\Sigma}\cup_{\parbar{\Sigma}\times \{0\}} \parbar{\Sigma}\times [0,\rho),\\
	\hat{\omega}&:=\begin{cases}
	\omega&\textrm{on } {\Sigma},\\
	d(e^r\pi^*\lambda) &\textrm{on } \parbar{\Sigma}\times (-\epsilon,\rho).
	\end{cases}
	\end{align*}
	An easy calculation shows that $ \omega $ and $ d(e^r\pi^*\lambda) $ coincide on $ \parbar{\Sigma}\times (-\epsilon,0) $.
	
	Let us prove that $ \hat{\Sigma}_\rho $ satisfies the condition \eqref{eq:open condition} if $ \rho $ is sufficiently large. Suppose that $ u(-,0)=z $ and $ u(-,1)=x_1 $, where $ x_1\sub \parbar{\hat{\Sigma}}_\rho $.
	Since $ u $ goes through some component $ x_0 $ of $ \parbar{\Sigma} $,  $ u$ is homotopic to $u_0\sharp u_1 $ relative to the boundary, where $ u_0(-,0)=z,\ u_0(-,1)=x_0=u_1(-,0) ,\  u_1(-,1)=x_1 $ and $ \sharp$ stands for the concatenation of two maps. Then we have
	\begin{align*}
	\int u^*\hat{\omega}&=\int u_0^*\omega +\int  u_1^*d(e^r\pi^*\lambda),\\
	&=\int u_0^*\omega+e^\rho \int_{x_1} \pi^*\lambda  -\int_{x_0} \lambda ,\\
	&=\int u_0^*\omega+e^\rho \int_{x_0} \lambda  -\int_{x_0} \lambda, \\
	&=\int u_0^*\omega+(e^\rho-1) \langle [\lambda], \alpha\rangle.
	\end{align*}
	We used Stokes's theorem in the second equality. Since $ |\int u_0^*\omega|\le r\cdot \mathrm{vol}(\Sigma) $ and $ \langle [\lambda],\alpha\rangle $ is not zero, we have $ \lim_{\rho\to\infty}\int u^*\hat{\omega}= \pm \infty $, which proves \eqref{eq:open condition}.
	
	By the assertion above and Lemma \ref{lem:construct embedding}, we can apply Theorem \ref{general statement} to $ \hat{\Sigma}_\rho $ and $ H $ extended by zero. Then we obtain at least two 1-periodic orbits for $ H $ on $ \hat{\Sigma}_\rho $ and the number of orbits is greater than or equal to four if $ H $ is non-degenerate. Since  $ H $ is supported in $ \Sigma $, all of the orbits are contained in $ \Sigma $. 

\textbf{(III) Case $\Sigma=\Tbb^2 $ :}
	Taking into account of Moser's trick, we may assume that $ \omega $ is a standard volume form $ \omega=\mathrm{vol}(\Sigma) dp\wedge dq $, where $ (q,p) $ is a standard coordinate chart. For simplicity, we assume that $ \mathrm{vol}(\Sigma)=1 $. There seems to be two approaches: using the Floer homology on $ \Sigma $; or taking a covering of $ \Sigma $ and using the Floer homology on the covering. In the first approach, we need to consider the Floer homology for non-atoroidal symplectic manifolds, since $ \Tbb^2 $ is not $ \alpha $-atoroidal. However this Floer homology is difficult to treat due to the non-vanishing of the Novikov ring. Therefore, we adopt the second approach. The proof is  based on the following observation. For simplicity, we temporarily assume that $ l_0(t)=(rt,0) $. To use Theorem \ref{general statement}, we cut this manifold along $ l_0 $. However, this operation may make us forget such an orbit shown in the following figure.
	\begin{figure}[H]
		\centering
		\def \svgwidth{0.4\columnwidth}
		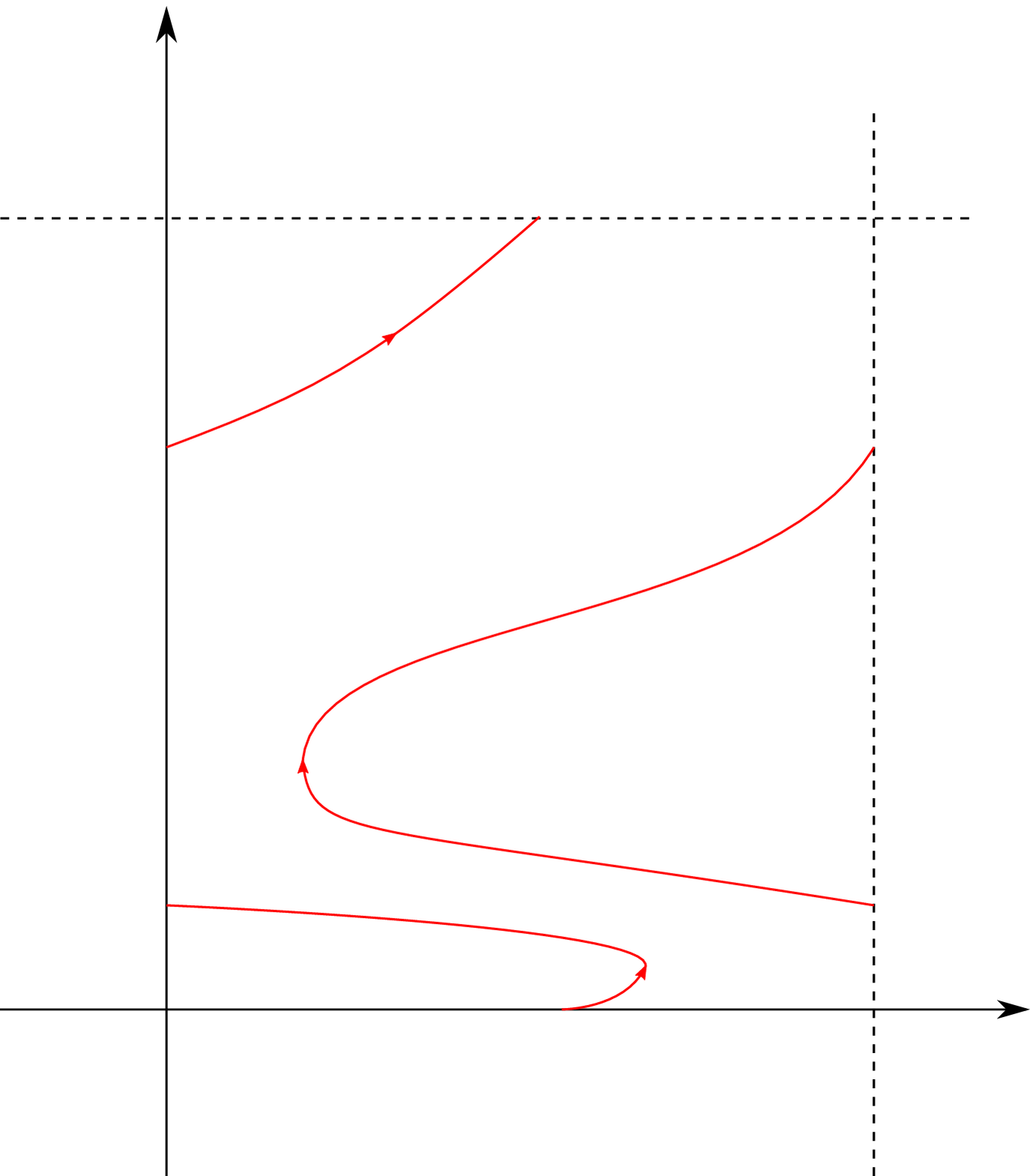
	\end{figure}
	Therefore we concatenate this annulus as many times as such an orbit can  be captured and then we use the filtered Floer homology of this long annulus.
	
	Let us state this more precisely.
	First suppose that $ [l_0] $ is represented by $ [t\mapsto (\lambda_1 t,\lambda_2 t)] $. Take a matrix $ A\in SL(2;\Zbb) $ such that 
	\begin{align*}
	A\begin{pmatrix}
	\lambda_1\\
	\lambda_2
	\end{pmatrix}
	=\begin{pmatrix}
	\lambda\\
	0
	\end{pmatrix}.
	\end{align*}
	Since $ l_0 $ is an embedded loop, $ (\lambda_1,\lambda_2) $ is a primitive element in $ \Zbb^2 $ (cf.  \cite[Proposition 1.5]{FM}) and hence $ \lambda=\pm1 $. By replacing $ A $ by $ -A $ if $ \lambda=-1 $, we may assume that $ \lambda=1 $.
	Using the coordinate chart defined by $ (q',p')^t=A(q,p)^t $, we see that we only need to consider the case $ (\lambda_1,\lambda_2)=(1,0) $.  Note that $ \alpha=(-r,0) $ in this case.

	 Next, we define the Hamiltonian function $ G^k $ on $ M=T^*\Tbb^1 $. We denote by $ \pi: M \to \Tbb^{2} $ the canonical projection $\pi(q,p)= (q,p+\Zbb) $. Put $ \tilde{\alpha}=[t\mapsto(-rt,0)] \in\pi_1(M) $.
	 \begin{df}
	 	For $ k\in \Zbb_+ =\{n\in\Zbb\mid n>0\} $, set $ \nu^k(x):=\mu(|x|-k) $ and define $ G^k\in \Hcal(M) $ by $ G^k(q,p)=\nu^k(p)\pi^*H(q,p) $, where $ \mu $ is the function given by Definition \ref{def:mu}.
	 \end{df}
	 \begin{lem}\label{lem:calc_AH2}
	 	Let $ a,b\in \Rbb $, $ a<b$. There exists $ k_{a,b}\in \Zbb_+ $ such that for every $ k\ge k_{a,b} $ and every  $ x\in P(G^k;\tilde{\alpha}) $, $ \im (x)\sub \{|p|\ge k\} $ holds whenever $ \A_{G^k}(x)\in (a,b) $.
	 \end{lem}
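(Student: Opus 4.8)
The plan is to prove the contrapositive: I will produce a threshold $k_{a,b}$ so that for every $k\ge k_{a,b}$ and every $x\in P(G^k;\tilde\alpha)$ whose image meets $\{|p|<k\}$ one has $\A_{G^k}(x)\notin(a,b)$. The two ingredients are an a priori bound on the $p$-motion of an orbit and an action estimate.

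First I would bound the oscillation of $p$. Writing $x(t)=(q(t),p(t))$ and using $\omega=dp\wedge dq$ together with $G^k=\nu^k(p)\,\pi^*H$, the Hamilton equation gives $\dot p=-\partial_qG^k(x)=-\nu^k(p(t))\,(\partial_q\pi^*H)(x(t))$, so $|\dot p|\le\|H\|_{C^1}=:L$ and hence $\sup_tp(t)-\inf_tp(t)\le L$ along any $1$-periodic orbit. Since $\tilde\alpha\ne0$ the orbit is non-constant, so $\im x$ cannot be contained in the locus $\{|p|\ge k+1\}$ on which $G^k\equiv 0$ (and $X_{G^k}\equiv 0$); combined with the oscillation bound this yields, for $k>L$, the dichotomy: an orbit meeting $\{|p|<k\}$ either (a) lies entirely in the core $\{|p|\le k\}$, or (b) lies in a shell $\{\,k-L<|p|<k+1+L\,\}$ of bounded width about the circle $\{|p|=k\}$.

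Next I would run the action estimate against the reference loop $z$ in class $\tilde\alpha$ used in the construction of the $\Sigma=\Tbb^2$ case, which is placed at the level of the highest lift of $l_0$ contained in the core, i.e.\ at height $k+O(1)$. A vertical translation of $z$ by an integer $m$ changes the action by $-rm$, and capping an orbit $x$ by first sliding $z$ to the level of $x$ and then deforming it to $x$ inside a thin slab gives $\A_{G^k}(x)=r\bigl(\text{level}(x)-\text{level}(z)\bigr)+e$ with $|e|$ bounded by a constant depending only on $H$ (here one also uses $\bigl|\int_0^1G^k_t(x(t))\,dt\bigr|\le\|H\|_{C^0}$). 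In case (a) with $\im x$ at bounded height — these are exactly the lifts of the $1$-periodic orbits of $H$ on $\Tbb^2$ in class $\alpha$, for which $\nu^k\equiv 1$ near $\im x$ so that $x$ solves the flow of $\pi^*H$ — the level drop is of order $-k$, so $\A_{G^k}(x)\to-\infty$; likewise in case (b) with $\im x$ near $\{|p|=-k\}$ the action $\to-\infty$; only orbits trapped in the shell near $\{|p|=+k\}$ can keep $\A_{G^k}(x)$ bounded. Thus for $k$ past a threshold depending only on $a,b,r,\|H\|_{C^0}$ and $\|H\|_{C^1}$, any $x$ with $\A_{G^k}(x)\in(a,b)$ must lie in that shell.

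The final step is to upgrade "lies in the shell about $\{|p|=k\}$" to $\im x\subset\{|p|\ge k\}$, i.e.\ to exclude an orbit that dips a little below the cut into the core. I would do this by a sharpened form of the same estimate, keeping track that $\text{level}(z)$ equals the cut level $k$ up to an error that is $o(1)$ as $k\to\infty$ while the error term $e$ above is explicitly bounded: an orbit whose image descended below $\{|p|=k\}$ would, on that part of its image, be a partial lift of an orbit of $H$, and this pins its action into a range that is pushed away from $(a,b)$ once $k$ is large. I expect this quantitative bookkeeping — pinning down the exact cutoff $\{|p|\ge k\}$ rather than merely $\{|p|\ge k-\text{const}\}$, and disposing of the straddling orbits — to be where the real work lies; once it is carried out, taking $k_{a,b}$ large enough gives the lemma.
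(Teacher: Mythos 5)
Two separate problems here, one with the statement you were handed and one with your argument. First, the inclusion in the printed statement is a misprint, and your proposal takes it at face value: what the paper proves and then uses is the \emph{opposite} inclusion, namely that an orbit with $\A_{G^k}(x)\in(a,b)$ satisfies $\im(x)\cap\{|p|\ge k\}=\emptyset$, i.e.\ $\im(x)\sub\{|p|<k\}$. This is visible from the line right after the lemma (``we have $\im(x)\sub\{|p|<k\}$, \dots\ $\pi\circ x$ is a 1-periodic orbit for $H$'') and from the remark that the lemma is a rewording of the preceding one, which states $[a,b]\cap\A_{G^k}(Q^k_\pm)=\emptyset$ for large $k$, where $Q^k_\pm$ is the set of orbits meeting $\{\pm p\ge k\}$. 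The paper's proof of that version is short: the oscillation bound $\mathrm{osc}(p)\le S$ (essentially your first step); the observation that for $k>S$ the vertical translation $T_l(q,p)=(q,p\pm l)$ is a bijection $Q^k_\pm\to Q^{k+l}_\pm$ shifting the action by exactly $\pm rl$; and the boundedness of the action spectrum (Theorem \ref{Oh}). Hence for $k$ large every orbit touching $\{p\ge k\}$ has action above $b$ and every orbit touching $\{p\le-k\}$ has action below $a$, so orbits with action in $(a,b)$ stay in $\{|p|<k\}$.

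Second, your argument for the literal version fails at case (a), and the gap is not repairable: orbits of $G^k$ contained in the core $\{|p|<k\}$ are \emph{not} at bounded height. Every $y\in P(H;\alpha)$ lifts to roughly $2k$ distinct orbits of $G^k$ in class $\tilde{\alpha}$, one near each integer height inside the core, and translating a lift vertically by $1$ changes its action by exactly $r$; so their actions form an arithmetic progression of gap $r$ spread over an interval of length about $2rk$. (With respect to the paper's fixed reference loop, a lift of $l_0$ at bounded height; note that your $k$-dependent reference loop at height $k+O(1)$ is not the functional $\A_{G^k}$ used in the rest of the proof, and with a moving reference the subsequent Floer-theoretic argument at the fixed window $(a',c')$ would not go through.) Consequently your conclusion that ``only orbits trapped in the shell near $\{|p|=+k\}$ can keep the action bounded'' is false: core lifts at the appropriate heights land in any prescribed window, and indeed the whole point of the application is that the nonvanishing of $HF^{[a,c)}(G^k;\tilde{\alpha})$ forces exactly such a core orbit to exist, which then projects to an orbit of $H$. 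So the inclusion $\im(x)\sub\{|p|\ge k\}$ cannot be proved (it would make the paper's own argument collapse), your case-(a) action estimate is wrong, and in addition the final ``upgrade'' step, which you yourself flag as the real work, is left unproved. The correct route is the complementary inclusion via the translation--bijection argument sketched above.
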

	 We prove this lemma by using the the following four lemmas.
	 \begin{lem}\label{osc}
	 	Let $ x=(q,p):S^1\to M $ be a 1-periodic orbit for $ G^k $. Then we have 
	 	\[ osc(p):=\sup_{s,t\in S^1}|p(s)-p(t)|\le S:=\sup_{S^1\times \Tbb^{2}}\left|\dfrac{\partial H}{\partial q}\right|.\]
	 \end{lem}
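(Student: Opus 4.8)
The plan is to read the bound directly off the $p$-component of the Hamiltonian equation. Writing $M = T^*\Tbb^1 \cong \Tbb^1\times\Rbb$ with the coordinates $(q,p)$, in which the symplectic form is $dp\wedge dq$, a $1$-periodic orbit $x=(q,p)$ of $G^k$ satisfies $\dot q = \del_p G^k(t,x(t))$ and $\dot p = -\del_q G^k(t,x(t))$, and only the second equation will be needed. Since $\nu^k(p)=\mu(|p|-k)$ depends on $p$ alone, one has $\del_q G^k(t,q,p) = \nu^k(p)\,\del_q(\pi^*H)(t,q,p)$; moreover $\pi$ is a local diffeomorphism carrying the $q$-direction to the $q$-direction, so $\del_q(\pi^*H)(t,q,p) = (\del H/\del q)(t,\pi(q,p))$.

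The next step is a pointwise bound on $\dot p$. By Definition \ref{def:mu} the function $\mu$ is non-increasing with $\mu\equiv 1$ on $(-\infty,0]$ and $\mu\equiv 0$ on $[1,\infty)$, hence $0\le\mu\le 1$ on $\Rbb$ and therefore $0\le\nu^k\le 1$ everywhere. Combined with $|(\del H/\del q)(t,\pi(q,p))|\le S$ this gives $|\dot p(t)|\le S$ for every $t\in S^1$.

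It then remains to integrate along the loop. For $s,t\in S^1 = \Rbb/\Zbb$, pick lifts $\tilde s,\tilde t\in\Rbb$ with $|\tilde s-\tilde t|\le 1/2$; then $|p(s)-p(t)| = \bigl|\int_{\tilde t}^{\tilde s}\dot p(\tau)\,d\tau\bigr|\le S\,|\tilde s-\tilde t|\le S/2\le S$, and passing to the supremum over $s,t$ yields $osc(p)\le S$. I do not expect any genuine obstacle here; the only things to be careful about are that $\nu^k$ is bounded above by $1$ (this is exactly where the shape of $\mu$ enters) and that the supremum $S$ is taken over the compact $S^1\times\Tbb^2$, on which it is attained and which controls $\del_q(\pi^*H)$ through the covering $\pi$.
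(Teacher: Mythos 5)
Your proof is correct and takes essentially the same route as the paper's: both integrate the $p$-component of the Hamiltonian equation, $\dot p=-\nu^k(p)\,\partial_q(\pi^*H)$, and bound the integrand by $S$ using $0\le\nu^k\le1$ and the definition of $S$. (Your choice of the shorter arc of $S^1$ even yields the slightly sharper bound $S/2$, which the paper does not pursue.)
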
 
	 \begin{proof}We calculate
	 	\begin{align*}
	 	|p(s)-p(t)|&=\left| \int_{t}^{s}p(\tau)d\tau\right|\\
	 	&=\left|\int_{t}^{s} \dfrac{\partial G^k}{\partial q}(q,p)d\tau\right|\\
	 	&=\left|\int_{t}^{s} \nu^k(p)\dfrac{\partial H}{\partial q}(q,p)d\tau\right|\le S.	
	 	\end{align*}
	 \end{proof}
	 \begin{lem}
	 	Put  $ Q^k_{\pm} =\{ x\in P(G^k;\tilde{\alpha})\mid \im(x)\cap \{(q,p)\in M \mid \pm p\ge k\}\neq \emptyset\}$. Then for any $ k,l\in \Zbb_+ $ with $ k>S $, we have a bijection $ T_l:Q^k_\pm \to Q^{k+l}_\pm$ defined by $ (q,p)\mapsto (q,p\pm l) $.
	 \end{lem}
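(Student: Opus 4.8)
The plan is to realise $T_l$ and its inverse through the translation $\tau_l\colon M\to M$, $\tau_l(q,p)=(q,p+l)$, which is a symplectomorphism of $(M,dp\wedge dq)$, and to control the orbits in $Q^k_\pm$ using the confinement estimate of Lemma \ref{osc} together with the hypothesis $k>S$.

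First I would record the key intertwining identity. Since $l\in\Zbb_+$ and $\pi^*H$ is $\Zbb$-periodic in $p$, we have $\pi^*H(q,p+l)=\pi^*H(q,p)$; moreover, for $p>0$ one computes $\nu^{k+l}(p+l)=\mu(|p+l|-(k+l))=\mu(p-k)=\nu^k(p)$. Hence $G^{k+l}\circ\tau_l=G^k$ on the open set $\{p>0\}$ for every $t\in S^1$, and symmetrically $G^{k+l}\circ\tau_{-l}=G^k$ on $\{p<0\}$. Consequently $\tau_l$ sends $1$-periodic orbits of $G^k$ contained in $\{p>0\}$ to $1$-periodic orbits of $G^{k+l}$ (contained in $\{p>l\}$); note also that $\tau_l$ is isotopic to the identity through the translations $\tau_s$ ($0\le s\le l$), so it preserves free homotopy classes of loops in $M$.

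Next I would check that $T_l$ is well defined. Given $x=(q,p)\in Q^k_+$, Lemma \ref{osc} gives $\mathrm{osc}(p)\le S$; since $\im(x)$ meets $\{p\ge k\}$ and $k>S$, this forces $p(t)\ge k-S>0$ for all $t$, so $\im(x)\sub\{p>0\}$. By the intertwining identity, $\tau_l\circ x=(q,p+l)$ is then a $1$-periodic orbit of $G^{k+l}$ whose image lies in $\{p\ge k+l-S>l\}$, which represents the class $\tilde\alpha$ (because $\tau_l\simeq\mathrm{id}$) and still meets $\{p\ge k+l\}$; thus $T_l(x):=\tau_l\circ x\in Q^{k+l}_+$, so $T_l\colon Q^k_+\to Q^{k+l}_+$ is well defined. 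Running the same argument with $G^{k+l}$ in place of $G^k$ (the inequality $k>S$ again shows that any orbit in $Q^{k+l}_+$ lies in $\{p\ge k+l-S>l\}$, hence maps into $\{p>0\}$ after applying $\tau_{-l}$), the assignment $T_{-l}\colon(q,p)\mapsto(q,p-l)$ sends $Q^{k+l}_+$ into $Q^k_+$, and manifestly $T_{-l}\circ T_l=\mathrm{id}$ and $T_l\circ T_{-l}=\mathrm{id}$, so $T_l$ is a bijection. The case of $Q^k_-$ is identical after the substitution $p\mapsto-p$, working with $\tau_{-l}$ and the region $\{p<0\}$.

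The only delicate point is the confinement estimate: the hypothesis $k>S$ is exactly what guarantees that every orbit in $Q^k_\pm$ stays strictly on one side of $\{p=0\}$, and this is precisely the region on which $G^k$ and the translate of $G^{k+l}$ coincide — away from $\{p=0\}$ the cutoff $\nu^k(p)=\mu(|p|-k)$ is a one-sided bump for which the identity $\nu^{k+l}(p+l)=\nu^k(p)$ holds. Everything else (the pushforward of Hamiltonian vector fields under the symplectomorphism $\tau_l$, time-dependence of $G^k$, $1$-periodicity) is a routine verification.
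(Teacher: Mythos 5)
Your proposal is correct and follows essentially the same route as the paper: confine the orbits away from $\{p=0\}$ (resp.\ $\{\pm p\le l\}$) via Lemma \ref{osc} and $k>S$, observe that the shifted cutoff satisfies $\nu^{k+l}(p\pm l)=\nu^k(p)$ on the relevant half-plane together with the $\Zbb$-periodicity of $\pi^*H$, and conclude that translation by $\pm l$ carries orbits of $G^k$ to orbits of $G^{k+l}$ with the explicit translate back giving the inverse. Your write-up is just a more detailed version of the paper's well-definedness/surjectivity argument (including the harmless extra remark that the translation preserves the class $\tilde\alpha$), so no changes are needed.
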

	 \begin{proof}
	 	We confirm that $ T_l  $ is well defined and surjective.\\
	 	\textbf{(Well-definedness).} Let $ x\in Q^k_\pm $. By Lemma \ref{osc}, $ \im(x)\cap \{\pm p\le  0\}  =\emptyset $ holds. Therefore $T_l(x)$ satisfies the Hamiltonian equation of $ G^{k+l} $, and $ T_ l(x)\in Q^{k+l}_\pm $.\\
	 	\textbf{(Surjectivity).} Let $ x\in Q^{k+l}_\pm  $. By Lemma \ref{osc}, $ \im(x)\cap \{\pm p \le l\} =\emptyset $ holds. Then $ x'=(q, p\mp l,q',p') $ is contained in $ \{ \pm p\ge0 \} $ and hence satisfies the Hamiltonian equation of $ G^k $. Therefore $ x'\in Q^k_{\pm} $ and $ T_l(x')=x $.
	 \end{proof}
	 An easy calculation shows the following lemma.
	 \begin{lem}\label{calc_AH}
	 	$ \A_{G^{k+l}}(T_l(x))=\A_{G^k}(x)\pm rl \qquad   (\forall x \in Q^k_\pm ). $
	 \end{lem}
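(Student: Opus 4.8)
The plan is to evaluate the two ingredients of the action functional $\A_H(x)=\int_0^1 H_t(x(t))\,dt-\int u^*\omega$ separately, exploiting that $T_l$ is a vertical translation by an \emph{integer} amount and that a bounding cylinder for $T_l x$ can be obtained from one for $x$ by appending an explicit straight-line homotopy in the $p$-direction. Throughout I keep the standing assumption $k>S$ from the preceding lemma, so that, by Lemma \ref{osc}, every $x=(q,p)\in Q^k_\pm$ satisfies $\pm p(t)\ge k-S>0$ for all $t$.

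First I would check that the Hamiltonian term is preserved. Since $\pm p(t)>0$ and $l\in\Zbb_+$, the numbers $p(t)$ and $p(t)\pm l$ lie on the same side of $0$, hence $|p(t)\pm l|=|p(t)|+l$ and therefore $\nu^{k+l}(p(t)\pm l)=\mu\bigl(|p(t)|+l-(k+l)\bigr)=\mu\bigl(|p(t)|-k\bigr)=\nu^k(p(t))$. Combined with the $\Zbb$-periodicity of $\pi^*H$ in the fibre variable, this gives $G^{k+l}(t,T_l x(t))=G^k(t,x(t))$ for every $t$, so $\int_0^1 G^{k+l}_t(T_l x(t))\,dt=\int_0^1 G^k_t(x(t))\,dt$.

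For the area term, fix a cylinder $u\colon S^1\times[0,1]\to M$ with $u(\cdot,0)=z$ and $u(\cdot,1)=x$, and concatenate it with $v(t,s)=(q(t),p(t)\pm sl)$, which joins $x$ to $T_l x$; then $u'=u\sharp v$ bounds $z$ and $T_l x$. A direct computation gives $v^*\omega=v^*(dp\wedge dq)=\pm l\,\dot q(t)\,ds\wedge dt$, so $\int v^*\omega=\mp rl$, the factor $\mp r$ being the winding number of the $q$-component of an orbit in class $\tilde\alpha$ and the factor $l$ the length of the vertical shift. Hence $\int u'^*\omega=\int u^*\omega\mp rl$, and subtracting this from the unchanged Hamiltonian term yields $\A_{G^{k+l}}(T_l x)=\A_{G^k}(x)\pm rl$.

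I do not expect a genuine obstacle — this is the advertised ``easy calculation'' — but the step that deserves the most care is the sign bookkeeping: one must simultaneously track the direction $\pm l$ of the shift defining $T_l$ on $Q^k_\pm$, the winding number $-r$ of the $q$-component forced by $\tilde\alpha$, and the minus sign in front of $\int u^*\omega$ in the definition of $\A$, and verify that the net sign matches the $\pm$ attached to $Q^k_\pm$ in the statement. The only other point worth recording explicitly is that the argument uses $k>S$, inherited from the previous lemma, since that is exactly what keeps orbits in $Q^k_\pm$ inside $\{\pm p>0\}$ and makes the identity $|p\pm l|=|p|+l$ valid.
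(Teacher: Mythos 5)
Your computation is correct and is precisely the ``easy calculation'' the paper leaves unwritten: the Hamiltonian term is unchanged because the shift is by an integer (so $\pi^*H$ is unaffected) and $\nu^{k+l}(p\pm l)=\nu^k(p)$ on $\{\pm p>0\}$, which Lemma \ref{osc} with $k>S$ guarantees for orbits in $Q^k_\pm$, while the appended vertical cylinder changes $\int u^*\omega$ by $\mp rl$ via the winding number $-r$ of the $q$-component, giving $\A_{G^{k+l}}(T_l(x))=\A_{G^k}(x)\pm rl$. The one point to note is that the sign of the area contribution depends on the orientation convention for the bounding cylinder, which the paper never fixes explicitly; your choice is the one consistent with the estimates and figures in Section 3, and in any case the only application (Lemma \ref{lem:calc_AH2}) merely needs the actions of $Q^k_+$ and $Q^k_-$ to drift out of any fixed window $[a,b]$ in opposite directions as $l\to\infty$, so the convention is immaterial there.
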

	 \begin{lem}
	 	For any $ a,b\in\Rbb $ with $ a<b $, there exists $ k_{a,b}\in \Zbb_+ $ such that for every $ k\ge k_{a,b} $, we have
	 	\[ [a,b]\cap \A_{G^{k}}(Q^k_\pm)=\emptyset. \]
	 \end{lem}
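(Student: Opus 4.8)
The plan is to transport everything to one fixed ``base level'' using the bijection $T_l\colon Q^k_\pm\to Q^{k+l}_\pm$ (available for $k>S$) together with the action--shift identity of Lemma \ref{calc_AH}, namely $\A_{G^{k+l}}(T_l x)=\A_{G^k}(x)\pm rl$. Fix an integer $k_0>S$. For $k\ge k_0$, writing $k=k_0+l$ with $l\ge 0$, surjectivity of $T_l$ and the shift identity give
\[ \A_{G^k}(Q^k_\pm)=\A_{G^{k_0}}(Q^{k_0}_\pm)\pm rl. \]
Hence it suffices to show that $\A_{G^{k_0}}(Q^{k_0}_+)\cup\A_{G^{k_0}}(Q^{k_0}_-)$ is a bounded subset of $\Rbb$, say contained in $[-C_0,C_0]$. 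Granting this, $\A_{G^k}(Q^k_+)\subseteq[rl-C_0,\,rl+C_0]$ and $\A_{G^k}(Q^k_-)\subseteq[-rl-C_0,\,-rl+C_0]$; since $r\ge 1$ these intervals are disjoint from $[a,b]$ whenever $rl>C_0+\max(|a|,|b|)$, so one may take $k_{a,b}:=k_0+\lceil(C_0+\max(|a|,|b|))/r\rceil+1$.

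\textbf{Confinement of the base--level orbits.} The key step is to show that every $x\in Q^{k_0}_\pm$ has image in a fixed compact set. Since $\tilde\alpha\ne 0$, no element of $P(G^{k_0};\tilde\alpha)$ is a constant loop. On the other hand $\nu^{k_0}$ and $(\nu^{k_0})'$ both vanish on $\{|p|\ge k_0+1\}$ (because $\mu\equiv 0$, and hence $\mu'\equiv 0$, on $[1,\infty)$ by Definition \ref{def:mu}), so the Hamiltonian vector field $X_{G^{k_0}}$ vanishes identically there for all times; by uniqueness of solutions a non-constant orbit can therefore never enter $\{|p|\ge k_0+1\}$. Combined with Lemma \ref{osc}, which gives $\mathrm{osc}(p)\le S$, and with the defining property that $x$ meets $\{\pm p\ge k_0\}$, this yields $\im(x)\subseteq \Tbb^1\times[k_0-S,\,k_0+1]$ when $x\in Q^{k_0}_+$, and the mirror statement for $Q^{k_0}_-$. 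Call this compact set $K$.

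\textbf{The action estimate and the main difficulty.} Since $\omega=dp\wedge dq=d(p\,dq)$ is exact on $M$ and the reference loop $z(t)=(-rt,0)$ satisfies $\int_z p\,dq=0$, Stokes' theorem yields the explicit expression $\A_{G^{k_0}}(x)=\int_0^1 G^{k_0}(x(t))\,dt-\int_0^1 p(t)\dot q(t)\,dt$ (this is the computation behind Lemma \ref{calc_AH}). For $x\in Q^{k_0}_\pm$ we have $\im(x)\subseteq K$, so the first term is bounded by $\max_{S^1\times\Tbb^2}|H|$ (using $0\le \nu^{k_0}\le 1$), and, since $\dot q=\partial_p G^{k_0}(x)$, the second is bounded by $(\max_K|p|)\cdot\int_0^1|\partial_p G^{k_0}(x(t))|\,dt\le (k_0+1)\max_K|\partial_p G^{k_0}|$, where $|\partial_p G^{k_0}|=|(\nu^{k_0})'\pi^*H+\nu^{k_0}\partial_p(\pi^*H)|$ is controlled by $H$ and $\mu$ only. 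Both bounds are independent of $x$, so $\A_{G^{k_0}}(Q^{k_0}_\pm)\subseteq[-C_0,C_0]$ for a suitable $C_0$, and the first paragraph concludes. I expect the confinement step to be the real obstacle: one must rule out orbits drifting into the ``ramp'' region $\{k_0\le|p|\le k_0+1\}$ and beyond, and this is precisely where the non-triviality of $\tilde\alpha$ (forcing non-constancy) together with the vanishing of $X_{G^{k_0}}$ past the support of the cutoff are used. Once $\im(x)$ is trapped in a fixed compact set, the remaining estimates are routine.
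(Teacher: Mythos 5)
Your proof is correct, and its skeleton --- fix a base level $k_0>S$, transport $Q^{k}_\pm$ back to level $k_0$ via the bijection $T_l$, and use the action shift of Lemma \ref{calc_AH} to push the action values out of $[a,b]$ --- is exactly the paper's. The only genuine difference lies in how the boundedness of $\A_{G^{k_0}}(Q^{k_0}_\pm)$ is obtained: the paper disposes of it in one line by invoking Theorem \ref{Oh} (boundedness of the action spectrum), whereas you prove it by hand, first confining every orbit in $Q^{k_0}_\pm$ to the compact band $k_0-S\le |p|\le k_0+1$ (non-constancy forced by $\tilde{\alpha}\ne 0$, vanishing of $X_{G^{k_0}}$ beyond the cutoff, and Lemma \ref{osc}), and then estimating $\A_{G^{k_0}}$ directly through the primitive $p\,dq$ of $\omega$. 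Your variant is longer but self-contained; it also sidesteps a small delicacy in the paper's appeal to Theorem \ref{Oh}, which is stated for closed or open convex manifolds, while the full cylinder $T^*\Tbb^1$ has infinite area and is not literally the interior of a compact symplectic manifold, so strictly one should first restrict to a finite cylinder such as $M_k$ before citing it; your confinement-plus-Stokes argument needs no such hypothesis and makes the bounding constant explicit in terms of $H$, $\mu$ and $k_0$. Conversely, the paper's route is shorter and reuses a general fact already set up. Your explicit choice of $k_{a,b}$ also makes the clause ``for every $k\ge k_{a,b}$'' transparent, which the paper's ``take $l$ sufficiently large'' leaves implicit.
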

	 \begin{proof}
	 	Let $ k>S $. By Theorem \ref{Oh}, $ \A_{H^k}(Q^k_\pm) $ is bounded.  Lemma \ref{calc_AH} implies that if we take $ l\in \Zbb_+ $ sufficiently large, then we have 
	 	\begin{align*}
	 	\inf \A_{G^{k+l}}(Q^{k+l}_+)&=\inf \A_{G^{k+l}}(T_l(Q^{k}_+))=\inf \A_{G^{k}}(Q^{k}_+)+rl >b,\\
	 	\sup \A_{G^{k+l}}(Q^{k+l}_-)&=\sup \A_{G^{k+l}}(T_l(Q^{k}_-))=\sup \A_{G^{k}}(Q^{k}_-)-rl<a.
	 	\end{align*}
	 	This proves the lemma.
	 \end{proof}
	 Now Lemma \ref{lem:calc_AH2} is nothing but a rewording of the above lemma.

	 Let us return to the proof of Theorem \ref{mainthm}. Put $ a':=m_1-1 $ and $ c':=C+S_0+1 $ ($ m_1 $ and $ S_0 $ are defined by \eqref{def:m0S0}).
	 Let $ k $ be a positive integer with $ k>k_{a',c'} $ and define an open convex symplectic manifold by $ M_k= \{ (q,p)\in T^*\Tbb^1 \mid |p|<k+2 \} $. With this notation, $ G^k $ is considered as a Hamiltonian function on $ M_k $. 
	 Let $ \Psi:S^1\times [0,R]\to \Tbb^2 $ be a symplectic embedding given by Lemma \ref{lem:construct embedding}. Take a lift $ \tilde{\Psi}:S^1\times [0,R]\to M_k $ of $ \Psi $. By taking $ k $ sufficiently large, we may assume $ \im(\tilde{\Psi})\sub M_k $ and that $ G^k $ satisfies the condition \eqref{eq:open condition} on $ M_k $. By Theorem \ref{general statement}, we obtain a 1-periodic orbit $ x\in P(G^k;\tilde{\alpha}) $ with $ a<\A_{G^k}(x)<c $. By \eqref{def:a} and \eqref{def:c}, 
	 \begin{align}\label{eq:ab estimate}
	 a'= m_1-1< a<\A_{G^k}(x)< c< C+S_0+1=c'. 
	 \end{align}
	 By this inequality and Lemma \ref{lem:calc_AH2} with $ a $ and $ b $ replaced by $ a' $ and $ c' $, we have $ \im(x)\sub\{|p|<k\} $. Since $ G^k $ is equal to $ \pi^*H $ on the domain $ \{|p|<k\} $,  $ \pi\circ x $ is a 1-periodic orbit for $ H $ in class $ \alpha $, which proves Theorem \ref{mainthm} (1).
	 
	 Now, let us move on to the proof of (3). Suppose that $ H $ is non-degenerate. The following argument was suggested by Tomohiro Asano. First, we prove $ \sharp P(H;\alpha)\ge 3 $.
 	 Here, we consider a relative grading on the filtered Floer homology induced by the Conley-Zender index $ \mu_{CZ} $. Since every map in Theorem \ref{CFH} and \ref{Pozniak} preserves the relative grading, we observe that $ \HFind{i}{G^k} $ and $ \HFind{i+1}{G^k} $ has a direct summand isomorphic to $ H_0(S^1;\Zbb/2) $ and $ H_1(S^1;\Zbb/2) $ respectively. We consider the following long exact sequence.
 	 \begin{align*}
 	 \cdots\to &\HFind{i+2}{G^k}\to HF^{\halfopen{a,c}}_{i+2}{(G^k;\tilde{\alpha})}\to \HFpind{i+2}{G^k}\to\\
 	 \to &\HFind{i+1 }{G^k}\to HF^{\halfopen{a,c}}_{i+1 }{(G^k;\tilde{\alpha})}\to \HFpind{i+1 }{G^k}\to\\
 	 \to &\HFind{i}{G^k}\to HF^{\halfopen{a,c}}_{i}{(G^k;\tilde{\alpha})}\to \HFpind{i}{G^k}\to\cdots
 	 \end{align*}
 	 By Theorem \ref{CFH} (3), we have $  HF^{\halfopen{a,c}}_{*}{(G^k;\tilde{\alpha})}=0 $. We conclude that  $ \HFpind{i+2}{G^k} $ and $ \HFpind{i+1}{G^k} $ also have a direct summand isomorphic to $ \Zbb/2 $. Therefore, we obtain at least three 1-periodic orbits $ x_i, x_{i+1}, x_{i+2}\in P(G^k;\tilde{\alpha}) $
 	 with $ \mu_{CZ}(x_j)=j\ (j=i,i+1,i+2) $. Since the projection $ \pi:M_k\to \Tbb^2 $ preserves the indices of orbits, we obtain  three distinct 1-periodic orbits $ x_{i}, x_{i+1}, x_{i+2}\in P(H;\alpha) $. 
 	 
 	 Next, we prove that $ \sharp P(H;\alpha)  $ is an even number, which implies that $ \sharp P(H;\alpha)\ge 4 $. 
 	 There is a one-to-one correspondence between the intersection of $ G=\mathrm{graph}(\phi_H^1) $ and $ \Delta $ in $ \Tbb^2 $ and 1-periodic orbits for a Hamiltonian function on $ \Tbb^2 $  by the map $ (x_0,x_0)\mapsto \{\phi_H^t(x_0) \} $, where $ \Delta $ refers to the diagonal subset in $ \Tbb^4 $. If $ \{x(t) \}\in \alpha $ then $ \tilde{x}(1)-\tilde{x}(0)=\alpha $, where $ \tilde{x} $ is a lift of $ x $ to $ \Rbb^4 $. Let $ \psi:\Tbb^4\to \Tbb^4 $ be a map defined by $ \psi(q,p,Q,P)=(q,p,Q-q,P-p) $. Then $ \psi(\Delta)=\Tbb^2\times\{0\} $ and $ \psi(G)=\{(q,p,\phi_H^1(q,p)-(q,p))\mid (q,p)\in \Tbb^2 \} $. Let $ \pi':\Tbb^2\times \Rbb^2\to \Tbb^4 $ be a projection in the third and forth variables and $ \tilde{f}_t:\Tbb^2\to \Tbb^2\times\Rbb^2 $ be the lift of the homotopy $ \{f_t:\Tbb^2\to\Tbb^4 \};\ f_t(q,p)=\psi(q,p,\phi_H^t(q,p)) $  with $ \tilde{f}_0(\Tbb^2 ) =\Tbb^2\times\{0\}$. Put $ \tilde{G}=\tilde{f}_1(\Tbb^2) $ and $ \tilde{\Delta}=\Tbb^2\times\{(\alpha_1,\alpha_2) \} $. Then $ \tilde{G}\cap \tilde{\Delta} $ corresponds to each element in $ P(H;\alpha) $. In fact, take $ w\in \tilde{G}\cap \tilde{\Delta} $. Then $ \psi^{-1}\circ\pi'(w)\in G\cap \Delta $, which means $ \psi^{-1}\circ\pi'(w)=(x_0,x_0) $ and $ \phi_H^1(x_0)=x_0 $. Put $ x(t)=\phi_H^t(x_0) $. Then $ x $ is a 1-periodic orbit for $ H $.  Since $ \tilde{f}_t(x_0)=(x_0,\tilde{x}(t)-\tilde{x}(0)) $ and $ (0,\alpha)=w-(x_0,0)=\tilde{f}_1(x_0)-\tilde{f}_0(x_0) $, we have  $ \alpha=\tilde{x}(1)-\tilde{x}(0) $, which implies that $ [x]=\alpha $.
 	 The converse is given by a similar argument.
 	 This correspondence implies   
 	 \[ \sharp \tilde{G}\cap \tilde{\Delta}=\sharp P(H;\alpha) . \]
 	 Now, since $ \tilde{G} $ is homotopic to $ \Tbb^2\times\{0\} $, the intersection number of two subsets $ \tilde{G}, \tilde{\Delta}\sub \Tbb^2\times \Rbb^2 $ is zero. In addition, since $ H $ is non-degenerate, $ \tilde{G} $ and $ \tilde{\Delta} $ intersects transversally. Therefore,  $ \sharp \tilde{G}\cap \tilde{\Delta} $ is an even number.
\end{proof}

\section{Examples}
First, we consider Remark \ref{rem:product by aspherical mfd} in  the case $ \Sigma=\Tbb^2=\Rbb^2/\Zbb^2 $, $ N=\Tbb^{2n-2} $, $ l_0(t)=(t,0) $ and $ l_1(t)=(t,R)\ (0<R<1) $. Then we have 
\[  C(\Tbb^{2n}(q,p),\{p_1=0\},\{p_1=R\},[t\mapsto(-rt,0,\cdots,0)])=rR. \]
If $ n=1 $, this contains the result of Biran, Polterovich and Salamon \cite{BPS} in the case of an annulus. If $ n\ge 2 $, $ X $ and $ Y $ are codimension 1 submanifold.
However, it seems to be natural to take $ X $ and $ Y $ to be Lagrangian tori. In this case, all arguments given in the proof of Theorem \ref{mainthm} for $ \Tbb^2 $ go through except for the construction of monotone homotopies. In fact, we can prove that the capacity is infinite. 
\begin{prp}\label{prp:cacity infinite}
	For every non-trivial homotopy class $ \alpha\in \pi_1(\Tbb^{n})\times\{0\}\subset\pi_1(\Tbb^{2n}) $ and $ w\in\Tbb^n\setminus\{0\} $,
	\[ C(\Tbb^{2n},\{p=0 \},\{p=w \};\alpha)=\infty. \]
\end{prp}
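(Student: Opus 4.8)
The plan is to prove the statement in its equivalent form: for every $c>0$ there is an $H\in\Hcal_c(\Tbb^{2n},X,Y)$ with $P(H;\alpha)=\emptyset$. Granting this, the set whose infimum defines $C(\Tbb^{2n},X,Y;\alpha)$ is empty, so the capacity is $+\infty$. Throughout, write $\alpha=(k,0)$ with $k\in\Zbb^n\setminus\{0\}$, use the coordinates $(q,p)\in\Tbb^n\times\Tbb^n=\Tbb^{2n}$ with $\omega=\sum_i dp_i\wedge dq_i$, so that $X=\{p=0\}$ and $Y=\{p=w\}$, and write $\langle b,p\rangle=\sum_i b_ip_i$. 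I would look for $H$ among the simplest time-independent candidates, namely $H(q,p)=g(\langle b,p\rangle)$ for a vector $b\in\Zbb^n\setminus\{0\}$ (so that $\langle b,p\rangle$ descends to $\Rbb/\Zbb$ and $H$ is a smooth function on $\Tbb^{2n}$) and a function $g\in\Cinf(\Rbb/\Zbb)$, both still to be chosen.

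The first step is an elementary arithmetic lemma: there exists $b\in\Zbb^n$ with $b\notin\Qbb k$ and $\langle b,w\rangle\neq0$ in $\Rbb/\Zbb$. To see this, observe that $L:=\{\,b\in\Zbb^n: \langle b,w\rangle=0\ \text{in}\ \Rbb/\Zbb\,\}$ is a subgroup of $\Zbb^n$, and $L\neq\Zbb^n$ because $w\neq0$ (test it against $b=e_i$). If every element of $\Zbb^n\setminus L$ were a rational multiple of $k$, then choosing $b_1\in\Zbb^n\setminus L$ and $b_2\in\Zbb^n\setminus\Qbb k$ — the latter exists since $\Zbb^n$ has rank $n\geq2$ — we would get $b_2\in L$ and $b_1+b_2\notin\Qbb k$, hence $b_1+b_2\in L$, hence $b_1=(b_1+b_2)-b_2\in L$, a contradiction.

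Fixing such a $b$, the points $\theta_0:=0$ and $\theta_1:=\langle b,w\rangle$ of $\Rbb/\Zbb$ are distinct, so I can choose $g\in\Cinf(\Rbb/\Zbb)$ with $g(\theta_0)=c$ and $g(\theta_1)=0$, and put $H(q,p)=g(\langle b,p\rangle)$. Since $\langle b,p\rangle\equiv\theta_0$ on $X$ and $\langle b,p\rangle\equiv\theta_1$ on $Y$, the function $H$ is time-independent with $\inf_{S^1\times X}H-\sup_{S^1\times Y}H=c$, so $H\in\Hcal_c(\Tbb^{2n},X,Y)$. To compute the orbits, note that with the convention $dH=-\omega(X_H,-)$ one gets $X_H=g'(\langle b,p\rangle)\sum_i b_i\,\partial_{q_i}$, i.e. $\dot p=0$ and $\dot q=g'(\langle b,p\rangle)\,b$ along every orbit. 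Hence each $1$-periodic orbit has the form $x(t)=(q^0+t\,g'(\langle b,p^0\rangle)b,\,p^0)$ with $g'(\langle b,p^0\rangle)\,b\in\Zbb^n$, and its free homotopy class in $\pi_1(\Tbb^{2n})=\Zbb^n\times\Zbb^n$ equals $(g'(\langle b,p^0\rangle)b,\,0)$. Reading off a coordinate $j$ with $b_j\neq0$ shows $g'(\langle b,p^0\rangle)\in\Qbb$, so this class lies in $\Qbb b\times\{0\}$; since $b\notin\Qbb k$ it is never $\alpha=(k,0)$. Therefore $P(H;\alpha)=\emptyset$, which finishes the proof.

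The construction itself is elementary once $b$ is produced, so the only real obstacle is the arithmetic lemma, and it is exactly there that the hypothesis $n\geq2$ is used: for $n=1$ no $b\notin\Qbb k$ exists, consistent with the capacity being finite in that case by Theorem \ref{mainthm}. A small technical point is that the lemma must be proved uniformly in $w$, treating $w$ with rational and with irrational coordinates on the same footing; the subgroup argument above does this. The bookkeeping of the free homotopy classes of the shear orbits also needs a careful — though routine — check.
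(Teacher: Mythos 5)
Your construction is correct and is essentially the paper's own argument: a Hamiltonian depending only on $p$ through a linear functional $\langle b,p\rangle$ (the paper uses the distance to the level set $\Gamma=\{(p-w)\cdot\beta=0\}$) generates a shear whose periodic orbits all lie in classes proportional to $(b,0)$, so choosing $b$ non-proportional to $\alpha$ and separating $\{p=0\}$ from $\{p=w\}$ by an arbitrarily large amount kills $P(H;\alpha)$. The only addition is that you prove the existence of such a $b$ via the subgroup argument (valid since $n\ge 2$, which is the setting of the proposition), whereas the paper simply asserts the choice of $\beta$.
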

\begin{proof}
	Choose $ \beta\in \Zbb^n $ with $ \beta\cdot w \neq 0 $ and  $ \alpha\neq r\beta $ for any $ r\in\Rbb $. Define $ \Gamma\sub \Tbb^{n} $ by $ \Gamma=\{p\in \Tbb^n\mid (p-w)\cdot \beta=0 \} $ and define a Hamiltonian function $ H_k\in \Hcal(\Tbb^{2n}) $ by $ H_k(q,p)=f_{0,-k,d(0,\Gamma)/2}(d(p,\Gamma)) $, where $ k\in \Zbb_+ $ and $ f $ is given by \eqref{def:f}. Then we have $ P(H_k;\alpha)=\emptyset$ and $ \inf_X H_k -\sup_Y H_k=k  $. Indeed, the non-constant solutions of the Hamiltonian equation are written as 
	\[ (q(t),p(t))=(q^0+tr\beta,p^0), \]
	for some $ r\in \Rbb$ , $ q^0\in\Tbb^n $ and $ p^0\in\Tbb^n $. By the assumption that $ \alpha\neq r\beta $, we have $ P(H_k;\alpha)=\emptyset$.
\end{proof}
As a related result, Polterovich constructed a Hamiltonian function on $ \Tbb^4 $ with a non-standard symplectic structure whose periodic orbits are all constant loops (see \cite[Example 1.2]{Pol}).

We would like to pose the following problem (in fact, this is considered as a generalization of \cite{BPS} in the case of unit cotangent bundle of a torus).
\begin{problem}
	Let $ X=\{(q,p)\mid p=0\} $, $ Y=\{(q,p)\mid p\in \Lambda \} $ and $ \alpha\in \pi_1(\Tbb^{n})\times\{0\} $, where $ \Lambda:=\{p\in\Tbb^n\in |p|=R \} \ (0<R<1/2)$. Then we have
	\[ C(\Tbb^{2n},X,Y;\alpha)=|\alpha|R=R\sqrt{\alpha_1^2+\cdots+\alpha_n^2}.  \]
	
\end{problem}
By an argument similar to the proof of Theorem \ref{mainthm}, we can confirm that there exists a 1-periodic orbit  for every  Hamiltonian function with $ \inf_{S^1\times X}H-\sup_{S^1\times Y}  H>|\alpha|R $ and $  \sup_{S^1\times M} H<|\alpha| R+\inf_{S^1\times X} H $. This fact partially supports the above statement.

\section{Basic properties of $ C(M,X,Y;\alpha) $}
In this section, we summarize basic properties of the generalized BPS capacity. 
Most of the proofs are the same as in \cite{BPS}. Assume that $ (M,\omega) $ is a connected symplectic manifold.
\begin{prp}[{Monotonicity \cite[Proposition 3.3.1]{BPS}}]
	Let $ M' $ be an open subset of $ M $ and $ X,Y\sub M' $ be compact subsets. Suppose that $ \iota_*:\pi_1(M')\to\pi_1(M) $ is injective, where $ \iota $ is the inclusion $ M'\to M $. We regard $ M' $ as a symplectic manifold with symplectic form $ \iota^*\omega $. Then 
	\begin{align*}
	C(M',X,Y;\alpha)\le C(M,\iota(X), \iota(Y);\iota_*(\alpha)).
	\end{align*} 
\end{prp}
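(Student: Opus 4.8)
The plan is to push a Hamiltonian on $M'$ forward to $M$ by extension by zero, apply the hypothesis that every sufficiently separated Hamiltonian on $M$ has an orbit in class $\iota_*(\alpha)$, and then localize the resulting orbit back inside $M'$. Write $c_0:=C(M,\iota(X),\iota(Y);\iota_*(\alpha))$. Since $\Hcal_{c'}(M,\iota(X),\iota(Y))\sub \Hcal_c(M,\iota(X),\iota(Y))$ whenever $c'\ge c$, the set of ``good'' levels defining the capacity is upward closed, so every $c>c_0$ is good. It therefore suffices to show that for every $c>c_0$ and every $H'\in\Hcal_c(M',X,Y)$ one has $P(H';\alpha)\ne\emptyset$; letting $c\downarrow c_0$ then gives $C(M',X,Y;\alpha)\le c_0$, which is the claim.

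So I would fix $c>c_0$ and $H'\in\Hcal(M')=\Cinfc(S^1\times M')$ with $\inf_{S^1\times X}H'-\sup_{S^1\times Y}H'\ge c$. Because $\supp H'$ is compact and $M'$ is open in $M$, extending $H'$ by zero yields $H\in\Cinfc(S^1\times M)=\Hcal(M)$. As $H$ restricts to $H'$ on $S^1\times M'$ and $\iota^*\omega=\omega|_{M'}$, we get $\inf_{S^1\times\iota(X)}H-\sup_{S^1\times\iota(Y)}H\ge c$, and $\iota(X),\iota(Y)$ are disjoint compact subsets of $M$ since $\iota$ is injective and continuous; hence $H\in\Hcal_c(M,\iota(X),\iota(Y))$. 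By the choice $c>c_0$ there is a $1$-periodic orbit $x\colon S^1\to M$ for $H$ with $[x]=\iota_*(\alpha)$ in $\pi_1(M)$.

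The crux, and the step I expect to be the only delicate one, is to confine $x$ to $M'$. Since $\alpha\ne 0$ and $\iota_*$ is injective, $\iota_*(\alpha)\ne 0$, so $x$ is not a constant loop; but $X_H\equiv 0$ on $M\setminus M'$, so an orbit that ever reaches the complement of $M'$ would be locally constant there, contradicting non-constancy. Hence $\im(x)\sub M'$, and because $H=H'$ and $\iota^*\omega=\omega|_{M'}$ on $M'$, the loop $x$ solves the Hamiltonian equation of $H'$ on $(M',\omega|_{M'})$, i.e. $x\in P(H';[x])$. Finally $\iota_*([x])=\iota_*(\alpha)$ together with the injectivity of $\iota_*$ forces $[x]=\alpha$ in $\pi_1(M')$, so $P(H';\alpha)\ne\emptyset$, completing the proof. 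I would emphasize that this localization is exactly what makes the hypotheses ($\iota_*$ injective, and implicitly $\alpha\ne 0$) necessary: were the orbit $x$ permitted to be constant, it could sit at a critical point of $H$ outside $M'$ and the descent to $M'$ would fail; everything else is bookkeeping about the definition of the capacity.
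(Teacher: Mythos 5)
Your argument is correct and is essentially the proof the paper intends: the paper gives no proof of its own but defers to \cite[Proposition 3.3.1]{BPS}, whose argument is exactly your extension-by-zero of $H'$ to $M$, the observation that an orbit in the non-trivial class $\iota_*(\alpha)$ cannot be constant and hence (since $X_H$ vanishes on the open complement of $\supp H'$, by uniqueness of solutions) must lie in $M'$, and the use of injectivity of $\iota_*$ to recover the class $\alpha$ in $\pi_1(M')$. The bookkeeping with the upward-closed set of good levels $c>c_0$ is also the standard reduction, so there is nothing to add.
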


\begin{lem}\label{lem:loop homotopy}
	Let $ F,G\in \Hcal(M) $ and put $ \phi_t=\phi^t_F $ and $ \psi_t=\phi^t_G $. Suppose that
	\begin{enumerate}[$ \bullet $]
		\item $ \phi_1=\psi_1 $,
		\item $ \exists z\in M$, two paths $\phi_t(z) $ and $ \psi_t(z) $ are homotopic relative to endpoints.
	\end{enumerate}
	Then for every $ x\in M $, $\phi_t(x) $ and $ \psi_t(x) $ are homotopic relative to end points.
\end{lem}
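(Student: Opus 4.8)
The plan is to observe that this is a purely topological statement about the loop of diffeomorphisms measuring the failure of $\{\phi_t\}$ and $\{\psi_t\}$ to coincide: no symplectic input is needed beyond the fact that a compactly supported Hamiltonian generates a complete flow, so that each $\phi_t,\psi_t$ is a diffeomorphism of $M$. Set $\rho_t:=\phi_t\circ\psi_t^{-1}$. Since $\phi_0=\psi_0=\mathrm{id}$ and $\phi_1=\psi_1$ by hypothesis, we have $\rho_0=\rho_1=\mathrm{id}_M$, so $(t,y)\mapsto\rho_t(y)$ is a (smooth) loop of diffeomorphisms based at the identity; for $y\in M$ write $\ell_y\colon[0,1]\to M$, $\ell_y(t)=\rho_t(y)$, a loop based at $y$.

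First I would record the elementary identity $\rho_t\bigl(\psi_t(x)\bigr)=\phi_t(x)$ and upgrade it to a homotopy. Consider $K(s,t):=\rho_{st}\bigl(\psi_t(x)\bigr)$ on $[0,1]^2$; its four sides are the path $t\mapsto\psi_t(x)$ (at $s=0$), the path $t\mapsto\phi_t(x)$ (at $s=1$), the constant $x$ (at $t=0$), and the loop $\ell_{\phi_1(x)}$ (at $t=1$, using $\psi_1=\phi_1$). Applying the standard square lemma for homotopies of paths, one gets that $t\mapsto\phi_t(x)$ and $t\mapsto\psi_t(x)$ are homotopic rel endpoints if and only if $\ell_{\phi_1(x)}$ is contractible in $\pi_1(M,\phi_1(x))$. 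In particular, the hypothesis on $z$ says precisely that $\ell_{\phi_1(z)}$ is contractible.

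It then remains to show that $\ell_y$ is contractible for every $y\in M$. Here I would use that $M$, being a connected manifold, is path-connected: given $y$, choose a path $\eta$ from $\phi_1(z)$ to $y$ and look at $L(s,u):=\rho_s(\eta(u))$, whose sides are $\ell_{\phi_1(z)}$, $\ell_y$, and $\eta$ twice (again because $\rho_0=\rho_1=\mathrm{id}$). The square lemma now gives $[\ell_y]=[\eta]^{-1}[\ell_{\phi_1(z)}][\eta]$ in the fundamental groupoid, so contractibility of $\ell_{\phi_1(z)}$ forces contractibility of $\ell_y$. Since $\phi_1$ is a diffeomorphism, hence surjective, every $y$ is of the form $\phi_1(x)$, and feeding this back into the equivalence of the previous paragraph yields the conclusion for all $x$.

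The argument is essentially bookkeeping with two applications of the square lemma; the only points demanding care are getting the orientations right there (equivalently, the conjugation rule for the class $[\ell_y]$ as its basepoint moves along $\eta$) and being explicit about where $\phi_1=\psi_1$ and the connectedness of $M$ enter. No analytic estimates are involved.
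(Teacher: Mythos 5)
Your proof is correct, and it reaches the same underlying mechanism as the paper --- transferring a null-homotopy from the distinguished point to an arbitrary point via a two-parameter family --- but it is packaged differently. The paper works directly with the concatenated loop $t\mapsto\phi_t(y)\,\sharp\,\psi_{1-t}(y)$ based at $y$: choosing a path $\gamma$ from $x$ to $z$, the family $h(\gamma(s))$ is a free homotopy between the loops at $x$ and at $z$, and since the loop at $z$ is contractible by hypothesis, so is the loop at $x$; this needs neither invertibility of the flow maps nor the square lemma. You instead pass to the loop of diffeomorphisms $\rho_t=\phi_t\circ\psi_t^{-1}$ based at the identity, use one application of the square lemma to show that $\phi_\cdot(x)\simeq\psi_\cdot(x)$ rel endpoints exactly when the evaluation loop $\ell_{\phi_1(x)}$ is contractible, and a second application to conjugate $[\ell_y]$ to $[\ell_{\phi_1(z)}]$ along a connecting path, finishing with surjectivity of $\phi_1$. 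What your route buys is a cleaner conceptual statement (contractibility of the evaluation of a loop in $\mathrm{Diff}(M)$ at one point propagates to all points, the standard evaluation-map picture); what it costs is the extra hypotheses that each $\psi_t$ is invertible and $\phi_1$ surjective --- harmless here since Hamiltonian flows of compactly supported functions are diffeomorphisms, but the paper's argument is slightly more economical and would apply verbatim to non-invertible families. Both arguments are complete; just make sure, as you note, that the orientation conventions in the two square-lemma applications are consistent, since that is where a sign slip would silently invert the conjugation formula.
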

\begin{proof}
	Choose a path $ \gamma:S^1\to M $ such that $ \gamma(0)=x $ and $ \gamma(1)=z $. Let $y\in M $ and defined $\{ h_t(y)\} $ to be the concatenation of the two paths $ \{\phi_t(y)\}$ and $ \{\psi_{1-t}(y)\}$. Then $ \{h_t(\gamma(s))\}_{s\in[0,1]} $ gives a homotopy between two loops $ \{\phi(x)\sharp(\psi(x))^{-1}\}  $  and $ \{\phi(z)\sharp(\psi(z))^{-1}\} $. By assumption,  $ \{\phi(z)\sharp(\psi(z))^{-1}\} $ is contractible and so is $ \{\phi(x)\sharp(\psi(x))^{-1}\}  $, which completes the proof.
\end{proof}
Note that the second condition is always satisfied if $ M $ is open.

\begin{prp}[{Displacement \cite[Proposition3.3.2]{BPS}}]
	Let $ X,Y\sub M $ be compact subsets and $ \alpha\in\pi_1(M) $ a non-trivial class. Suppose that there exists a Hamiltonian function $ F\in \Hcal(M) $ such that $ \phi^1_F(X)\cap X=\emptyset $, $ P(F;\alpha)=\emptyset $ and $ \phi^t_F(z)=z\ (0\le t\le1) $ for some $ z\in M $. Then $ C(M,X,Y;\alpha)=\infty $. The same is true for $ Y $.
\end{prp}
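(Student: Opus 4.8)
The plan is to prove $C(M,X,Y;\alpha)=\infty$ by producing, for every $c>0$, a Hamiltonian $H\in\Hcal(M)$ with $\inf_{S^1\times X}H-\sup_{S^1\times Y}H\ge c$ and $P(H;\alpha)=\emptyset$; by the definition of $C(M,X,Y;\alpha)$ this forces $C(M,X,Y;\alpha)\ge c$ for all $c$. If $X\cap Y\ne\emptyset$ then $\Hcal_c(M,X,Y)=\emptyset$ for every $c>0$ and there is nothing to prove, so I would assume $X\cap Y=\emptyset$. Fixing $c>0$, I would first use compactness of $X$ and the openness of the condition $\phi_F^1(X)\cap X=\emptyset$ to choose an open neighborhood $V\supset X$ with $\phi_F^1(\overline V)\cap\overline V=\emptyset$, and after shrinking $V$ arrange that $\overline V$ is disjoint from $Y$ and from $z$.

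The Hamiltonian $H$ would be built from $F$ by adding a bump supported in $V$. Take $\rho\in\Cinfc(V)$ with $\rho\ge 0$ and $\rho$ equal to a large constant $\kappa=\kappa(c,F)$ on a smaller neighborhood of $X$; then, after a time‑reparametrization of $F$ together with a correction supported in $V$ (so that the composite flow still closes up at $t=1$), let $H$ be the Hamiltonian whose flow is $\phi_F^t$ composed with the flow of the reparametrized $\rho$. The corrections are arranged, exactly as in \cite{BPS}, so that $\phi_H^1=\phi_F^1$ while $\phi_H^t(z)=z$ for all $t$ (this uses the hypothesis $\phi_F^t(z)=z$ and $z\notin\overline V$). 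Since $\rho$ is large on $X$ and vanishes on $Y$, one gets the estimates $\inf_{S^1\times X}H\ge\kappa-\mathrm{const}(F)$ and $\sup_{S^1\times Y}H\le\mathrm{const}(F)$, so choosing $\kappa$ large enough yields $\inf_{S^1\times X}H-\sup_{S^1\times Y}H\ge c$. To see $P(H;\alpha)=\emptyset$, apply Lemma \ref{lem:loop homotopy} with $\phi_t=\phi_H^t$ and $\psi_t=\phi_F^t$: as $\phi_H^1=\phi_F^1$ and both paths $\phi_H^t(z),\phi_F^t(z)$ are the constant path at $z$, the lemma gives that $\phi_H^t(x)$ and $\phi_F^t(x)$ are homotopic with fixed endpoints for every $x$. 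Hence if $x$ were a $1$-periodic orbit of $H$ in class $\alpha$, then $x(0)$ is a fixed point of $\phi_F^1$ and the loops $\{\phi_H^t(x(0))\}$, $\{\phi_F^t(x(0))\}$ are freely homotopic, so $\{\phi_F^t(x(0))\}$ is a $1$-periodic orbit of $F$ in class $\alpha$, contradicting $P(F;\alpha)=\emptyset$. The supplementary point — that the candidate periodic orbits of $H$ are only those just analyzed, and not orbits trapped in the region where $\rho$ varies — is where $\phi_F^1(\overline V)\cap\overline V=\emptyset$ enters: an orbit meeting $\overline V$ is pushed off $\overline V$ by $\phi_F^1$ and cannot return, and this is handled as in \cite[Proposition 3.3.2]{BPS}.

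For the last sentence of the statement, the construction for $Y$ is the mirror image: replace the large positive bump near $X$ by a large negative constant on a neighborhood of $Y$ (now $\phi_F^1$ displaces $Y$), so that $\sup_{S^1\times Y}H$ becomes arbitrarily negative while $\inf_{S^1\times X}H$ stays bounded below. I expect the main obstacle to be reconciling the two demands on $H$: the imbalance condition forces $H$ to be large on $X$ at \emph{every} time $t\in S^1$, yet a large bump near $X$ would by itself generate $1$-periodic orbits in class $\alpha$ as soon as $X$ has nearby non-contractible loops; it is exactly the displaceability of $X$ by $F$ that rules these out, and making this precise — together with the bookkeeping that keeps $\phi_H^1=\phi_F^1$ and lets Lemma \ref{lem:loop homotopy} apply even when $M$ is closed — is the technical core, carried out as in \cite{BPS}.
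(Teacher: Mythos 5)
The decisive step in your plan --- that after a time-reparametrization and a ``correction supported in $V$'' one can produce $H$ with $\phi_H^1=\phi_F^1$, $\phi_H^t(z)=z$, and still $\inf_{S^1\times X}H-\sup_{S^1\times Y}H\ge c$ --- is not justified, is not what \cite{BPS} does, and is where the argument breaks. Composing $\phi_F^t$ with the flow of a (reparametrized) bump $\rho$ gives time-one map $\phi_F^1\circ\phi_\rho^{b(1)}$, and the extra factor $\phi_\rho^{b(1)}$ is a nontrivial twist supported in the collar $\{0<\rho<\kappa\}$. To cancel it within the loop of time you must either use a generator that is roughly $-\kappa$ on a neighborhood of $X$ during some sub-interval of $[0,1]$ (e.g.\ $-\rho$), which destroys the required bound because $\inf_{S^1\times X}H$ is an infimum over \emph{all} $t$, or use a Hamiltonian supported in the collar away from $X$, which is obstructed: the twist has a nonzero flux-type invariant there precisely because $\rho$ drops by $\kappa$ across the collar. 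A further warning sign that the claim cannot be right as stated: if $\phi_H^1=\phi_F^1$ were achievable with the required estimates, then $\Fix(\phi_H^1)=\Fix(\phi_F^1)$ and Lemma \ref{lem:loop homotopy} alone would exclude orbits in class $\alpha$, so the displacement hypothesis $\phi_F^1(X)\cap X=\emptyset$ would never be used --- but the proposition is false without it; your closing paragraph about orbits ``trapped where $\rho$ varies'' is in fact incompatible with $\phi_H^1=\phi_F^1$ and exposes this tension.

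The paper's actual mechanism avoids forcing $\phi_H^1=\phi_F^1$. One takes an open $U\supset X$ with $z\notin U$ and $f_1(U)\cap U=\emptyset$ (where $f_t=\phi_F^t$), a bump $G\in\Cinfc(M)$ supported in $U$ and large on $X$, and lets $H_t(y)=G(y)+F_t(g_t^{-1}(y))$, whose flow is $\psi_t=g_t\circ f_t$ with $g_t=\phi_G^t$; its time-one map is $g_1\circ f_1$, not $f_1$. Displacement enters by showing $\Fix(f_1\circ g_1)=\Fix(f_1)\subset M\setminus U$, where $g_t$ is the identity, so the orbits of $h_t=f_t\circ g_t$ through these fixed points are literally $F$-orbits and hence not in class $\alpha$. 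Lemma \ref{lem:loop homotopy} is then applied not to the pair $(\phi_H^t,\phi_F^t)$ but to $\phi_t=(f_1)^{-1}\circ h_t\circ f_1$ and $\psi_t$, which genuinely share the time-one map $g_1\circ f_1$ and both fix $z$; this transfers the homotopy-class computation to the $1$-periodic orbits of $H$ and yields $P(H;\alpha)=\emptyset$ with $\inf_{S^1\times X}H-\sup_{S^1\times Y}H\ge c$. If you rework your proof along these lines (replacing the unachievable identity $\phi_H^1=\phi_F^1$ by this conjugation trick), the rest of your outline, including the mirror construction for $Y$, goes through. As a side remark, your reduction ``if $X\cap Y\ne\emptyset$ there is nothing to prove'' reads the definition backwards: emptiness of $\Hcal_c(M,X,Y)$ makes the defining condition vacuously true and pushes the infimum down, not up.
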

\begin{proof}
	We only prove in the case of  $ X $ (the proof for $ Y $ is the same).  Fix $ c>0 $ and take an open neighborhood $ U $ of $ X $ with $ z\notin U $ and $ \phi^1_F(U)\cap U= \emptyset $.
	Choose $ G\in\Cinfc(M) $ such that  
	\begin{align*}
	\supp(G)\sub U, \quad \inf_X G +\inf_{S^1\times X} F-\sup_{S^1\times Y} F\ge c.
	\end{align*}
	Put $ f_t=\phi_F^t,\ g_t=\phi_G^t $ and $ h_t=f_t\circ g_t $. Since $ f_1(U)\cap U= \emptyset $, for every $ x\in \Fix(f_1) $, we have $ x\notin U $ and $ [\{h_t(x)\}]= [\{f_t(x)\}]\ne \alpha $. In addition, $ \Fix(h_1)=\Fix(f_1) $ holds. Set $ \phi_t=(f_1)^{-1}\circ h_t\circ f_1 $ and $ \psi_t=g_t\circ f_t $. Note that $ \phi_1=g_1 \circ f_1=\psi_1 $ and that there is a one-to-one correspondence between $ \Fix(h_1) $ and $ \Fix(\phi_1) $ given by $ x\mapsto (f_1)^{-1}(x) $.  Since $ \phi_t(z)=z=\psi_t(z) $, by Lemma \ref{lem:loop homotopy}, $ \{\phi_t(x)\} $ and $\{ \psi_t(x)\} $ have the same homotopy type for every $ x\in \Fix(\psi_1) $. Therefore $ [\{\psi_t(x)\}]=[\{\phi_t(x)\}]=[\{h_t(x)\}]\ne \alpha $ for every $ x\in\Fix(\psi_1) $. Since $ \psi_t $ is the flow of the Hamiltonian function $ H_t(y)= G(y)+F_t( (g_t)^{-1}(y) ) $ and $ \inf_{S^1\times X} H - \sup_{S^1\times Y} H\ge c $ holds, we have $ C(M,X,Y;\alpha)\ge c $. 
\end{proof}

\begin{prp}[{\cite[Proposition 3.3.4.]{BPS}}]\label{prp:convergence periodic orbits}
	Let $ X,Y\sub M $ be compact subsets and $ \alpha\in\pi_1(M) $. Suppose that $ \inf_{S^1\times X}H-\sup_{S^1\times Y} H\ge C(M,X,Y;\alpha) $. Then $ P(H;\alpha)\ne \emptyset $.
\end{prp}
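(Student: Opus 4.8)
The plan is to reduce the asserted (non-strict) inequality to the strict one, where the definition of $C(M,X,Y;\alpha)$ directly produces periodic orbits, and then to extract a limiting orbit for $H$ itself. If $\alpha$ is the trivial class the conclusion is classical (a constant loop supported off $\supp H$ when $M$ is noncompact; Floer theory for contractible orbits when $M$ is closed), so we assume $\alpha\ne0$; and if $C:=C(M,X,Y;\alpha)=\infty$ the hypothesis is vacuous since $H$ is bounded, so we assume $C<\infty$, noting $C\ge0$.

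First I would perturb $H$ upward near $X$. Since $X$ and $Y$ are disjoint compact sets, choose $\chi\in\Cinfc(M)$ with $0\le\chi\le1$, $\chi\equiv1$ on a neighbourhood of $X$ and $\chi\equiv0$ on a neighbourhood of $Y$, and set $H_n:=H+\tfrac1n\chi\in\Hcal(M)$. Because $\chi\equiv1$ on $X$ and $\chi\equiv0$ on $Y$ we get $\inf_{S^1\times X}H_n-\sup_{S^1\times Y}H_n=\bigl(\inf_{S^1\times X}H-\sup_{S^1\times Y}H\bigr)+\tfrac1n\ge C+\tfrac1n$. Now the set $S$ of all $c>0$ for which every $G\in\Hcal_c(M,X,Y)$ has an orbit in class $\alpha$ is upward closed (because $\Hcal_{c'}(M,X,Y)\subseteq\Hcal_c(M,X,Y)$ whenever $c'\ge c$) and has infimum $C$, so any $c_n$ with $C<c_n<C+\tfrac1n$ belongs to $S$; since $H_n\in\Hcal_{c_n}(M,X,Y)$ this yields $P(H_n;\alpha)\ne\emptyset$, and I pick $x_n\in P(H_n;\alpha)$.

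The substance of the argument is the passage to the limit. Fix a complete Riemannian metric on $M$ and a compact $K\subset M$ with $\supp H\subseteq S^1\times K$ and $\supp\chi\subseteq K$, so that every $H_n$ vanishes off $S^1\times K$. Since $H_n\to H$ in $C^\infty$, the vector fields $X_{H_n}$ converge to $X_H$ in $C^\infty$ and are uniformly bounded, say $|X_{H_n}|\le c_0$. Because $\alpha\ne0$, each $x_n$ is non-constant, and as $\dot x_n$ vanishes wherever $x_n\notin K$, the image of $x_n$ must meet $K$; the length bound $\int_0^1|\dot x_n|\,dt\le c_0$ then forces $x_n(0)$ into the compact set $\{\,y\in M:d(y,K)\le c_0\,\}$. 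Passing to a subsequence, $x_{n_k}(0)\to p$, and by continuous dependence of solutions of ordinary differential equations on the vector field and on the initial condition (applied to $X_{H_{n_k}}\to X_H$ and $x_{n_k}(0)\to p$), $x_{n_k}\to x$ in $C^\infty(S^1,M)$ with $x(t)=\phi_H^t(p)$. Letting $k\to\infty$ in $x_{n_k}(1)=x_{n_k}(0)$ gives $x(1)=x(0)$, so $x$ is a $1$-periodic orbit for $H$; and since $C^0$-close loops are freely homotopic, $[x]=[x_{n_k}]=\alpha$ for all large $k$. Hence $P(H;\alpha)\ne\emptyset$.

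I expect the one genuinely delicate step to be keeping the initial points $x_n(0)$ inside a fixed compact set when $M$ is open: this is precisely where non-triviality of $\alpha$ is used---to rule out orbits drifting off to infinity---together with the uniform compactness of the supports of the $H_n$. The remainder is the standard Arzel\`a--Ascoli / continuous-dependence package, exactly as in the proof of Proposition 3.3.4 of \cite{BPS}.
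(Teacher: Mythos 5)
Your argument is correct and is essentially the proof the paper points to: the paper gives no proof of its own but cites \cite[Proposition 3.3.4]{BPS}, whose argument is exactly this perturbation to a strict inequality (here via the cutoff $\chi$ raising $H$ near $X$), extraction of orbits $x_n\in P(H_n;\alpha)$ from the definition of the capacity, and a limiting step using that $\alpha\neq 0$ forces the orbits to meet the compact support region, so that Arzel\`a--Ascoli/continuous dependence yields a $1$-periodic orbit of $H$ in class $\alpha$. The side remarks (the vacuous case $C=\infty$, the classical case $\alpha=0$) are handled adequately, so no gap remains.
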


\section{Comparison to invariant measures}
Kawasaki defined in \cite{Ka} another relative capacity using invariant measures of a Hamiltonian flow.  We start with a review of  basic terminology. 
Let $ \mu $ be a  Borel probability measure on $ M $. We say $ \mu $ is invariant with respect to a family of homeomorphism  $ \{\phi_t\}_{t\in [0,1]}  $ if $ \mu(\phi_t^{-1}(A))=\mu(A) $ for every measurable subset $ A $ and $ t\in[0,1] $. Let $ X $ be a vector field on $ M $. Denote by $ \meas(M,X) $ the set of Borel probability measures invariant with respect to the flow $ \phi_t $ of $ X $. 
Take $ \mu\in\meas(M,X) $ and define a first homology class $ \rho(\mu, X)\in H_1(M;\Rbb) $, called the rotation vector, by 
\begin{align*}
\langle [\alpha],\rho(\mu,X)\rangle=\int_M \alpha(X)\mu,
\end{align*}
for all closed 1-forms $ \alpha $. 
\begin{df}
	Let $ A $ and $ B $ be compact subsets of $ M $ and  $ \mathfrak{l}\in H^1(M;\Rbb) $.
	We define a relative symplectic capacity $ C^P(M,A,B;\mathfrak{l},\alpha) $ by 
	\begin{align*}
	C^P(M,A,B;\mathfrak{l},\alpha):=\inf\{c>0\mid \forall H\in \Cinfc(M)\text{ with }\inf_{A} H -\sup_{B}H\ge c,\\ \exists \mu\in\meas(M,X_H)\text{ such that }|\langle \mathfrak{l},\rho(\mu, X_H)\rangle|\ge \mathfrak{l}(\alpha) \}.
	\end{align*}
	Put 
	\begin{align*}
	C^P(M,A,B;\alpha):=\sup_{\mathfrak{l}\in H^1(M;\Rbb)} C^P(M,A,B;\mathfrak{l},\alpha).
	\end{align*}
\end{df}
\begin{prp}\label{prp:CP le C}
	With the above definitions, we have
	\begin{align*}
	C^P(M,A,B;\alpha)\le C(M,A,B;\alpha).
	\end{align*}
\end{prp}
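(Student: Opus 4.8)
The plan is to reduce the statement to Proposition \ref{prp:convergence periodic orbits}: a Hamiltonian that is admissible for $C(M,A,B;\alpha)$ carries a $1$-periodic orbit in class $\alpha$, and any such orbit produces, by averaging along it, an invariant probability measure whose rotation vector is precisely the image of $\alpha$ in $H_1(M;\Rbb)$. Concretely, fix $\mathfrak{l}\in H^1(M;\Rbb)$ and a real number $c>C(M,A,B;\alpha)$, and take any $H\in\Cinfc(M)$ with $\inf_A H-\sup_B H\ge c$. Viewing $H$ as a ($t$-independent) element of $\Hcal(M)$, we have $\inf_{S^1\times A}H-\sup_{S^1\times B}H=\inf_A H-\sup_B H\ge c\ge C(M,A,B;\alpha)$, so Proposition \ref{prp:convergence periodic orbits} furnishes an orbit $x\in P(H;\alpha)$, which is non-constant since $\alpha$ is non-trivial.

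Next I would form the orbit measure $\mu_x:=x_{*}(\mathrm{Leb}_{S^1})$, i.e.\ $\int_M f\,d\mu_x=\int_0^1 f(x(t))\,dt$. Because $x(t+s)=\phi_H^s(x(t))$, periodicity of $x$ gives $\mu_x(\phi_H^{-s}(E))=\mu_x(E)$ for every Borel set $E$ and every $s$, so $\mu_x\in\meas(M,X_H)$. For any closed $1$-form $\eta$ on $M$,
\[
\langle[\eta],\rho(\mu_x,X_H)\rangle=\int_M\eta(X_H)\,d\mu_x=\int_0^1\eta_{x(t)}(\dot x(t))\,dt=\int_{S^1}x^*\eta=\langle[\eta],[x]\rangle .
\]
Since $[x]=\alpha$ in $\pi_1(M)$, the class $[x]\in H_1(M;\Rbb)$ is the image of $\alpha$; and as every class in $H^1(M;\Rbb)$ is represented by a closed $1$-form, this identifies $\rho(\mu_x,X_H)$ with the image of $\alpha$. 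Hence $\langle\mathfrak{l},\rho(\mu_x,X_H)\rangle=\mathfrak{l}(\alpha)$, so in particular $|\langle\mathfrak{l},\rho(\mu_x,X_H)\rangle|\ge\mathfrak{l}(\alpha)$. Therefore $C^P(M,A,B;\mathfrak{l},\alpha)\le c$; letting $c\downarrow C(M,A,B;\alpha)$ and then taking the supremum over $\mathfrak{l}\in H^1(M;\Rbb)$ yields $C^P(M,A,B;\alpha)\le C(M,A,B;\alpha)$.

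I do not expect a genuine obstacle: the argument is a short transfer from periodic orbits to invariant measures. The only points requiring a little care are (a) checking that a $t$-independent Hamiltonian satisfies the hypothesis of Proposition \ref{prp:convergence periodic orbits}, which is immediate because $\inf_{S^1\times A}H=\inf_A H$ and $\sup_{S^1\times B}H=\sup_B H$ in that case, and (b) the routine verification that $\mu_x$ is flow-invariant and that its rotation vector equals the homology class of the orbit — the one place where a sign or a pairing convention could go astray.
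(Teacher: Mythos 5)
Your proposal is correct and follows the same route as the paper: obtain a $1$-periodic orbit in class $\alpha$ for the admissible (time-independent) Hamiltonian, push forward Lebesgue measure along it to get an invariant measure, and identify its rotation vector with the Hurewicz image of $\alpha$. The paper's proof is exactly this, just stated more tersely.
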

\begin{proof}
	Let $ c> C(M,A,B;\alpha) $. Suppose that $ H\in \Cinfc(M) $ satisfies $ \inf_{A} H -\sup_{B}H\ge c $. Then there is a 1-periodic orbit $ x\in P(H;\alpha) $. Put $ \mu:=x_*(dt) $, that is, the  pushforward measure of Lebesgue measure on $ S^1 $ by $ x $. This is invariant with respect to $ \phi_H^t $ and $ \rho(\mu,X_H)=\bar{\alpha} $, where $ \bar{\alpha} $ is the image of $ \alpha $ by the Hurewicz homomorphism. Therefore, we have $ c\ge C^P(M,A,B;\alpha) $.
\end{proof}
Proposition \ref{prp:cacity infinite} or Example 1.2 in \cite{Pol} together with the theorem below gives an example where $ C^P(M,A,B;\alpha)=C(M,A,B;\alpha) $ does not hold.
\begin{thm}[\cite{Pol}]
	Let $ A $ be a  compact subset of a closed symplectic
	manifold $ (M,\omega) $.  Assume that $ A $ is non-displaceable, i.e., $ A\cap \phi(A)\ne\emptyset $ by any Hamiltonian diffeomorphism $ \phi\in \mathrm{Ham}(M,\omega) $ and that  there exists a symplectic isotopy $ \{\phi_t \}_{t\in[0,1]} $ such	that $ A\cap \phi_1(A)=\emptyset $. Put $ \mathfrak{l}=\mathrm{Flux}(\{\phi_t \}) $ and $ B=\phi(A) $. Then for any positive real number $ p $ and any Hamiltonian
	function $ F: M\to \Rbb $ such that $ \inf_A F-\sup_B F\ge p $, there exists a Borel probability measure measure $ \mu $ invariant with respect to $ \phi_F^t $ such that $ \supp(\mu)\sub\supp(F) $ and
	\begin{align*}
	|\langle \mathfrak{l}, \rho(\mu,X_F)\rangle| \ge p.
	\end{align*}
\end{thm}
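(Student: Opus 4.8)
This is the key dynamical input of \cite{Pol}; the plan is to reconstruct the structure of that argument. Represent $\mathfrak{l}\in H^1(M;\Rbb)$ by a closed $1$-form, still written $\mathfrak{l}$, so that $\langle\mathfrak{l},\rho(\mu,X_F)\rangle=\int_M\mathfrak{l}(X_F)\,d\mu$; recall also that the set $\meas(M,X_F)$ of $\phi_F^t$-invariant Borel probability measures supported in $\supp(F)$ is convex and weak-$*$ compact, and that $\mu\mapsto\rho(\mu,X_F)$ is weak-$*$ continuous.

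\textbf{Step 1: normalization and long recurrent arcs.} Since $\inf_A F-\sup_B F\ge p>0$, either $F>0$ on $A$ or $F<0$ on $B$. The set $B=\phi_1(A)$ is again non-displaceable (because $\phi_1$ is a symplectomorphism) and the isotopy $\{\phi_{1-t}\circ\phi_1^{-1}\}$ displaces $B$ with flux $-\mathfrak{l}$, so after replacing $(A,B,F,\mathfrak{l},\{\phi_t\})$ by $(B,A,-F,-\mathfrak{l},\{\phi_{1-t}\circ\phi_1^{-1}\})$ if necessary, we may assume $A\subset\supp(F)$. For every $T>0$ the time-$T$ map $\phi_F^T$ lies in $\mathrm{Ham}(M)$, so non-displaceability of $A$ yields $x_T\in A$ with $\phi_F^T(x_T)\in A$; the orbit arc $\gamma_T(t)=\phi_F^t(x_T)$ ($0\le t\le T$) then remains in the invariant compact set $\supp(F)$.

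\textbf{Step 2: producing an invariant measure.} Let $\mu_T$ be the time average defined by $\int h\,d\mu_T=\tfrac1T\int_0^T h(\gamma_T(t))\,dt$, and let $\mu_\infty$ be a weak-$*$ limit of the $\mu_T$ along a sequence $T_j\to\infty$; then $\mu_\infty\in\meas(M,X_F)$ with $\supp(\mu_\infty)\subset\supp(F)$. Closing $\gamma_T$ up by a path $\delta_T$ of bounded length from $\gamma_T(T)$ back to $\gamma_T(0)$ gives a loop $\bar\gamma_T$, and
\[
\langle\mathfrak{l},\rho(\mu_T,X_F)\rangle=\tfrac1T\int_{\gamma_T}\mathfrak{l}=\tfrac1T\langle\mathfrak{l},[\bar\gamma_T]\rangle+O(1/T),
\]
so by weak-$*$ continuity of $\rho$ one has $\langle\mathfrak{l},\rho(\mu_\infty,X_F)\rangle=\lim_j\tfrac1{T_j}\langle\mathfrak{l},[\bar\gamma_{T_j}]\rangle$. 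The problem is thereby reduced to choosing the recurrent arcs so that $\limsup_{T\to\infty}\tfrac1T\bigl|\langle\mathfrak{l},[\bar\gamma_T]\rangle\bigr|\ge p$.

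\textbf{Step 3: the quantitative estimate (the main obstacle).} Here $\langle\mathfrak{l},[\bar\gamma_T]\rangle$ equals the $\omega$-area swept by the loop $\bar\gamma_T$ as it is transported by the isotopy $\{\phi_s\}_{s\in[0,1]}$, and one has to combine this with the identity $\phi_1(A)=B$ and the oscillation bound $\inf_A F-\sup_B F\ge p$ to show that the points $x_T$ can be chosen so that this swept area grows at rate at least $p$. The content of \cite{Pol} is exactly this comparison: a quantitative refinement of the non-displaceability of $A$ (the recurrent arcs can be made to ``see'' the full drop of $F$ between $A$ and its displaced copy $B$) forces the $\mathfrak{l}$-drift of some invariant measure to be at least $p$, since otherwise $\phi_1$ could be corrected to a Hamiltonian diffeomorphism still displacing $A$. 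I expect this step to be the genuine difficulty: Steps 1 and 2 are soft, whereas the rate-$p$ estimate relies on the interplay between the flux of $\{\phi_t\}$, the positions of $A$ and $B=\phi_1(A)$, and the action of $F$ along long recurrent orbits, carried out in \cite{Pol}.
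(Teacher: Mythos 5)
The paper itself gives no proof of this statement: it is quoted from \cite{Pol} and used as a black box (to deduce $C^P(M,A,B;\mathfrak{l},\alpha)\le|\mathfrak{l}(\alpha)|$), so the relevant question is whether your proposal stands on its own as a proof. It does not. Steps 1 and 2 are fine as far as they go: the dichotomy from $\inf_A F-\sup_B F\ge p>0$, the invariance of $\supp(F)$ under $\phi_F^t$, the recurrent arcs obtained by applying non-displaceability to $\phi_F^T\in\mathrm{Ham}(M,\omega)$, the Krylov--Bogolyubov time averages, and the $O(1/T)$ closing-up error are all standard and correct. But Step 3, which you yourself flag as the genuine difficulty, is precisely the content of the theorem, and you supply no argument for it: nowhere in your construction do the hypotheses $\mathfrak{l}=\mathrm{Flux}(\{\phi_t\})$, $B=\phi_1(A)$, and the oscillation bound actually get used.

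Worse, the reduction you set up in Steps 1--2 is not obviously the right one. Non-displaceability only provides \emph{some} recurrent point $x_T\in A\cap\phi_F^{-T}(A)$, and nothing you prove rules out that every available choice yields loops with small $\mathfrak{l}$-winding; for instance, if $A$ contains a critical point of $F$, that single point is a legitimate choice of $x_T$ for all $T$ and produces the zero rotation vector. So the assertion ``the points $x_T$ can be chosen so that the swept area grows at rate at least $p$'' is an unproved claim at least as strong as the theorem itself, and the closing remark that otherwise ``$\phi_1$ could be corrected to a Hamiltonian diffeomorphism still displacing $A$'' only gestures at the mechanism (a flux-correction/contradiction argument in which a uniform bound on the rotation of all invariant measures in $\supp(F)$, combined with the drop of $F$ from $A$ to $B$, would contradict non-displaceability) without carrying out any of it. As written, the proposal is an outline of the soft part plus a citation for the hard part; the quantitative estimate $|\langle\mathfrak{l},\rho(\mu,X_F)\rangle|\ge p$ remains unproved, which is a genuine gap.
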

As a corollary of the above theorem, we have
\begin{align*}
C^P(M,A,B;\mathfrak{l},\alpha)\le |\mathfrak{l}(\alpha)|.
\end{align*}
Applying this inequality to $ \Tbb^{2n} $, we have $ C^P(\Tbb^{2n},\{p=0\},\{p=w\};w,\alpha)\le|\sum_{i=1}^n w_i\alpha_i| $.
This result contrasts to Proposition \ref{prp:cacity infinite}.


%


{\small (Hiroyuki Ishiguro) \textsc{Graduate School of Mathematical Sciences, the University of Tokyo, 3-8-1 Komaba, Meguro-ku, Tokyo 153-0041, Japan}\\
\textit{Email address}: \texttt{ishiguro@ms.u-tokyo.ac.jp} or \texttt{hryk1496@gmail.com}}

\end{document}